\documentclass[reqno]{amsart}
\usepackage{csquotes}
\usepackage{hyperref}
\usepackage{amsmath}
\usepackage{amssymb}
\usepackage{mathrsfs}
\usepackage{accents}
\usepackage{graphicx}

\usepackage{tabularx,colortbl}

\usepackage{tikz}
\usetikzlibrary{calc}
\usepackage{xcolor}
\usetikzlibrary{arrows.meta}
\usepackage{amsthm}
\usepackage{float}
\usepackage{enumitem}
\usepackage[english]{babel}
\usepackage[font=footnotesize, labelfont=bf]{ caption}
\usepackage{ esint }
\usepackage{ dsfont }

\newcommand{\eg}{{\it e.g.}}
\newcommand{\ie}{{\it i.e.}}

\theoremstyle{plain}
\begingroup
\newtheorem{theorem}{Theorem}[section]
\newtheorem{lemma}[theorem]{Lemma}
\newtheorem{proposition}[theorem]{Proposition}

\endgroup

\newcommand{\step}[1]{{ \underline{\itshape Step #1}}.}

\theoremstyle{definition}
\begingroup
\newtheorem{definition}[theorem]{Definition}

\endgroup
\theoremstyle{remark}
\newtheorem{remark}[theorem]{Remark}
\renewcommand{\tilde}{\widetilde}
\newcommand{\sm}{\setminus}

\renewcommand{\d}{ \mathrm{d}}

\DeclareMathOperator{\supp}{supp}

\newcommand{\pvint}{\, \mathrm{p.v.} \! \int}

\numberwithin{equation}{section}
\newcommand{\N}{\mathbb{N}}
\newcommand{\Z}{\mathbb{Z}}
\newcommand{\R}{\mathbb{R}}

\newcommand{\de}{\partial}
\newcommand{\e}{\varepsilon}
\newcommand{\x}{{\times}}
\renewcommand{\hat}{\widehat}

\newcommand{\T}{\mathbb{T}}
\newcommand{\W}{\mathcal{W}}

\usepackage{mathtools}
\mathtoolsset{showonlyrefs}

\newcommand{\myequation}{\begin{equation}}
  \newcommand{\myendequation}{\end{equation}}
  \let\[\myequation
  \let\]\myendequation

  \title[Jump discontinuities for periodic peridynamic waves]{Dissolution and nucleation of jump discontinuities for one-dimensional periodic peridynamic waves}

  \author[G. M. Coclite]{G. M. Coclite}
  \address[Giuseppe Maria Coclite]{\newline
     Dipartimento di Meccanica, Matematica e Management, Politecnico di Bari,
    Via E.~Orabona 4, I--70125 Bari, Italy.}
  \email[]{giuseppemaria.coclite@poliba.it}

  \author[S. Dipierro]{S. Dipierro}
  \address[Serena Dipierro]{\newline Department of Mathematics and Statistics,
  University of Western Australia, 35 Stirling Highway, WA6009 Crawley, Australia}
  \email[]{serena.dipierro@uwa.edu.au}

  \author[F. Maddalena]{F. Maddalena}
  \address[Francesco Maddalena]{\newline
   Dipartimento di Meccanica, Matematica e Management, Politecnico di Bari,
   Via E.~Orabona 4, I--70125 Bari, Italy.}
  \email[]{francesco.maddalena@poliba.it}

  \author[G. Orlando]{G. Orlando}
  \address[Gianluca Orlando]{\newline
   Dipartimento di Meccanica, Matematica e Management, Politecnico di Bari,
   Via E.~Orabona 4, I--70125 Bari, Italy.}
  \email[]{gianluca.orlando@poliba.it}

  \author[E. Valdinoci]{E. Valdinoci}
  \address[Enrico Valdinoci]{\newline Department of Mathematics and Statistics,
  University of Western Australia, 35 Stirling Highway, WA6009 Crawley, Australia.}
  \email[]{enrico.valdinoci@uwa.edu.au}

\subjclass[2020]{74A70, 74B10, 70G70, 35L05.}
\usepackage[
backend=biber,
style=numeric-verb,
citestyle=numeric-comp,
sorting=nty,
maxbibnames=99
]{biblatex}
\begin{document}

\begin{abstract}
    We analyze solutions to a one-dimensional periodic linear model for peridynamic waves, focusing on the setting where the natural energy space allows for spatial discontinuities. Exploiting the high-frequency asymptotics of the peridynamic dispersion relation, we construct an explicit solution whose initial displacement has a single jump discontinuity but becomes continuous at every positive time. By time reversibility, this yields a continuous initial displacement whose evolution develops a jump at any prescribed time and is continuous at every other time. We further quantify the formation of this singularity through a sharp H\"older-norm blow-up estimate obtained via a Littlewood-Paley characterization of H\"older spaces. Finally, by superposing suitably weighted solutions, we construct an evolution exhibiting jump discontinuities on a dense set of times.
\end{abstract}

\maketitle

\setcounter{tocdepth}{1}
\tableofcontents

\section{Introduction}

Understanding crack initiation and its propagation in Solid Mechanics is a constant challenge in a satisfactory rational description of the physical evolution of real materials.
A successful path towards this goal is certainly given by the variational approach to fracture, pursued in the last years, stemming from the works~\cite{FraMar98} and subsequently developed in~\cite{DMToa02, Cha03, FraLar03,DMFraToa05}.
This line of thought relies on the assumption that crack evolution is ruled by equilibrium and energy balance processes, placing the problem in the more general framework of energetic solutions~\cite{MieThe99, Mie05, MieRou15}.
Coping with the problem of dealing with discontinuities under weak notions of differentiability has stimulated the development of new fine tools in contemporary Calculus of Variations, as shown in more recent contributions (see, \eg,~\cite{GiaPon06, DMZan07, DMLaz10, CagToa11, FriSol18, CriLazOrl18, BonConIur21, CriFri26}), showing a continually growing interest in this approach over the years.
However, the dynamical nature of the creation and evolution of fracture lies out of the scope of this picture, although oscillations play a crucial role in experimentally observed behaviour of real materials.
Progresses in extending this approach to elastodynamics has been made in~\cite{DMLar11, DMLarToa16, DMLazNar16, Cap17, DMLuc17, LazNar19, DMToa19, Cap20, DMDL20, CapLucTas20, CapSap20, RivNar21, CiaDM21, Cia23, LazMolRivSol23, CapCarSap24}, though a complete mathematical theory seems, at the moment, out of reach.

An alternative approach to overcome the problem of singularities arising in Continuum Mechanics relies on freeing oneself from modelling strains through spatial derivatives of the displacement field, and instead considering difference quotients, thereby leading to nonlocal equations.
This paradigm is at the core of the peridynamics model, proposed in~\cite{Sil00}.
Among the commonly recognized advantages of these models, one can pinpoint the low regularity required for the displacement field, allowing one, at least from a computational perspective, to naturally include singularities in solutions~\cite{SilAsk05, HaBob10, BobZha15, CocFanLopMadPel20, LopPel21, LopPel22, LopPel22-2, CocCocMadPol24} and study crack nucleation and propagation~\cite{SilWecAsk10, NiaCheBob21}.
We refer to the handbook~\cite{BobFosGeuSil16} for a more recent reference covering both theory and computational applications.

The aim of this paper is to frame these problems in a rigorous setting, investigating the emergence of spatial discontinuities occurring in the simplest scenario allowed by a precise mathematical analysis.
Specifically, we consider the one-dimensional torus $\T^1$ and solutions $u \colon [0,+\infty) \times \T^1 \to \R$ to the following Cauchy problem:
\begin{equation} \label{eqintro:peridynamics}
  \begin{cases}
    \de_{tt} u(t,x) - K[u(t,\cdot)](x) = 0 \, , & \text{for } t > 0 \text{ and } x \in \T^1 \, , \\
    u(0,x) = u_0(x) \, , \quad \de_t u(0,x) = v_0(x) \, , & \text{for } x \in \T^1 \, ,
  \end{cases}
\end{equation}
where $u_0$ and $v_0$ are given initial data and $K$ is the peridynamic operator given by
\begin{equation} \label{eqintro:K}
    K[u](x) = - \pvint_{\R} \chi_\delta(y) \frac{u(x) - u(x-y)}{|y|^{1+2\alpha}} \d y \, , \quad \text{for every } x \in \T^1 \, ,
\end{equation}
where $\delta > 0$ and $\alpha \in (0,1)$ are fixed parameters and $\chi_\delta$ is supported in $[-\delta,\delta]$ and smooth in $[-\delta,\delta]$, see Section~\ref{sec:model} for details.\footnote{The simplest example of such a function $\chi_\delta$ is the indicator function of the interval $[-\delta,\delta]$. The reader may keep this example in mind throughout the paper.}
The equation considered in~\eqref{eqintro:peridynamics} is the balance of linear momentum in a periodic medium, where internal forces are expressed in terms of the displacement field $u$ through nonlocal interactions ruled by $K$.
The parameter $\delta > 0$ is the so-called peridynamic horizon, representing the spatial scale of the interaction range and measuring the size of the nonlocality.
The parameter $\alpha \in (0,1)$ is a fractional exponent, affecting the strength of the nonlocal interactions and will play a crucial role in the following analysis.

An inspection of~\eqref{eqintro:peridynamics} and~\eqref{eqintro:K} suggests the educated guess that the fractional Sobolev space $H^\alpha(\T^1)$ should be the natural choice as a functional space to look for solutions, see Subsection~\ref{subsec:energy-space}.
In particular, when $\alpha \in (0, \frac{1}{2})$, discontinuous functions $u$ can belong to $H^\alpha(\T^1)$.
Hence, the question we address is the following: Can a continuous initial datum $u_0 \in H^\alpha(\T^1)$ evolve into a solution $u(t,\cdot) \in H^\alpha(\T^1)$ exhibiting a space jump discontinuity at some time $t = t_1 > 0$?

We provide a positive answer to this question in the following theorem, proven in Section~\ref{sec:nucleation}.
Items (1)--(3) give the qualitative result, while (4) delivers its quantitative counterpart.

\begin{theorem} \label{thm:nucleation}
  Let $\alpha \in (0,\frac{1}{2})$.
  Let $t_1 > 0$ be fixed.
  There exist initial data $u_0 \in H^\alpha(\T^1)$ and $v_0 \in L^2(\T^1)$ such that $u_0$ is a continuous function and the unique solution $u \in C([0,+\infty);H^\alpha(\T^1)) \cap C^1([0,+\infty);L^2(\T^1))$ to the Cauchy problem~\eqref{eqintro:peridynamics} with initial data $u_0$ and $v_0$ satisfies the following conditions: 
  \begin{enumerate}
    \item $u(t_1,\cdot)$ is continuous for $x \neq 0$;
    \item $u(t_1,\cdot)$ has a jump discontinuity at $x = 0$;
    \item $u(t,\cdot)$ is a continuous function for every $t \neq t_1$;
    \item for every $s \in (0,\frac{\alpha}{2}]$ there exist $\varepsilon_0 > 0$ and $C > 0$ such that
  \begin{equation*}
      \frac{1}{C} |t-t_1|^{-\frac{s}{\alpha}} \leq \| u(t, \cdot) \|_{C^{0,s}(\T^1)} \leq C |t-t_1|^{-\frac{s}{\alpha}} \, , \quad \text{ for } 0 < |t-t_1| < \varepsilon_0 \, .
  \end{equation*}
    \end{enumerate}
\end{theorem}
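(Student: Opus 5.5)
The plan is to work entirely on the Fourier side, using the diagonalization of $K$ on $\T^1$: $K[e^{ikx}] = -\omega(k)^2 e^{ikx}$ with $\omega(k)^2 = \int_{\R}\chi_\delta(y)\frac{1-\cos(ky)}{|y|^{1+2\alpha}}\,\d y$. The key input is the high-frequency asymptotics of this dispersion relation. Rescaling $y = z/|k|$ gives $\omega(k)^2 = c_\alpha |k|^{2\alpha} - c_0 + O(|k|^{-N})$ for some constant $c_0$, where the remainder decays fast because $\chi_\delta$ is smooth on $[-\delta,\delta]$. At the endpoints $\pm\delta$ there are oscillatory boundary contributions, of size $O(|k|^{-1})$. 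Taking square roots, $\omega(k) = \sqrt{c_\alpha}\,|k|^\alpha + O(|k|^{-\alpha})$.

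The construction first builds, by time reversal, a solution that is discontinuous only at $t = 0$.
1. Take the sawtooth $u_0^\flat(x) = \sum_{k\neq 0} \frac{e^{ikx}}{ik}$, which has a single jump at $x=0$ and lies in $H^\alpha$ because $\alpha<\frac12$.
2. Set $v_0^\flat = 0$, so the solution is $u^\flat(t,x) = \sum_{k\neq0} \cos(\omega(k)t)\frac{e^{ikx}}{ik}$.
3. Define $u(t,\cdot) := u^\flat(t-t_1,\cdot)$. The equation is even in time, so this is again a solution. Its data at $t=0$ are $u^\flat(-t_1) = u^\flat(t_1)$ and $\partial_t u^\flat(-t_1)$, the latter lying in $L^2$ by energy conservation.
4. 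Uniqueness comes from the well-posedness in $C(H^\alpha)\cap C^1(L^2)$, assumed from earlier sections.

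With this construction, items (1)–(3) reduce to three claims about $u^\flat$:
(a) $u^\flat(0)$ has a jump only at $0$;
(b) $u^\flat(t)$ is continuous for every $t\neq 0$;
(c) the Hölder bound $\|u^\flat(t)\|_{C^{0,s}} \sim |t|^{-s/\alpha}$ holds for small $|t|$.

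For (b), write $\cos(\omega(k)t) = \cos(\sqrt{c_\alpha}|k|^\alpha t) + O(|t||k|^{-\alpha})$. The error terms give a series with coefficients $O(|k|^{-1-\alpha})$, which is absolutely summable and hence continuous, indeed Hölder. The main term is the oscillatory sum $\sum_{k\ne0} \cos(a|k|^\alpha t)\frac{e^{ikx}}{ik} = 2\sum_{k\ge1}\cos(a k^\alpha t)\frac{\sin kx}{k}$. To treat it I would use a Littlewood–Paley decomposition in dyadic blocks $k\sim 2^j$ and show the block pieces $P_j$ satisfy $\|P_j\|_{L^\infty}\lesssim \min(1, (2^{j\alpha}|t|)^{-1})$ or a similar decaying bound.

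These block bounds come from summation by parts / van der Corput estimates on $\sum_k e^{i(ak^\alpha t \pm kx)}/k$. The phase is $\phi(k) = a k^\alpha t\pm kx$ with $\phi'' \sim t k^{\alpha-2}$. The second-derivative van der Corput bound on a block of length $N=2^j$ gives $\sum_{k\sim N} e^{i\phi}\lesssim N(tN^{\alpha-2})^{1/2} + (tN^{\alpha-2})^{-1/2}$. After dividing by $N$ this yields the decay $(tN^{\alpha})^{-1/2}$ uniformly in $x$, plus a small term, valid once $N\gtrsim |t|^{-1/\alpha}$. Blocks at frequencies below $|t|^{-1/\alpha}$ are harmless: each is a trigonometric polynomial, hence smooth.

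Summing over $j$ gives a uniformly convergent tail, so $u^\flat(t)$ is continuous. For Hölder control I would invoke the Littlewood–Paley characterization $\|f\|_{C^{0,s}}\sim \sup_j 2^{js}\|P_jf\|_\infty$ for $0<s<1$ (the Zygmund space, which coincides with $C^{0,s}$ here). The low blocks up to $N_t = |t|^{-1/\alpha}$ are $O(1)$ each, giving $\sup 2^{js}\lesssim N_t^{s} = |t|^{-s/\alpha}$. For high blocks the bound is $2^{js}(|t|2^{j\alpha})^{-1/2}$, which is bounded by its value at $N_t$ precisely when $s\le \alpha/2$. This explains the restriction $s\in(0,\frac\alpha2]$ in (4), and gives the upper bound.

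For the lower bound, at $N_t/2 \le k$ smaller than $c N_t$ we have $\cos(ak^\alpha t)\ge 1/2$. Testing a block near $N_t$ at a suitable $x\sim 1/N_t$ then gives $\|P_j u^\flat(t)\|_\infty\gtrsim 1$, so $2^{js}\|P_j\|\gtrsim|t|^{-s/\alpha}$. Alternatively, one can compare values at $x=\pm 1/N_t$.

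The main obstacle is getting the uniform-in-$x$ dyadic van der Corput bound with the correct $(|t|N^\alpha)^{-1/2}$ scaling, including near the stationary point where $ak^{\alpha-1}t\approx x$. I would handle that with the stationary-phase estimate rather than first-derivative bounds, and keep the $O(|k|^{-\alpha})$ dispersion corrections as an absolutely convergent perturbation throughout.
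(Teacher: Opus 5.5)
Your proposal is correct, and its overall architecture matches the paper's:
\begin{itemize}
\item time reversal of the zero-velocity sawtooth evolution (the paper's $w(t_1-t)$, with $w$ extended evenly in time, is your $u^\flat(t-t_1)$);
\item continuity for $t\neq 0$ from oscillatory-sum bounds;
\item item (4) from the Besov characterization, using the dyadic bound $\|P_j u\|_{L^\infty}\lesssim\min\{1,|t|^{-1/2}2^{-j\alpha/2}\}$;
\item a lower bound obtained by testing the block $2^j\sim|t|^{-1/\alpha}$ at $x\sim 2^{-j}$, where both the cosine and the sine factors are at least $\frac12$.
\end{itemize}

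The genuine differences are in the technical core.

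First, you remove the dispersion correction at the level of coefficients. Since $|\omega(k)-\gamma|k|^\alpha|\le C|k|^{-\alpha}$, the difference between $u^\flat(t)$ and the model series with phase $\gamma|k|^\alpha t$ has coefficients $O(|t||k|^{-1-\alpha})$. It is therefore an absolutely convergent series with $C^{0,\alpha}$ norm $O(|t|)$, which is irrelevant both for continuity and at the scale $|t|^{-s/\alpha}$. The paper instead keeps the exact phase and sums by parts against $e^{i\rho(k)}$, $\rho=\phi-\bar\phi$. That is why it needs the derivative asymptotics of Lemma~\ref{lem:omega_prime}. Your route needs only a zeroth-order bound, which already follows from the trivial estimate $|\omega^2(k)-\gamma^2|k|^{2\alpha}|\le C$. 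So the $O(|k|^{-N})$ expansion you state is not needed; it is in any case unavailable for cutoffs that are not smooth across $\pm\delta$, such as the indicator function.

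Second, you invoke the classical discrete van der Corput second-derivative test. The paper proves the equivalent bound by hand: a sum--integral comparison for $k\ge N_0\sim t^{1/(1-\alpha)}$ (Lemma~\ref{lem:series_integral_comparison}), followed by a stationary-point splitting of the oscillatory integral (Lemma~\ref{lem:integral_exp}). Your approach is shorter and uses less about $\omega$; the paper's is self-contained.

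The ``main obstacle'' you flag at the end is not actually one. The variable $x$ enters the phase only linearly, so $f''$, and with it the second-derivative test, is uniform in $x$, stationary point included. No separate stationary-phase analysis is required.

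Two cosmetic points. On $\T^1=\R/\Z$ the modes are $e^{i2\pi kx}$, not $e^{ikx}$. Also, your first guess $\min\{1,(2^{j\alpha}|t|)^{-1}\}$ for the blocks is too optimistic. The correct rate is the $(2^{j\alpha}|t|)^{-1/2}$ you derive afterwards, and it is exactly this square root that produces the threshold $s\le\alpha/2$.
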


To explain the result, a brief comparison with the behaviour of solutions to the classical wave equation is in order.
When internal forces are modelled by local interactions, say $\Delta u$, D'Alembert's formula rules out the nucleation of discontinuities starting from continuous initial displacements.
This principle is a manifestation of the dispersion relation of the classical wave equation, mediated by the Fourier multiplier
\begin{equation*}
    (\widehat{\Delta u})_k = - 4 \pi^2 |k|^2 \hat{u}_k \, , \quad \text{for } k \in \Z \, .
\end{equation*}
In particular, the phase velocity is independent of the frequency, \ie, all modes travel with the same speed.

On the contrary, the result in Theorem~\ref{thm:nucleation} hinges on the dispersive nature of the peridynamics equation, due to the Fourier multiplier
\begin{equation*}
    (\hat{K[u]})_k = - \omega^2(k) \hat{u}_k \, , \quad \text{for } k \in \Z \, ,
\end{equation*}
where $\omega(k)$ has an explicit formula given in Lemma~\ref{lem:K_fourier} below.
The behaviour of $\omega(k)$ is substantially different from that of the dispersion relation of the Laplacian, see Figure~\ref{fig:dispersion}.
In spite of the nontrivial shape of $\omega(k)$, for the purpose of studying the creation of singularities, what truly matters is interference between high-frequency waves, governed by the behaviour of $\omega(k)$ for large $k$.
One sees (\eg~\cite{WeckSillAsk08, CocDipFanMadRomVal21, CocDipFanMadVal22, CocCocFanMad23}) that
\begin{equation*}
    \omega(k) \sim \gamma |k|^\alpha \quad \text{for } |k| \to +\infty \, ,
\end{equation*}
for some $\gamma > 0$, implying a phase velocity $\sim \frac{\omega(k)}{2 \pi |k|}$ decaying to zero at high frequencies.
This allows for modes travelling at different speeds, and, in turn, for constructive interference between high-frequency waves, which then results in the formation of a jump discontinuity.

\begin{figure}[H]
    \includegraphics[scale=0.4]{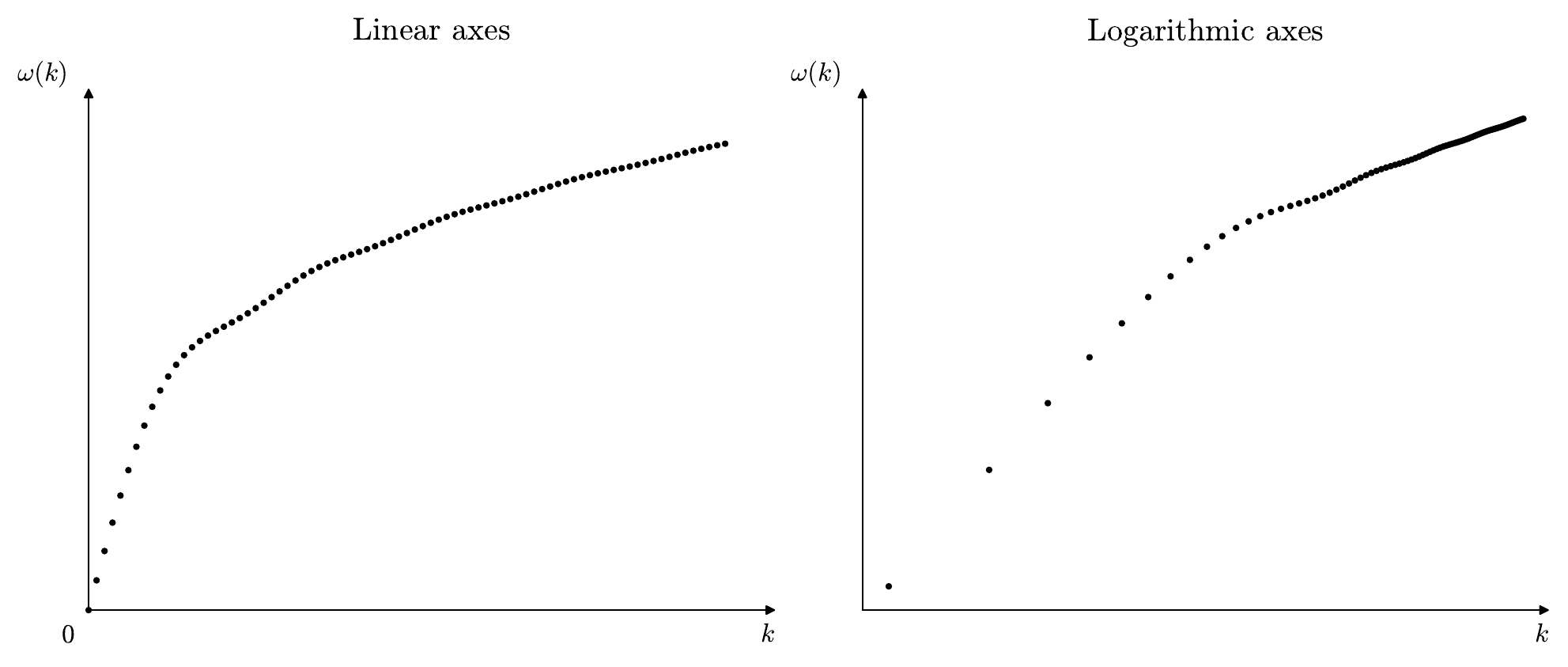}
    \caption{Plots of the dispersion relation $\omega(k)$: On the left in linear axes, on the right in logarithmic axes, depicting the power law $\omega(k) \sim \gamma |k|^\alpha$ for $k$ large. In this plot: $\chi_\delta$ is the indicator function of $[-\delta,\delta]$; $\delta = 0.06$; $\alpha = \frac{1}{4}$; frequencies plotted in the range $1 \leq k \leq 80$.}
    \label{fig:dispersion}
\end{figure}

Thanks to the time-reversal invariance of the peridynamics equation, we obtain the proof of Theorem~\ref{thm:nucleation} through an explicit example of a discontinuous initial displacement evolving into a continuous displacement at every positive time.
This example is provided in Theorem~\ref{thm:dissolution}, where we study precisely the peridynamic evolution with initial data
\[
u_0(x) = \frac{1}{2} - x \, , \quad  v_0(x) = 0 \, , \quad \text{for } x \in [0,1) \, ,
\]
illustrated in Figure~\ref{fig:simulation_zoom}.

\begin{figure}[H]
    \begin{center}
        \begin{tabular}{c c c}
            \hline \\
            $t=0.0$ & $t=0.2$ & $t=0.4$ \\
            \includegraphics[scale=1]{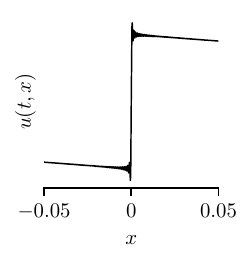} &
            \includegraphics[scale=1]{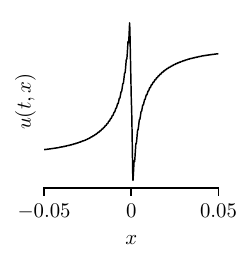} &
            \includegraphics[scale=1]{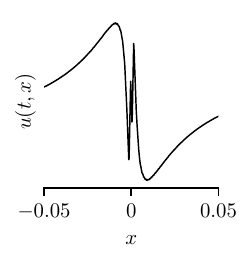} \\
            \hline \\
            $t=0.6$ & $t=0.8$ & $t=1.0$ \\
            \includegraphics[scale=1]{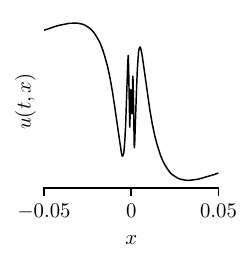} &
            \includegraphics[scale=1]{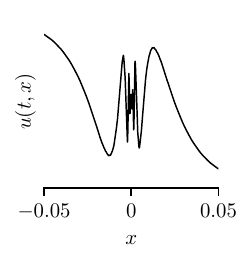} &
            \includegraphics[scale=1]{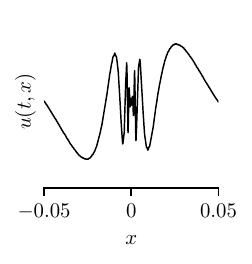} \\
            \hline \\
            $t=1.2$ & $t=1.4$ & $t=1.6$ \\
            \includegraphics[scale=1]{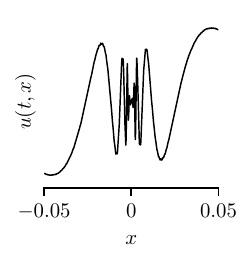} &
            \includegraphics[scale=1]{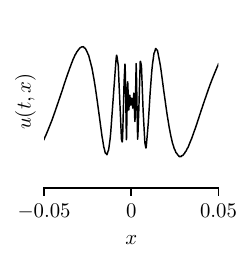} &
            \includegraphics[scale=1]{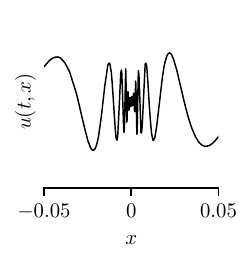} \\
            \hline
        \end{tabular}
    \end{center}
    \caption{Numerical simulation depicting the behaviour of the proposed solution close to the origin, where the jump discontinuity is dissolved. For each labeled $t$, each panel shows $u(t,\cdot)$ plotted in a neighborhood $[-0.05, 0.05]$ of the jump point $0$. In this plot: $\chi_\delta$ is the indicator function of $[-\delta,\delta]$; $\delta = 1$; $\alpha = 0.3$; the simulation is obtained by truncation of Fourier series at 1000 modes.}
    \label{fig:simulation_zoom}
\end{figure}

At the core of our analysis sit Van der Corput-type estimates for the uniform convergence of oscillatory series
\[
\sum_{k \in \Z \setminus \{0\}} \frac{1}{k} e^{i 2\pi k x + i \omega(k) t} \, ,
\]
developed in detail in Section~\ref{subsec:technical}.
These estimates also play the key role in the quantitative H\"older estimates in Theorem~\ref{thm:nucleation}-(4), emerging from precise $B^s_{\infty,\infty}$-Besov estimates.

Finally, the nucleation of a discontinuity obtained for a single time is extended to a dense set of times in Theorem~\ref{thm:dense_set}.

We conclude this introduction by mentioning that the phenomenon at the heart of our results places this paper in a broader literature interested in understanding the interplay between dispersion and spatial discontinuities of periodic solutions~\cite{BerGol88, Osk92, BerKle96, KapRod99, BerMarSch01, Olv10, ErdTzi13, dLHVeg14}.

\section{Basic notation and preliminary results}

\subsection{Basic notation}
We let $C$ denote a positive constant, which may vary from line to line.

\subsection{Fourier series}
Let $\T^1 = \R/\Z$ be the $1$-dimensional torus.
Throughout the paper, functions $u \colon \T^1 \to \R$ will be tacitly extended to periodic functions defined on $\R$.

The Fourier coefficients of a function $u \in L^1(\T^1)$ are defined by
\[
\hat{u}_k = \int_{\T^1} u(x) e^{-i 2 \pi k x} \d x \, , \quad \text{for every } k \in \Z \, .
\]

\subsection{Fractional Sobolev spaces}
Let $\alpha \in (0,1)$.
The \emph{fractional Sobolev space} $H^\alpha(\T^1)$ is the space
\[
H^\alpha(\T^1) := \Big\{ u \in L^2(\T^1) \text{ such that } \sum_{k \in \Z} (1 + |k|^2)^\alpha |\hat{u}_k|^2 < +\infty \Big\} \, .
\]
It is a Hilbert space endowed with the inner product
\[
\langle u, v \rangle_{H^\alpha(\T^1)} := \sum_{k \in \Z} (1 + |k|^2)^\alpha \hat{u}_k \overline{\hat{v}_k} \, , \quad \text{for every } u, v \in H^\alpha(\T^1) \, .
\]
We refer to~\cite{DNPalVal12} for more details about fractional Sobolev spaces.

\subsection{ H\"older space $C^{0,s}$ and Besov space $B^s_{\infty,\infty}$} \label{subsec:holder_besov}

We recall here the relation between H\"older spaces and Besov spaces, which will be useful later in the paper in Section~\ref{sec:holder}.

\subsubsection{H\"older space $C^{0,s}$}
Given $s \in (0,1)$, the space of H\"older continuous functions with exponent $s$ is defined as
\begin{equation*}
    C^{0,s}(\T^1) := \Big\{ u \in L^\infty(\T^1) \text{ such that } \sup_{ \substack{x, y \in \R \\ x \neq y} } \frac{ |u(x) - u(y)| }{ |x - y|^s } < +\infty \Big\} \, ,
\end{equation*}
endowed with the norm
\begin{equation*}
    \| u \|_{C^{0,s}(\T^1)} := \|u\|_{L^\infty(\T^1)} + [u]_{C^{0,s}(\T^1)} \, ,
\end{equation*}
where
\begin{equation*}
    [u]_{C^{0,s}(\T^1)} := \sup_{ \substack{x, y \in \R \\ x \neq y} } \frac{ |u(x) - u(y)| }{ |x - y|^s } \, .
\end{equation*}

Next, we recall  a characterization of H\"older continuity expressed in terms of the distribution of the $L^\infty$ norm across frequency scales.
We provide the details of the definition of the Besov norm, since they will be useful later in the paper in Section~\ref{sec:holder}.

\tikzset{declare function = {bump(\x) = exp(1)*exp(1/((2*\x-1)*(2*\x-1) - 1));}}

\subsubsection{Littlewood-Paley decomposition}
We introduce the Littlewood-Paley decomposition of a function, see, \eg, \cite[Chapter~VI, Paragraph~4.1]{Ste93}.
We start by fixing a partition of unity in the Fourier space.
Let $\psi_{-1} \in C^{1,1}_c(\R)$ be such that $\psi_{-1}(\xi) = 1$ for $|\xi| \leq \frac{1}{2}$ and $\psi_{-1}(\xi) = 0$ for $|\xi| \geq 1$.
In the transition region, we choose the transition function $\psi_{-1}(\xi) = e^{1+\frac{1}{(2|\xi| - 1)^2-1}}$ for $\frac{1}{2} < |\xi| < 1$.
For $j \geq 0$, set $\psi_j(\xi) := \psi_{-1}(2^{-j-1} \xi) - \psi_{-1}(2^{-j} \xi)$ and observe that $\psi_j$ is supported where $2^{j-1} \leq |\xi| \leq 2^{j+1}$, see Figure~\ref{fig:partition_of_unity}.
Moreover,
\begin{equation*}
    \psi_{-1}(\xi) + \sum_{j = 0}^{+\infty} \psi_j(\xi) = 1 \, , \quad \text{for every } \xi \in \R \, ,
\end{equation*}
where the series is a finite sum for every $\xi \in \R$.

\begin{figure}[ht]
    {\centering
        \begin{tikzpicture}[scale=1]
        \draw (0,0) -- (8,0);

        \draw (0,-0.1) -- (0, 0.1);
        \draw (0,-0.1) node[anchor=north] {$0$};

        \draw (0.5,-0.1) -- (0.5, 0.1);
        \draw (0.5,-0.1) node[anchor=north] {$\frac{1}{2}$};

        \draw (1,-0.1) -- (1, 0.1);
        \draw (1,-0.1) node[anchor=north] {$1$};

        \draw (2,-0.1) -- (2, 0.1);
        \draw (2,-0.1) node[anchor=north] {$2$};

        \draw (4,-0.1) -- (4, 0.1);
        \draw (4,-0.1) node[anchor=north] {$4$};

        \draw (8,-0.1) -- (8, 0.1);
        \draw (8,-0.1) node[anchor=north] {$8$};

        \draw[thick, domain=0:0.5, smooth, variable=\x] plot (\x, 1);
        \draw[thick, domain=0.5:1, smooth, variable=\x] plot (\x, {bump(\x)});

        \draw (0, 1.1) node[anchor=south] {$\psi_{-1}$};

        \draw[thick, domain=0.5:1, smooth, variable=\x] plot (\x, {1 - bump(\x)});
        \draw[thick, domain=1:2, smooth, variable=\x] plot (\x, {bump(1/2 * \x)});

        \draw (1, 1.1) node[anchor=south] {$\psi_{0}$};

        \draw[thick, domain=1:2, smooth, variable=\x] plot (\x, {1 - bump(1/2 * \x)});
        \draw[thick, domain=2:4, smooth, variable=\x] plot (\x, {bump(1/4 * \x)});

        \draw (2, 1.1) node[anchor=south] {$\psi_{1}$};

        \draw[thick, domain=2:4, smooth, variable=\x] plot (\x, {1 - bump(1/4 * \x)});
        \draw[thick, domain=4:8, smooth, variable=\x] plot (\x, {bump(1/8 * \x)});

        \draw (4, 1.1) node[anchor=south] {$\psi_{2}$};

    \end{tikzpicture}
    }
    \caption{Partition of unity in the Fourier space.}
    \label{fig:partition_of_unity}
\end{figure}

We deduce here a precise estimate from below for $\psi_j$ that will be useful later in Section~\ref{sec:holder}.

\begin{lemma} \label{lem:psi_lower_bound}
   Let $j \geq 0$ and $k \in \Z$.
Then
\begin{equation*}
    \psi_j(|k|) \geq \frac{1}{2} \iff \Big\lceil 2^{j-1} \Big( 1 + \Big( \frac{\log(2)}{1 + \log(2)} \Big)^\frac{1}{2} \Big) \Big\rceil \leq |k| \leq \Big\lfloor 2^{j} \Big( 1 + \Big( \frac{\log(2)}{1 + \log(2)} \Big)^\frac{1}{2} \Big) \Big\rfloor \, .
\end{equation*}
\end{lemma}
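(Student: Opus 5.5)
The lemma follows from an explicit computation with the transition profile. Write $\psi_{-1}(\xi) = \phi(|\xi|)$, where $\phi(r) = 1$ for $r \le \frac12$, $\phi(r)=0$ for $r\ge 1$, and $\phi(r) = e^{1+\frac{1}{(2r-1)^2-1}}$ on $(\frac12,1)$. Since $\psi_j(\xi) = \phi(2^{-j-1}|\xi|) - \phi(2^{-j}|\xi|)$ and the two transition regions $[2^{j},2^{j+1}]$ and $[2^{j-1},2^{j}]$ overlap only at the point $2^j$, we split into three ranges of $r := |k|$.

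For $2^{j-1} \le r \le 2^j$, the first term equals $1$, so $\psi_j(r) = 1 - \phi(2^{-j} r)$. For $2^j \le r \le 2^{j+1}$, the second term vanishes, so $\psi_j(r) = \phi(2^{-j-1} r)$. Outside $[2^{j-1},2^{j+1}]$ we have $\psi_j = 0$. Thus the condition $\psi_j \ge \frac12$ reduces to the following pair of inequalities for $\phi$ on $[\frac12,1]$:
\begin{equation*}
\phi(\rho) \le \tfrac12 \text{ with } \rho = 2^{-j} r \quad \text{(left piece)}, \qquad \phi(\rho') \ge \tfrac12 \text{ with } \rho' = 2^{-j-1} r \quad \text{(right piece)}.
\end{equation*}

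Next we solve $\phi(\rho) = \frac12$. On $(\frac12,1)$ we have $(2\rho-1)^2 - 1 < 0$, and $\rho \mapsto (2\rho-1)^2$ increases from $0$ to $1$. Hence the exponent $1 + \frac{1}{(2\rho-1)^2-1}$ decreases from $0$ to $-\infty$, so $\phi$ is strictly decreasing there. Setting the exponent equal to $-\log 2$ gives $(2\rho-1)^2 - 1 = -\frac{1}{1+\log 2}$, that is,
\begin{equation*}
(2\rho - 1)^2 = \frac{\log 2}{1+\log 2}, \qquad \rho^* = \frac12\Big(1 + \Big(\frac{\log 2}{1+\log 2}\Big)^{1/2}\Big).
\end{equation*}
By monotonicity, $\phi(\rho) \le \frac12$ if and only if $\rho \ge \rho^*$, and $\phi(\rho) \ge \frac12$ if and only if $\rho \le \rho^*$. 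The endpoints $\rho = \frac12$ and $\rho = 1$ are handled by continuity, using $\phi(\frac12)=1$ and $\phi(1)=0$.

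Translating back to $r = |k|$: the left piece gives $r \ge 2^j\rho^* = 2^{j-1}(1+c)$, and the right piece gives $r \le 2^{j+1}\rho^* = 2^j(1+c)$, where $c = \big(\frac{\log 2}{1+\log 2}\big)^{1/2}$. Both pieces together with the middle point $r = 2^j$ lie in the interval $[2^{j-1}(1+c), 2^j(1+c)]$, and $\psi_j < \frac12$ everywhere outside it. Since $k$ is an integer, this interval condition is equivalent to $\lceil 2^{j-1}(1+c)\rceil \le |k| \le \lfloor 2^j(1+c)\rfloor$, which is the claim.

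The only delicate step is the bookkeeping across the overlap point $2^j$, where both descriptions give $\psi_j = 1$. It must be checked that the two inequality regions glue into a single interval; this holds because $2^{j-1}(1+c) < 2^j < 2^j(1+c)$, as $0<c<1$.
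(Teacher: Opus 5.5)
Your proof is correct and follows essentially the same route as the paper: split at $|k| = 2^j$, reduce $\psi_j(|k|) \geq \frac12$ to $\psi_{-1}(2^{-j}|k|) \leq \frac12$ on the left piece and to $\psi_{-1}(2^{-j-1}|k|) \geq \frac12$ on the right piece, and then solve the explicit transition profile at level $\frac12$. You also use strict monotonicity of the profile on $(\frac12,1)$ and note that $\psi_j = 0$ outside $[2^{j-1},2^{j+1}]$; this makes the ``only if'' direction explicit, which the paper's proof leaves implicit in its chains of equivalences.
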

\begin{proof}
If $|k| = 2^j$ the inequality is trivial, since $\psi_j(2^j) = 1$.
We discuss the two cases obtained by splitting at $|k| = 2^j$.
Indeed, for $k \in \Z$ such that
\begin{equation*}
    2^{j-1} < 2^{j-1} \Big( 1 + \Big( \frac{\log(2)}{1 + \log(2)} \Big)^\frac{1}{2} \Big) \leq |k| < 2^j \, ,
\end{equation*}
we have that
\begin{equation*}
  \begin{split}
      1 + \Big( \frac{\log(2)}{1 + \log(2)} \Big)^\frac{1}{2} \leq 2^{-j+1}|k| & \iff -\frac{1}{1+\log(2)} \leq (2^{-j+1}|k| - 1)^2 - 1 \\
                                                                               & \iff 1 + \frac{1}{(2^{-j+1}|k| - 1)^2 - 1} \leq -\log(2) \\
                                                                               & \iff \psi_{-1}(2^{-j}|k|) = e^{1+\frac{1}{( 2^{-j+1} |k| -1)^2 - 1}} \leq \frac{1}{2} \, ,
  \end{split}
\end{equation*}
whence,
\begin{equation*}
    \psi_j(|k|) = \psi_{-1}(2^{-j-1}|k|) - \psi_{-1}(2^{-j}|k|) = 1 - \psi_{-1}(2^{-j}|k|) \geq \frac{1}{2} \, .
\end{equation*}
Analogously, for $k \in \Z$ such that
\begin{equation*}
    2^j < |k| \leq 2^j \Big( 1 + \Big( \frac{\log(2)}{1 + \log(2)} \Big)^\frac{1}{2} \Big) < 2^{j+1} \, ,
\end{equation*}
we have that
\begin{equation*}
  \begin{split}
      2^{-j}|k| \leq 1 + \Big( \frac{\log(2)}{1 + \log(2)} \Big)^\frac{1}{2} & \iff - \frac{1}{1+\log(2)} \geq (2^{-j}|k| - 1)^2 - 1 \\
                                                                             & \iff 1 + \frac{1}{(2^{-j}|k| - 1)^2 - 1} \geq -\log(2) \\
                                                                             & \iff \psi_{-1}(2^{-j-1}|k|) = e^{1+\frac{1}{(  2^{-j} |k| - 1)^2 -1 }} \geq \frac{1}{2} \, ,
  \end{split}
\end{equation*}
whence
\begin{equation*}
    \psi_j(|k|) = \psi_{-1}(2^{-j-1}|k|) \geq \frac{1}{2} \, .
\end{equation*}
This concludes the proof.
\end{proof}

The partition of unity is exploited to define the Littlewood-Paley decomposition of a function.

Let $p \in (1,+\infty)$.
Given a function $u \in L^p(\T^1)$ with Fourier series
\begin{equation*}
    u(x) = \sum_{k \in \Z} \hat{u}_k e^{i 2 \pi k x} \, , 
\end{equation*}
for every $j \geq 0$, we introduce the dyadic block $P_j u$ defined by
\begin{equation*}
    P_j u(x) := \sum_{k \in \Z} \psi_j(|k|) \hat{u}_k e^{i 2 \pi k x} = \sum_{2^{j-1} \leq |k| < 2^{j+1}} \psi_j(|k|) \hat{u}_k e^{i 2 \pi k x} \, , \quad  x \in \T^1 \, ,
\end{equation*}
and
\begin{equation*}
    P_{-1} u(x) := \sum_{|k| < 2^{-1}} \psi_{-1}(|k|) \hat{u}_k e^{i 2 \pi k x} = \hat{u}_0 \, , \quad  x \in \T^1 \, .
\end{equation*}
We observe that
\begin{equation} \label{eq:LP_decomposition}
    u(x) = \sum_{j \geq -1} P_j u(x) \, , 
\end{equation}
where the series converges in $L^p(\T^1)$.

\subsubsection{Besov space $B^s_{\infty,\infty}$}
Let $s \in (0,1)$.
Let $u \in L^\infty(\T^1)$.
We consider the Besov space
\begin{equation*}
    B^s_{\infty,\infty}(\T^1) := \Big\{ u \in L^\infty(\T^1) \text{ such that } \sup_{j \geq -1} 2^{s j} \| P_j u \|_{L^\infty(\T^1)} < +\infty \Big\}
\end{equation*}
endowed with the norm
\begin{equation*}
    \| u \|_{B^s_{\infty,\infty}(\T^1)} := \sup_{j \geq -1} 2^{s j} \| P_j u \|_{L^\infty(\T^1)} \, .
\end{equation*}

\subsubsection{Equivalence between norms in $C^{0,s}$ and $B^s_{\infty,\infty}$}

In the next lemma, we recall the equivalence between the H\"older norm $\| \cdot \|_{C^{0,s}(\T^1)}$ and the Besov norm $\| \cdot \|_{B^s_{\infty,\infty}(\T^1)}$, which is a classical result in harmonic analysis.

\begin{lemma} \label{lem:holder_besov}
Let $s \in (0,1)$.
Let $u \in L^\infty(\T^1)$.
Then there exists a constant $C > 0$ such that:
\begin{itemize}
    \item if $u \in C^{0,s}(\T^1)$, then $u \in B^s_{\infty,\infty}(\T^1)$ and $\| u \|_{B^s_{\infty,\infty}(\T^1)} \leq C \| u \|_{C^{0,s}(\T^1)}$;
    \item if $u \in B^s_{\infty,\infty}(\T^1)$, then $u$ admits a  representative in $C^{0,s}(\T^1)$ (not relabeled) and $\| u \|_{C^{0,s}(\T^1)} \leq C \| u \|_{B^s_{\infty,\infty}(\T^1)}$.
\end{itemize}
\end{lemma}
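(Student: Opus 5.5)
We prove the two inequalities directly from the Littlewood--Paley decomposition, working on $\R$ with $1$-periodic functions. For $j\ge0$ write $P_j u = \Phi_j * u$, where
\[
\Phi_j(x)=\sum_{k\in\Z}\psi_j(|k|)e^{i2\pi kx} = 2^{j}\sum_{m\in\Z}\check\psi_0\big(2^{j}(x+m)\big)
\]
is the periodization of the dilated inverse Fourier transform of $\psi_0(|\cdot|)$. Note that $\psi_j(\xi)=\psi_0(2^{-j}\xi)$ for $j\ge0$. The first step is to record two facts about these kernels, using that $\psi_{-1}\in C^{1,1}_c$.
\begin{enumerate}
\item $\check\psi_0$ has decay $|\check\psi_0(y)|\le C(1+|y|)^{-2}$. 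This follows by integrating by parts twice, using that $\psi_0''\in L^\infty$ with compact support.
\item $\int_\R\check\psi_0=\psi_0(0)=0$.
\end{enumerate}
Consequently $\|\Phi_j\|_{L^1(\T^1)}\le \|\check\psi_0\|_{L^1(\R)}\le C$ uniformly in $j$, and $\int_{\T^1}\Phi_j=0$. We also get the moment bound $\int_{\T^1}|\Phi_j(y)|\,|y|^s\,dy\le C2^{-sj}$. This holds because $|y|^s$ against $2^j(1+2^j|y|)^{-2}$ integrates to $C2^{-sj}$ when $s<1$, and the periodization is harmless.

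For the first bullet, take $j\ge0$. Since $\Phi_j$ has zero mean,
\[
P_ju(x)=\int_{\T^1}\Phi_j(y)\big(u(x-y)-u(x)\big)\,dy ,
\]
so $|P_ju(x)|\le[u]_{C^{0,s}}\int|\Phi_j(y)|\,|y|^s\,dy\le C2^{-sj}[u]_{C^{0,s}}$. For $j=-1$ we simply use $|P_{-1}u|=|\hat u_0|\le\|u\|_{L^\infty}$. Together these give $\|u\|_{B^s_{\infty,\infty}}\le C\|u\|_{C^{0,s}}$.

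For the second bullet, let $M=\|u\|_{B^s_{\infty,\infty}}$. Since $\|P_ju\|_\infty\le M2^{-sj}$ is summable, the series \eqref{eq:LP_decomposition} converges uniformly, and its sum is a continuous representative with $\|u\|_{L^\infty}\le CM$. For the seminorm, fix $0<|x-y|\le1$ and pick $N$ with $2^{-N}\sim|x-y|$. Split the series into low frequencies $j<N$ and high frequencies $j\ge N$.
\begin{enumerate}
\item \emph{High frequencies.} We bound crudely: $\sum_{j\ge N}2\|P_ju\|_\infty\le CM2^{-sN}\le CM|x-y|^s$.
\item \emph{Low frequencies.} We need a Bernstein inequality $\|(P_ju)'\|_\infty\le C2^j\|P_ju\|_\infty$ for $j\ge0$. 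To prove it, set $\tilde\psi=\psi_{-1}(|\cdot|/4)$, which equals $1$ on the support $[2^{-1},2]$ of $\psi_0$, and let $\tilde\Phi_j$ be the periodization of $2^j\check{\tilde\psi}(2^j\cdot)$. Then $P_ju=\tilde\Phi_j*P_ju$, and hence $(P_ju)'=\tilde\Phi_j'*P_ju$. The bound $\|\tilde\Phi_j'\|_{L^1}\le C2^j$ follows because $\xi\tilde\psi(\xi)$ is $C^{1,1}$ with compact support, so $(\check{\tilde\psi})'$ again decays like $(1+|y|)^{-2}$. With Bernstein in hand, the mean value theorem gives
\[
\sum_{0\le j<N}|P_ju(x)-P_ju(y)|\le\sum_{j<N}C2^{j(1-s)}M|x-y|\le CM2^{N(1-s)}|x-y|\le CM|x-y|^s ,
\]
using $s<1$. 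The term $P_{-1}u$ is constant and contributes nothing.
\end{enumerate}
For $|x-y|>1$ we use $2\|u\|_\infty$ instead. Combining the cases yields $[u]_{C^{0,s}}\le CM$.

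The main obstacle is the kernel bookkeeping. We only have $C^{1,1}$ regularity of the cutoff, not $C^\infty$, so we must verify that two integrations by parts suffice for the decay of $\check\psi_0$ and of $(\check{\tilde\psi})'$. Both rely on $\psi_{-1}''\in L^\infty$ with compact support. The remaining point is that periodization preserves the $L^1$ and moment bounds. Once these kernel estimates are in place, both directions are routine dyadic summations, valid because $0<s<1$.
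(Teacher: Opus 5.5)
Your proof is correct, and its skeleton matches the paper's. For the first bullet both arguments use a zero-mean kernel together with the moment bound $\int_{\T^1}|y|^s|\Phi_j(y)|\,dy\le C2^{-sj}$. For the second bullet both use uniform convergence of the Littlewood--Paley series, a Bernstein inequality obtained from an auxiliary cutoff equal to $1$ on $\supp\psi_j$, and a split into low and high frequencies at $2^{-N}\sim|x-y|$.

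Where you differ is in how the kernel bounds are proved. You pass to $\R$: by Poisson summation, $\Phi_j$ and $\tilde\Phi_j'$ are periodizations of rescalings of one fixed function each ($\check\psi_0$, resp.\ $(\check{\tilde\psi})'$). Two integrations by parts, using only $\psi_{-1}\in C^{1,1}_c$, give these functions decay $(1+|y|)^{-2}$. Scaling then yields the $2^{-sj}$ moment bound and the $C2^j$ bound on $\|\tilde\Phi_j'\|_{L^1}$ uniformly in $j\ge0$.

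The paper instead stays on $\T^1$ and carries out the discrete analogue. It multiplies the kernel by $(1-e^{-i2\pi z})^2$ and sums by parts twice, which brings in second differences of $\psi_j(|k|)$; these are of size $C2^{-2j}$ because $\psi_j'$ has Lipschitz constant $C2^{-2j}$. Combined with $|1-e^{-i2\pi z}|\ge4|z|$, this gives $|\Psi_j(z)|\le C2^{-j}|z|^{-2}$ and $|M_j(z)|\le C|z|^{-2}$, and trivial bounds handle $|z|\lesssim2^{-j}$.

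What each route buys:
\begin{itemize}
\item Your route is shorter and more uniform. It needs no separate treatment of small $j$, whereas the paper requires $j\ge2$ or $j\ge3$ so that the frequency support avoids $k=0$. Your low-pass cutoff $\psi_{-1}(|\cdot|/4)$ also reuses the given partition instead of introducing a new $C^\infty_c$ bump.
\item The price is Poisson summation, which you assert rather than justify. It does hold here: the rescaled kernels decay like $(1+|y|)^{-2}$, so their periodizations converge uniformly to continuous functions with Fourier coefficients $\psi_j(|k|)$ (resp.\ $2\pi ik\,\tilde\psi(2^{-j}k)$). These are finitely supported, so the periodizations coincide with the trigonometric polynomials $\Phi_j$, $\tilde\Phi_j'$.
\item The paper's route is fully self-contained on $\T^1$ and needs no continuous Fourier analysis.
\end{itemize}

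Finally, your bound $\|u\|_{L^\infty}\le\sum_{j\ge-1}\|P_ju\|_{L^\infty}\le CM$ is more direct than the paper's, which goes through $[u]_{C^{0,s}}$ and $|\hat u_0|$.
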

\begin{proof}
    The interested reader will find the proof in Appendix~\ref{app:holder_besov}.
\end{proof}

\section{Description of the model, basic results, and well-posedness} \label{sec:model}

\subsection{The linear peridynamics model}
We describe here in detail the linear peridynamics model studied in this paper. The original model is an integro-differential equation whose unknown is a vector field (the displacement field).
Here, we consider the one-dimensional periodic case.
Restricting the model to the one-dimensional case is convenient to simplify the exposition of the explicit example of nucleation and dissolution of jump discontinuities.
The spatial periodicity assumption is chosen for the sake of simplicity. Similar constructions can be done in the case of solutions defined on the whole real line.

\smallskip

The problem is finding solutions $u \colon (0,+\infty) \times \T^1 \to \R$ to the following Cauchy problem:
\[ \label{eq:peridynamics}
  \begin{cases}
    \de_{tt} u(t,x) - K[u(t,\cdot)](x) = 0 \, , & \text{for } t > 0 \text{ and } x \in \T^1 \, , \\
    u(0,x) = u_0(x) \, , \quad \de_t u(0,x) = v_0(x) \, , & \text{for } x \in \T^1 \, ,
  \end{cases}
\]
where $u_0$ and $v_0$ are given initial data and $K$ is the peridynamic operator defined precisely in Subsection~\ref{sec:K} below.

\subsection{The peridynamic operator} \label{sec:K}
In this subsection we define precisely the peridynamic operator and we list some of its properties.

Let $\delta > 0$ be a parameter (the peridynamic horizon) and let $\chi_\delta$ be a function such that
\[
\chi_\delta(y) = \chi\Big(\frac{|y|}{\delta}\Big) \, , \quad \text{for every } y \in \R \, ,
\]
where $\chi \in L^\infty(\R)$ satisfies:
\[ \label{eq:chi_properties}
0 \leq \chi \leq 1 \, , \quad \supp(\chi) \subset [-1,1] \, , \quad \chi(s) = 1 \text{ for } |s| < \frac{1}{2} \, , \quad \text{and} \quad \chi \in C^\infty([-1,1]) \, .
\]
Let $\alpha \in (0,1)$ be the exponent of the nonlocal interaction.
In this subsection, we provide the definitions for $\alpha \in (0,1)$, but the main results of the paper hold true for $\alpha \in (0,\frac{1}{2})$.

\begin{definition}
The \emph{peridynamic operator} $K$ is defined for any $u \in C^\infty(\T^1)$ by
\[
K[u](x) = - \pvint_{\R} \chi_\delta(y) \frac{u(x) - u(x-y)}{|y|^{1+2\alpha}} \d y \, , \quad \text{for every } x \in \T^1 \, ,
\]
where $u$ is extended by periodicity and $\pvint$ denotes the principal value integral
\[
    \pvint_{\R} \chi_\delta(y) \frac{u(x) - u(x-y)}{|y|^{1+2\alpha}}  \d y = \lim_{\e \to 0^+} \int_{\{\e \leq |y|\}} \chi_\delta(y) \frac{u(x) - u(x-y)}{|y|^{1+2\alpha}}  \d y \, .
\]
\end{definition}

We start by recasting the definition of the peridynamic operator in terms of a classical integral, which is more convenient for later computations.
The precise statement is given in the next lemma.

\begin{lemma} \label{lem:K_alternative}
Let $u \in C^\infty(\T^1)$.
Then
\[ \label{eq:K_alternative}
K[u](x) = \frac{1}{2} \int_{\R} \chi_\delta(y) \frac{u(x+y) + u(x-y) - 2 u(x)}{|y|^{1+2\alpha}} \d y \, .
\]
Moreover, $K[u] \in L^\infty(\T^1)$.
\end{lemma}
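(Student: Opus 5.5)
The plan is to symmetrize the principal value integral by exploiting the evenness of the kernel, and then use a second-order Taylor expansion to show that the resulting integrand is absolutely integrable. This lets us drop the principal value and bound $K[u]$ uniformly.

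Fix $x \in \T^1$ and $\e > 0$. The kernel $y \mapsto \chi_\delta(y)|y|^{-1-2\alpha}$ is even, since $\chi_\delta(y) = \chi(|y|/\delta)$. The change of variables $y \mapsto -y$ on $\{\e \leq |y|\}$ therefore gives
\[
\int_{\{\e \leq |y|\}} \chi_\delta(y) \frac{u(x) - u(x-y)}{|y|^{1+2\alpha}} \d y = \int_{\{\e \leq |y|\}} \chi_\delta(y) \frac{u(x) - u(x+y)}{|y|^{1+2\alpha}} \d y \, .
\]
Averaging the two expressions and changing sign yields
\[
-\int_{\{\e \leq |y|\}} \chi_\delta(y) \frac{u(x) - u(x-y)}{|y|^{1+2\alpha}} \d y = \frac{1}{2}\int_{\{\e \leq |y|\}} \chi_\delta(y) \frac{u(x+y) + u(x-y) - 2u(x)}{|y|^{1+2\alpha}} \d y \, .
\]

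Next we show that the symmetrized integrand lies in $L^1(\R)$, uniformly in $x$. Since $u \in C^\infty(\T^1)$ is periodic, $u''$ is bounded. Taylor's formula with integral remainder then gives
\[
|u(x+y) + u(x-y) - 2u(x)| \leq \|u''\|_{L^\infty} |y|^2 \, .
\]
Together with $0 \leq \chi_\delta \leq 1$ and $\supp \chi_\delta \subset [-\delta,\delta]$, the integrand is dominated by $\|u''\|_{L^\infty} |y|^{1-2\alpha}\mathds{1}_{[-\delta,\delta]}(y)$. This function is integrable because $1-2\alpha > -1$ for $\alpha \in (0,1)$. (One could also use the cruder bound $4\|u\|_{L^\infty}|y|^{-1-2\alpha}$ for $|y|\ge 1$, but it is unnecessary since the support is compact.)

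By dominated convergence, the limit as $\e \to 0^+$ exists and equals the Lebesgue integral over $\R$, which is exactly \eqref{eq:K_alternative}. The same domination gives
\[
|K[u](x)| \leq \frac{1}{2}\|u''\|_{L^\infty} \int_{-\delta}^{\delta} |y|^{1-2\alpha} \d y = \frac{\delta^{2-2\alpha}}{2-2\alpha} \|u''\|_{L^\infty} \, ,
\]
for every $x$. Measurability, indeed continuity, of $K[u]$ in $x$ also follows by dominated convergence, so $K[u] \in L^\infty(\T^1)$.

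There is no serious obstacle here. The only delicate point is justifying the change of variables on the truncated domain before passing to the limit, which relies on the evenness of $\chi_\delta$; this is why the symmetrization is carried out for fixed $\e$ rather than on the (a priori only principal value) full integral.
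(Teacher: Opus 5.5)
Your proof is correct and follows essentially the same route as the paper: you symmetrize using the evenness of $\chi_\delta$, then use the second-order Taylor bound $|u(x+y)+u(x-y)-2u(x)|\le C|y|^2$ to get the integrable majorant $C|y|^{1-2\alpha}$ on $[-\delta,\delta]$, which both removes the principal value and gives the uniform bound on $K[u]$. Carrying out the symmetrization at fixed $\e$ before letting $\e\to0^+$, and giving the explicit constant $\frac{\delta^{2-2\alpha}}{2-2\alpha}\|u''\|_{L^\infty}$, is just a slightly more careful write-up of the paper's argument.
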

\begin{proof}
  Let $u \in C^\infty(\T^1)$.
  By the symmetry of $\chi_\delta$, a change of variables yields
  \[
  \begin{split}
    K[u](x) & = - \pvint_{\R} \chi_\delta(y) \frac{u(x) - u(x-y)}{|y|^{1+2\alpha}} \d y \\
            & = - \pvint_{\R} \chi_\delta(y) \frac{u(x) - u(x+y)}{|y|^{1+2\alpha}} \d y \, ,
  \end{split}
  \]
  whence
  \[
  K[u](x) = \frac{1}{2} \pvint_{\R} \chi_\delta(y) \frac{u(x+y) + u(x-y) - 2 u(x)}{|y|^{1+2\alpha}} \d y \, .
  \]
  The latter is, in fact, an integral in the classical sense.
  Indeed, by Taylor expansion, we have that
  \[
  u(x+y) + u(x-y) - 2u(x) = \rho(y) y^2  \, , \quad \|\rho \|_{L^\infty} \leq C \, ,
  \]
  from which we deduce the estimate
  \[
  \Big| \chi_\delta(y) \frac{u(x+y) + u(x-y) - 2 u(x)}{|y|^{1+2\alpha}} \Big| \leq C \frac{1}{|y|^{2\alpha - 1}} \, , \quad \text{on } \{|y| \leq \delta\} \, ,
  \]
  which is integrable for $\alpha < 1$ and allows us to write~\eqref{eq:K_alternative}.
  Moreover,
  \[
  |K[u](x)| \leq C \int_{\{|y| \leq \delta\}} \frac{1}{|y|^{2\alpha - 1}} \d y < +\infty \, , \quad \text{for every } x \in \T^1 \, .
  \]
  This completes the proof of the lemma.
\end{proof}

\subsection{The dispersion relation}
Next, we derive the representation of the peridynamic operator as a Fourier multiplier operator.
This representation will be useful for deriving the dispersion relation of the linear peridynamics equation.

\begin{lemma} \label{lem:K_fourier}
  Let $u \in C^\infty(\T^1)$. Then
  \[
      (\hat{K[u]})_k = - \omega^2(k) \hat{u}_k \, , \quad \text{for every } k \in \Z \, ,
  \]
  where
  \[
    \omega(k) := \Big( \int_{\R} \chi_\delta(y) \frac{1 - \cos(2\pi k y)}{|y|^{1+2\alpha}} \d y  \Big)^{\frac{1}{2}} \, , \quad \text{for every } k \in \Z \, .
  \]
  Moreover,
  \[ \label{eq:omega_limit}
  \lim_{k \to \infty} \frac{\omega(k)}{|k|^{\alpha}} = \gamma := \Big( \int_{\R} \frac{1 - \cos(2\pi z)}{|z|^{1+2\alpha}} \d z \Big)^{\frac{1}{2}} \in (0,+\infty) \, .
  \]
\end{lemma}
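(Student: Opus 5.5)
We compute the Fourier coefficients of $K[u]$ directly from the representation in Lemma~\ref{lem:K_alternative}. Since $u \in C^\infty(\T^1)$, we expand $u(x) = \sum_k \hat u_k e^{i2\pi kx}$. For fixed $y$ we have
\[
u(x+y)+u(x-y)-2u(x) = \sum_k \hat u_k e^{i2\pi kx}\,\bigl(2\cos(2\pi k y)-2\bigr).
\]
We then multiply by $e^{-i2\pi kx}$, integrate over $\T^1$, and exchange the $x$ and $y$ integrals by Fubini. This is justified because, by the Taylor bound in the proof of Lemma~\ref{lem:K_alternative}, the integrand is bounded by $C|y|^{1-2\alpha}\chi_\delta(y)$ uniformly in $x$, which is integrable on $[-\delta,\delta]$. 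The result is
\[
(\hat{K[u]})_k = \frac12\int_\R \chi_\delta(y)\frac{2\cos(2\pi ky)-2}{|y|^{1+2\alpha}}\,\d y\;\hat u_k = -\omega^2(k)\hat u_k .
\]
The integral defining $\omega^2(k)$ is finite and nonnegative. Near $0$ we use $0\le 1-\cos(2\pi ky)\le 2\pi^2k^2y^2$, and $\chi_\delta$ has compact support. So taking the square root is legitimate.

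For the asymptotics, assume $k\neq 0$ and, by evenness, $k>0$. We substitute $z = ky$:
\[
\omega^2(k) = k^{2\alpha}\int_\R \chi\Big(\frac{|z|}{k\delta}\Big)\frac{1-\cos(2\pi z)}{|z|^{1+2\alpha}}\,\d z .
\]
As $k\to\infty$, $\chi(|z|/(k\delta))\to 1$ pointwise, since $\chi(s)=1$ for $|s|<\tfrac12$. The integrand is dominated by $(1-\cos(2\pi z))|z|^{-1-2\alpha}$, because $0\le\chi\le1$. This dominating function is integrable: it is $O(|z|^{1-2\alpha})$ near $0$ and $O(|z|^{-1-2\alpha})$ at infinity, using $\alpha\in(0,1)$. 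Dominated convergence then gives $\omega^2(k)/k^{2\alpha}\to\gamma^2$.

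Finally, $\gamma^2$ is finite by the same integrability bound, and strictly positive because the integrand is nonnegative and not a.e. zero. Hence $\gamma\in(0,+\infty)$.

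There is no real obstacle here. The only points requiring care are the Fubini justification and the choice of the dominating function, both of which follow from the integrability estimates above.
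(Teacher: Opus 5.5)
Your proposal is correct and follows essentially the same route as the paper: Fubini applied to the symmetric second-difference representation from Lemma~\ref{lem:K_alternative} to identify the multiplier $-\omega^2(k)$, then the rescaling $y = z/|k|$ and dominated convergence for the limit. The only differences are minor: you expand $u$ in its Fourier series instead of using the translation rule for Fourier coefficients directly, and you spell out the finiteness of $\omega^2(k)$, the integrable majorant, and $\gamma>0$, which the paper leaves implicit.
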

\begin{proof}
  Arguing as in the proof of Lemma~\ref{lem:K_alternative}, we have that
  \[
  \chi_\delta(y) \frac{u(x+y) + u(x-y) - 2 u(x)}{|y|^{1+2\alpha}} e^{-i 2 \pi k x}  \in L^1(\T^1 \times [-\delta,\delta]) \, ,
  \]
  hence by Lemma~\ref{lem:K_alternative} and by Fubini's theorem, we compute the Fourier coefficients of $K[u]$ via the formula
  \[
  \begin{split}
    \hat{K[u]}_k & = \int_{\T^1} K[u](x) e^{-i 2 \pi k x} \d x \\
    & =  \frac{1}{2}  \int_{\T^1} \Big( \int_{\R} \chi_\delta(y) \frac{u(x+y) + u(x-y) - 2 u(x)}{|y|^{1+2\alpha}} e^{-i 2 \pi k x} \d y \Big) \d x  \\
    & = \frac{1}{2} \int_{\R} \frac{\chi_\delta(y)}{|y|^{1+2\alpha}} \Big( \int_{\T^1} \Big( u(x+y) + u(x-y) - 2 u(x) \Big) e^{-i 2 \pi k x} \d x \Big) \d y \\
    & = \frac{1}{2} \Big( \int_{\R} \chi_\delta(y) \frac{e^{i 2 \pi k y}  + e^{-i 2 \pi k y}  - 2 }{|y|^{1+2\alpha}} \d y \Big) \hat{u}_k \\
    & = - \Big( \int_{\R} \chi_\delta(y) \frac{1-\cos(2 \pi k y)}{|y|^{1+2\alpha}} \d y \Big) \hat{u}_k \, .
  \end{split}
  \]

  To prove the limit in~\eqref{eq:omega_limit}, we change variables $y = \frac{z}{|k|}$, obtaining that
  \[
  \begin{split}
    \omega(k) & = \Big( \int_{\R} \chi_\delta(y) \frac{1 - \cos(2\pi k y)}{|y|^{1+2\alpha}} \d y  \Big)^{\frac{1}{2}} = |k|^{\alpha} \Big( \int_{\R} \chi_\delta\Big(\frac{z}{|k|}\Big) \frac{1 - \cos(2\pi z)}{|z|^{1+2\alpha}} \d z  \Big)^{\frac{1}{2}} \, .
  \end{split}
  \]
  Then~\eqref{eq:omega_limit} follows by the Dominated Convergence Theorem.
\end{proof}

\begin{remark} \label{rem:omega_bounds}
    By Lemma~\ref{lem:K_fourier}, we have, in particular, that there exist constants $C_1, C_2 > 0$ (depending on $\alpha$ and $\delta$) such that
  \[
  C_1 |k|^{\alpha} \leq \omega(k) \leq C_2 |k|^{\alpha} \, , \quad \text{for every } k \in \Z \setminus \{0\} \, .
  \]
\end{remark}

\begin{remark}
  By Lemma~\ref{lem:K_fourier}, we have that
  \[
  u \in H^s(\T^1) \implies K[u] \in H^{s-2\alpha}(\T^1) \, .
  \]
  In particular, if $u \in C^\infty(\T^1)$, then $K[u] \in C^\infty(\T^1)$.
\end{remark}

For the main result of the paper, it is crucial to derive a precise estimate on the derivative of the symbol $\omega$, estimating its mismatch with the derivative of the symbol $|\xi|^\alpha$ (which is, up to a normalization constant, the symbol of the fractional Laplacian $(-\Delta)^\frac{\alpha}{2}$).
We extend the definition of $\omega$ to the whole real line by setting
\[ \label{eq:omega_cont}
\omega(\xi) := \Big( \int_{\R} \chi_\delta(y) \frac{1 - \cos(2\pi \xi y)}{|y|^{1+2\alpha}} \, \d y \Big)^\frac{1}{2} \, , \quad \text{for } \xi \in \R \, .
\]
The result is the following.

\begin{lemma} \label{lem:omega_prime}
  Let $\alpha \in (0,1)$.
  Let $\omega$ be as in~\eqref{eq:omega_cont} and let $\gamma$ be as in~\eqref{eq:omega_limit}.
  Then $\omega$ is differentiable in $\R \setminus \{0\}$ and
  \[ \label{eq:omega_prime_asymptotic}
      |\omega'(\xi) - \alpha \gamma \mathrm{sign}(\xi)|\xi|^{\alpha - 1}| \leq C |\xi|^{-\alpha - 1} \, , \quad \text{for every } \xi \in \R \setminus \{0\} \, .
  \]
\end{lemma}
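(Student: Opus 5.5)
We work with $\Omega(\xi) := \omega^2(\xi) = \int_{\R} \chi_\delta(y) \frac{1-\cos(2\pi \xi y)}{|y|^{1+2\alpha}} \d y$ rather than with $\omega$ itself. By evenness it suffices to treat $\xi > 0$. Near $0$ the claimed bound is trivial: for $|\xi|\le 1$ the right-hand side $C|\xi|^{-\alpha-1}$ dominates both $|\omega'(\xi)|$ and $\alpha\gamma|\xi|^{\alpha-1}$, provided we show $|\omega'(\xi)|\le C|\xi|^{-\alpha-1}$ there. The whole problem is therefore the asymptotics of $\omega'$ for large $\xi$, plus a crude bound on compact subsets of $(0,1]$.

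\textbf{Step 1: differentiability.} Differentiating under the integral is justified because $|\partial_\xi(1-\cos(2\pi\xi y))| = 2\pi|y||\sin(2\pi\xi y)|\le 4\pi^2|\xi| y^2$. This gives a majorant $C y^{1-2\alpha}$, integrable on $[-\delta,\delta]$, locally uniformly in $\xi$. Hence
\[
\Omega'(\xi) = 2\pi\int_{\R}\chi_\delta(y)\frac{y\sin(2\pi\xi y)}{|y|^{1+2\alpha}}\d y .
\]
Since $\Omega>0$ for $\xi\neq0$, the function $\omega=\Omega^{1/2}$ is differentiable with $\omega' = \Omega'/(2\omega)$. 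For $\xi\in(0,1]$, one has $\Omega(\xi)\ge c\,\xi^2$ by the Taylor bound on $\{|y|<\delta/2\}$, where $\chi=1$, and $|\Omega'(\xi)|\le C\xi$. Therefore $|\omega'|\le C$ on $(0,1]$, which is bounded by $C\xi^{-\alpha-1}$ there.

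\textbf{Step 2: asymptotics of $\Omega$ and $\Omega'$ with rate $\xi^{-2\alpha}$ relative error.} Change variables $y=z/\xi$:
\[
\Omega(\xi) = \xi^{2\alpha}\int_{\R}\chi\Big(\frac{|z|}{\delta\xi}\Big)\frac{1-\cos(2\pi z)}{|z|^{1+2\alpha}}\d z .
\]
Hence
\[
\Omega(\xi) = \gamma^2\xi^{2\alpha} - \xi^{2\alpha}\int_{\R}\Big(1-\chi\Big(\frac{|z|}{\delta\xi}\Big)\Big)\frac{1-\cos 2\pi z}{|z|^{1+2\alpha}}\d z .
\]
The integrand of the last integral lives on $|z|\ge\delta\xi/2$ and is bounded by $2|z|^{-1-2\alpha}$, so the error is $O(\xi^{2\alpha}\xi^{-2\alpha})=O(1)$. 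Thus $\Omega=\gamma^2\xi^{2\alpha}+O(1)$, and consequently
\[
\omega = \gamma\xi^\alpha + O(\xi^{-\alpha}).
\]

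For $\Omega'$, write the integral directly in $y$:
\[
\Omega'(\xi) = 4\pi\int_0^\infty \chi\Big(\frac{y}{\delta}\Big) y^{-2\alpha}\sin(2\pi\xi y)\d y .
\]
The full-space piece $4\pi\int_0^\infty y^{-2\alpha}\sin(2\pi\xi y)\d y$ is a conditionally convergent integral equal to $c_\alpha\xi^{2\alpha-1}$. Differentiating the identity $\Omega\sim\gamma^2\xi^{2\alpha}$ by scaling shows $c_\alpha = 2\alpha\gamma^2$. Alternatively, this follows by integrating by parts in the formula for $\gamma^2$.

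The remainder is $-4\pi\int_0^\infty(1-\chi(y/\delta))y^{-2\alpha}\sin(2\pi\xi y)\d y$. Its integrand is smooth on $[\delta/2,\delta]$ (since $\chi\in C^\infty([-1,1])$) and equals $y^{-2\alpha}$ for $y\ge\delta$. We integrate by parts once, integrating $\sin(2\pi\xi y)$ to $-\cos(2\pi\xi y)/(2\pi\xi)$. The boundary terms vanish at $\delta/2$, because $1-\chi=0$ there. The possible jump of $\chi$ at $y=\delta$ (for example when $\chi$ is an indicator) only contributes a boundary term of size $O(1/\xi)$. The derivative terms are integrable, with $\int_\delta^\infty y^{-2\alpha-1}\d y<\infty$. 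So the remainder is $O(\xi^{-1})$, and
\[
\Omega' = 2\alpha\gamma^2\xi^{2\alpha-1} + O(\xi^{-1}).
\]

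\textbf{Step 3: combining.} Write
\[
\omega' - \alpha\gamma\xi^{\alpha-1} = \frac{\Omega' - 2\alpha\gamma\xi^{\alpha-1}\omega}{2\omega}.
\]
By Step 2, the numerator is
\[
2\alpha\gamma^2\xi^{2\alpha-1} + O(\xi^{-1}) - 2\alpha\gamma\xi^{\alpha-1}\big(\gamma\xi^\alpha + O(\xi^{-\alpha})\big) = O(\xi^{-1}).
\]
The denominator satisfies $2\omega\ge c\,\xi^\alpha$ by Remark~\ref{rem:omega_bounds}. Together these give the bound $C\xi^{-1-\alpha}$ for $\xi\ge1$.

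The main obstacle is Step 2 for $\Omega'$: the oscillatory integral is only conditionally convergent, and $\chi$ may be discontinuous at $|y|=\delta$. Both issues are handled by the single integration by parts on $[\delta/2,\infty)$ described above. The identification $c_\alpha=2\alpha\gamma^2$ is a routine scaling and integration-by-parts computation.
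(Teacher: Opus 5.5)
Your proof is correct, but it takes a different route from the paper's.

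\textbf{The paper's route.} The paper rescales before differentiating, writing $\omega^2(\xi)=\xi^{2\alpha}I(\xi)$ with $I(\xi)=2\int_0^{\delta\xi}\chi_\delta(z/\xi)\frac{1-\cos(2\pi z)}{|z|^{1+2\alpha}}\,\d z$, and then differentiates $I$. After rescaling, the $\xi$-dependence sits only in the cutoff $\chi_\delta(z/\xi)$ and in the upper limit $\delta\xi$. So $|I'(\xi)|\le C\xi^{-1-2\alpha}$ follows from crude size bounds: a boundary term, plus a $\chi'$-term supported in $[\delta\xi/2,\delta\xi]$. No oscillation is used anywhere. The leading term $\alpha\xi^{\alpha-1}I^{1/2}$ carries the constant $\gamma$ automatically through $|I^{1/2}-\gamma|\le C\xi^{-2\alpha}$.

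\textbf{Your route.} You differentiate in the unscaled variable $y$, which turns $\Omega'$ into an oscillatory integral whose principal part $4\pi\int_0^\infty y^{-2\alpha}\sin(2\pi\xi y)\,\d y$ is only conditionally convergent. This costs you two extra steps.
\begin{itemize}
\item You must identify $c_\alpha=2\alpha\gamma^2$. Your integration by parts in the formula for $\gamma^2$ does this correctly. The remark about ``differentiating $\Omega\sim\gamma^2\xi^{2\alpha}$ by scaling'' would not be a proof by itself.
\item You must exploit the oscillation of $\sin(2\pi\xi y)$, via one integration by parts on $[\delta/2,\infty)$, to get the $O(\xi^{-1})$ remainder. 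Your handling of the possible jump of $\chi$ at $1$ there is fine.
\end{itemize}

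\textbf{Comparison.} Both routes reach the same intermediate facts: $\Omega=\gamma^2\xi^{2\alpha}+O(1)$ and $\Omega'=2\alpha\gamma^2\xi^{2\alpha-1}+O(\xi^{-1})$. Indeed $\Omega'=2\alpha\xi^{2\alpha-1}I+\xi^{2\alpha}I'$, so the paper's bounds give exactly your expansion. The paper's version is therefore more economical: it needs no conditionally convergent integrals and no identification of constants. On the other hand, your treatment of $\xi\in(0,1]$, via $\Omega\ge c\xi^2$ and $|\Omega'|\le C\xi$ giving $|\omega'|\le C$, is an actual argument. The paper only asserts the small-$\xi$ bound.

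\textbf{Two small points to tidy.}
\begin{itemize}
\item Remark~\ref{rem:omega_bounds} is stated only for integers $k$. For real $\xi\ge 1$, take the lower bound $\omega\ge c\,\xi^\alpha$ from your own expansion $\Omega=\gamma^2\xi^{2\alpha}+O(1)$, together with positivity and continuity of $\Omega$ on compact subsets of $(0,\infty)$.
\item The Taylor lower bound $1-\cos(2\pi\xi y)\ge c\,\xi^2y^2$ needs $|2\pi\xi y|$ bounded away from $2\pi$. When $\delta$ is large, restrict to $|y|<\min\{\delta/2,1/4\}$ rather than $|y|<\delta/2$.
\end{itemize}
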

\begin{proof}
  Since $\omega$ is even, it is enough to prove the statement for $\xi > 0$.
  By the change of variable $z = \xi y$, we have that, by the evenness of the integrand,
  \[
  \omega^2(\xi) = 2 \int_{0}^{\delta} \chi_\delta(y) \frac{1 - \cos(2\pi \xi y)}{|y|^{1+2\alpha}} \, \d y = \xi^{2\alpha} \ 2 \int_{0}^{\delta \xi} \chi_\delta\Big(\frac{z}{\xi}\Big) \frac{1 - \cos(2\pi z)}{|z|^{1+2\alpha}} \, \d z = \xi^{2\alpha} I(\xi) \, ,
  \]
  where we set
  \[
  I(\xi) := 2 \int_{0}^{\delta \xi} \chi_\delta\Big(\frac{z}{\xi}\Big) \frac{1 - \cos(2\pi z)}{|z|^{1+2\alpha}} \, \d z \, , \quad \text{for } \xi > 0 \, .
  \]
  Note that $I(\xi) \geq 0$ for every $\xi > 0$.

  We now establish some estimates on $I$ and $I'$.
  First of all, by~\eqref{eq:chi_properties}, we obtain the lower bound for $\xi \geq \frac{1}{\delta}$:
  \[ \label{eq:I_lower_bound}
  I(\xi) \geq 2 \int_{0}^{\delta \xi/2} \chi_\delta\Big(\frac{z}{\xi}\Big) \frac{1 - \cos(2\pi z)}{|z|^{1+2\alpha}} \, \d z = 2 \int_{0}^{\delta \xi/2}  \frac{1 - \cos(2\pi z)}{|z|^{1+2\alpha}} \, \d z \geq 2 \int_0^{1/2} \frac{1 - \cos(2\pi z)}{|z|^{1+2\alpha}} \, \d z =: c\, ,
  \]
  with $c > 0$.

  Moreover, by~\eqref{eq:chi_properties} we observe that
  \[
  \begin{split}
    |I(\xi) - \gamma^2| & = 2 \Big| \int_0^{\delta \xi} \chi_\delta\Big(\frac{z}{\xi}\Big) \frac{1 - \cos(2\pi z)}{|z|^{1+2\alpha}} \, \d z - \int_0^{+\infty} \frac{1 - \cos(2\pi z)}{|z|^{1+2\alpha}} \, \d z \Big| \\
    & = 2 \Big| \int_{\delta \xi/2}^{+\infty} \Big(1 - \chi_\delta\Big(\frac{z}{\xi}\Big)\Big) \frac{1 - \cos(2\pi z)}{|z|^{1+2\alpha}} \, \d z \Big| \leq 4 \int_{\delta \xi/2}^{+\infty} \frac{1}{|z|^{1+2\alpha}} \, \d z \leq C \xi^{-2\alpha} \, ,
  \end{split}
  \]
  for every $\xi > 0$.
  In particular, we get that,
  \[\label{eq:I_minus_gamma}
  |I(\xi)^\frac{1}{2} - \gamma| = \frac{|I(\xi) - \gamma^2|}{I(\xi)^{\frac{1}{2}} + \gamma} \leq \frac{C \xi^{-2\alpha}}{\gamma} \leq C \xi^{-2\alpha}
  \]
  for every $\xi > 0$.

  Finally, by~\eqref{eq:chi_properties}, we have that
  \[
  \begin{split}
    I'(\xi) & = 2 \delta \chi_\delta\Big(\frac{\delta \xi}{\xi}\Big) \frac{1 - \cos(2\pi \delta \xi)}{\delta^{1+2\alpha} \xi^{1+2\alpha}} - 2 \int_0^{\delta \xi} \chi_\delta'\Big(\frac{z}{\xi}\Big) \frac{z}{\xi^2} \frac{1 - \cos(2\pi z)}{|z|^{1+2\alpha}} \, \d z \\
            & = \xi^{-1-2\alpha} \frac{2 \chi(1)}{\delta^{2\alpha}} (1 - \cos(2\pi \delta \xi))  - \xi^{-2} \ 2 \int_{\frac{\delta \xi}{2}}^{\delta \xi} \chi_\delta'\Big(\frac{z}{\xi}\Big) \frac{1 - \cos(2\pi z)}{|z|^{2\alpha}} \, \d z \, ,
  \end{split}
  \]
  hence
  \[ \label{eq:I_prime_bound}
      |I'(\xi)| \leq C \xi^{-1-2\alpha} + C \xi^{-2} \int_{\frac{\delta \xi}{2}}^{\delta \xi} \frac{1}{|z|^{2\alpha}} \, \d z \leq C \xi^{-1-2\alpha} + C \xi^{-2} \xi^{1-2\alpha} = C \xi^{-1-2\alpha} \, ,
  \]
  for every $\xi > 0$.

  Now, since $\omega(\xi) = \xi^\alpha I(\xi)^\frac{1}{2}$, we observe that
  \[
  \omega'(\xi) = \alpha \xi^{\alpha - 1} I(\xi)^{\frac{1}{2}} + \xi^\alpha \frac{I'(\xi)}{2 I(\xi)^{\frac{1}{2}}} \, , \quad \text{for } \xi > 0 \, ,
  \]
  from which we deduce, putting together the estimates~\eqref{eq:I_lower_bound},~\eqref{eq:I_minus_gamma} and~\eqref{eq:I_prime_bound}, that
  \[
  \begin{split}
    |\omega'(\xi) - \alpha \gamma \xi^{\alpha - 1}| \leq \alpha \xi^{\alpha - 1} |I(\xi)^{\frac{1}{2}} - \gamma| + \xi^\alpha \frac{|I'(\xi)|}{2 I(\xi)^{\frac{1}{2}}} \leq C \xi^{\alpha - 1} \xi^{-2\alpha} + C \xi^\alpha \xi^{-1-2\alpha} = C \xi^{-\alpha - 1} \, ,
  \end{split}
  \]
  for every $\xi \geq \frac{1}{\delta}$.

  If $\xi \in (0,\frac{1}{\delta})$, we obtain the estimate (possibly modifying the constant $C$) by observing that $|\omega'(\xi) - \alpha \gamma \xi^{\alpha - 1}|$ is bounded by a constant times $1+\xi^{\alpha-1}$. This concludes the proof of~\eqref{eq:omega_prime_asymptotic} and thus of the lemma.
\end{proof}

\subsection{Energy space} \label{subsec:energy-space}
In this subsection we define and characterize the natural energy space to which solutions to the peridynamics equation~\eqref{eq:peridynamics} belong.

\vspace{1em}

We define the \emph{energy space} associated with the peridynamic operator $K$ as follows:
\[
\W^\alpha_\delta (\T^1) := \Big\{ u \in L^2(\T^1) \text{ s.t. } \frac{1}{2} \int_{\T^1} \int_{\R} \chi_\delta(y) \frac{|u(x+y) - u(x)|^2}{|y|^{1+2\alpha}} \, \d y \, \d x < +\infty \Big\} \, ,
\]
endowed with the norm
\[
\|u\|_{\W^\alpha_\delta(\T^1)} = \big( \|u\|_{L^2(\T^1)}^2 + [u]_{\W^\alpha_\delta(\T^1)}^2 \big)^{\frac{1}{2}} \, ,
\]
where
\[
[u]_{\W^\alpha_\delta(\T^1)} = \Big( \frac{1}{2} \int_{\T^1} \int_{\R} \chi_\delta(y) \frac{|u(x+y) - u(x)|^2}{|y|^{1+2\alpha}} \, \d y \, \d x \Big)^{\frac{1}{2}} \, .
\]

In the next proposition, we show that the energy space $\W^\alpha_\delta(\T^1)$ is equivalent to the fractional Sobolev space $H^{\alpha}(\T^1)$.

\begin{proposition} \label{prop:W_equiv_H}
  Let $\delta > 0$ and let $\alpha \in (0,1)$.
  Then the energy space $\W^\alpha_\delta(\T^1)$ is equivalent to the fractional Sobolev space $H^{\alpha}(\T^1)$, that is, there exists a constant $C > 0$ (depending on $\alpha$ and $\delta$) such that
  \begin{align}
    \|u\|_{\W^\alpha_\delta(\T^1)} & \leq C \|u\|_{H^{\alpha}(\T^1)} \, , \quad \text{for every } u \in H^{\alpha}(\T^1) \, ,  \label{eq:W_equiv_H_1} \\
    \|u\|_{H^{\alpha}(\T^1)} & \leq C \|u\|_{\W^\alpha_\delta(\T^1)} \, , \quad \text{for every } u \in \W^\alpha_\delta(\T^1) \, . \label{eq:W_equiv_H_2}
  \end{align}
\end{proposition}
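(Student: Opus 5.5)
The plan is to compute the seminorm $[u]_{\W^\alpha_\delta(\T^1)}$ exactly in Fourier variables, in the spirit of Lemma~\ref{lem:K_fourier}, and then compare the resulting multiplier with $(1+|k|^2)^\alpha$ using Remark~\ref{rem:omega_bounds}. For fixed $y$, the function $x \mapsto u(x+y) - u(x)$ lies in $L^2(\T^1)$ and has Fourier coefficients $(e^{i2\pi k y} - 1)\hat u_k$. Parseval's identity then gives
\[
\int_{\T^1} |u(x+y) - u(x)|^2 \, \d x = \sum_{k \in \Z} |e^{i 2\pi k y} - 1|^2 |\hat u_k|^2 = \sum_{k\in\Z} 2\big(1 - \cos(2\pi k y)\big) |\hat u_k|^2 \, .
\]
Everything is nonnegative, so we may multiply by $\chi_\delta(y)|y|^{-1-2\alpha}$, integrate in $y$, and swap sum and integral by Tonelli's theorem. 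This yields, for every $u \in L^2(\T^1)$ (both sides possibly $+\infty$),
\[
[u]_{\W^\alpha_\delta(\T^1)}^2 = \sum_{k \in \Z} \omega^2(k) |\hat u_k|^2 \, ,
\]
with $\omega$ as in Lemma~\ref{lem:K_fourier}. Together with $\|u\|_{L^2}^2 = \sum_k |\hat u_k|^2$, this gives
\[
\|u\|_{\W^\alpha_\delta(\T^1)}^2 = \sum_{k \in \Z} \big(1 + \omega^2(k)\big) |\hat u_k|^2 \, .
\]
The Tonelli step is the reason the identity holds on all of $L^2$ without any density argument. In particular, $u \in \W^\alpha_\delta(\T^1)$ if and only if this sum is finite.

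It remains to show that $1 + \omega^2(k)$ and $(1+|k|^2)^\alpha$ are comparable uniformly in $k \in \Z$.
\begin{itemize}
\item For $k = 0$ we have $\omega(0) = 0$, so both quantities equal $1$.
\item For $k \neq 0$, Remark~\ref{rem:omega_bounds} gives $C_1^2 |k|^{2\alpha} \le \omega^2(k) \le C_2^2 |k|^{2\alpha}$.
\item Since $|k| \ge 1$, we also have $|k|^{2\alpha} \le (1+|k|^2)^\alpha \le 2^\alpha |k|^{2\alpha}$.
\end{itemize}
Combining these gives
\[
\min\{1, C_1^2\}\, 2^{-\alpha} (1+|k|^2)^\alpha \le 1 + \omega^2(k) \le (1 + C_2^2)(1+|k|^2)^\alpha \quad \text{for all } k \in \Z \, .
\]
Summing against $|\hat u_k|^2$ yields \eqref{eq:W_equiv_H_1} and \eqref{eq:W_equiv_H_2} simultaneously, together with the equality of the two spaces as sets.

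There is no serious obstacle here. The only points that need care are justifying the exchange of sum and integral, which holds by nonnegativity and Tonelli's theorem, and the uniform two-sided bound on $\omega(k)$ for all $k \neq 0$, not just asymptotically. The latter is exactly the content of Remark~\ref{rem:omega_bounds}: it follows from the limit \eqref{eq:omega_limit} together with the positivity of $\omega(k)$ for each fixed $k \neq 0$, which holds because $\chi_\delta = 1$ near the origin.
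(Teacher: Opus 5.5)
Your proof is correct, but it takes a different route from the paper's.

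\textbf{The paper's route.} It first restricts to $u \in C^\infty(\T^1)$ and proves the identity $-\int_{\T^1} K[u]\,u \,\d x = [u]^2_{\W^\alpha_\delta(\T^1)}$ by manipulating the principal-value integral (dominated convergence, Fubini, a symmetrization in $y$). Only then does it use Parseval and Lemma~\ref{lem:K_fourier} to reach $\sum_k \omega^2(k)|\hat u_k|^2$. Extending to general $u$ then needs two separate approximation arguments:
\begin{itemize}
\item Fourier truncation plus Fatou's lemma for \eqref{eq:W_equiv_H_1};
\item mollification plus Jensen's inequality, with a lower bound on $|\hat\rho_{\e,k}|$ for finitely many $k$, for \eqref{eq:W_equiv_H_2}.
\end{itemize}

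\textbf{Your route.} You apply Parseval directly to $x \mapsto u(x+y)-u(x)$ for each fixed $y$. You then use Tonelli, which is justified because every integrand is nonnegative ($\chi_\delta \ge 0$ and $1-\cos \ge 0$). This gives $[u]^2_{\W^\alpha_\delta(\T^1)} = \sum_k \omega^2(k)|\hat u_k|^2$ for every $u \in L^2(\T^1)$, with both sides allowed to be $+\infty$. Consequently:
\begin{itemize}
\item no density argument is needed;
\item both inequalities and the set equality of the two spaces follow at once;
\item you never touch the operator $K$ or principal values.
\end{itemize}

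\textbf{Comparison.} What the paper's route buys is the bilinear identity linking $K$ to the energy seminorm. That is the natural integration-by-parts structure behind the energy conservation and the weak formulation, but the proposition itself does not need it. Your argument is shorter and more robust for this statement. Your constants in the comparison $1+\omega^2(k) \sim (1+|k|^2)^\alpha$ check out, including the case $k=0$.
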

\begin{proof}
  Let $u \in C^\infty(\T^1)$.
  First of all, we show that
  \[ \label{eq:Kuu_uW}
  - \int_{\T^1} K[u](x) u(x) \, \d x = [u]_{\W^\alpha_\delta(\T^1)}^2 \, .
  \]
  Indeed, we have that
  \[
  \begin{split}
    - \int_{\T^1} K[u](x) u(x) \, \d x & = \int_{\T^1} \Big( \pvint_{\R} \chi_\delta(y) \frac{u(x) - u(x-y)}{|y|^{1+2\alpha}} \, \d y \Big) u(x) \d x \\
    & = \int_{\T^1} \lim_{\e \to 0^+} \Big( \int_{\{\e \leq |y|\}} \chi_\delta(y) \frac{u(x) - u(x-y)}{|y|^{1+2\alpha}} \d y \Big) u(x) \, \d x \, .
  \end{split}
  \]
  Note that, as in the proof of Lemma~\ref{lem:K_alternative}, we have that
  \[
  \begin{split}
    & \Big| \int_{\{\e \leq |y| \}} \chi_\delta(y) \frac{u(x) - u(x-y)}{|y|^{1+2\alpha}} \, \d y \Big| = \frac{1}{2} \Big|\int_{\{\e \leq |y| \}} \chi_\delta(y) \frac{u(x+y) + u(x-y) - 2 u(x)}{|y|^{1+2\alpha}} \, \d y \Big| \\
    & \leq C \int_{\{\e \leq |y| \leq \delta\}} \frac{\|u''\|_{L^\infty}}{|y|^{2\alpha - 1}} \, \d y \leq C  \, .
  \end{split}
  \]
  Hence, by the Dominated Convergence Theorem and the Fubini Theorem, we deduce that
  \[
  \begin{split}
    & - \int_{\T^1} K[u](x) u(x) \d x =  \lim_{\e \to 0^+} \int_{\{\e \leq |y|\}} \frac{\chi_\delta(y)}{|y|^{1+2\alpha}} \Big( \int_{\T^1} (u(x) - u(x-y)) u(x) \, \d x \Big) \, \d y \\
    & \quad = \lim_{\e \to 0^+} \int_{\{\e \leq |y| \}} \frac{\chi_\delta(y)}{|y|^{1+2\alpha}} \frac{1}{2}\Big( \int_{\T^1} (u(x) - u(x-y)) u(x) \d x - \int_{\T^1} (u(x) - u(x+y)) u(x+y) \, \d x \Big) \, \d y \\
    & \quad = \frac{1}{2}  \int_{\T^1} \lim_{\e \to 0^+}  \int_{\{\e \leq |y| \}} \chi_\delta(y) \Big( \frac{(u(x) - u(x-y)) u(x)}{|y|^{1+2\alpha}} - \frac{(u(x) - u(x+y)) u(x+y)}{|y|^{1+2\alpha}} \Big) \, \d y \, \d x \\
    & \quad = \frac{1}{2}  \int_{\T^1} \lim_{\e \to 0^+}  \int_{\{\e \leq |y| \}} \chi_\delta(y) \Big( \frac{(u(x) - u(x-y)) u(x)}{|y|^{1+2\alpha}} - \frac{(u(x) - u(x-y)) u(x-y)}{|y|^{1+2\alpha}} \Big) \, \d y \, \d x \\
    & \quad = \frac{1}{2}  \int_{\T^1} \lim_{\e \to 0^+}  \int_{\{\e \leq |y| \}} \chi_\delta(y) \frac{|u(x) - u(x-y)|^2}{|y|^{1+2\alpha}} \, \d y \, \d x \\
    & \quad = \frac{1}{2}  \int_{\T^1} \int_{\R} \chi_\delta(y) \frac{|u(x) - u(x-y)|^2}{|y|^{1+2\alpha}} \, \d y \, \d x = [u]_{\W^\alpha_\delta(\T^1)}^2 \, ,
  \end{split}
  \]
  which proves~\eqref{eq:Kuu_uW}.

  By Parseval's identity, by~\eqref{eq:Kuu_uW}, and by Lemma~\ref{lem:K_fourier} we deduce that
  \[
  [u]_{\W^\alpha_\delta(\T^1)}^2 = - \sum_{k \in \Z} \hat{K[u]}_k \overline{\hat u_k} = \sum_{k \in \Z} \omega^2(k) |\hat{u}_k|^2 \, ,
  \]
  whence
  \[
  \|u\|_{\W^\alpha_\delta(\T^1)}^2 = \sum_{k \in \Z} (1 + \omega^2(k)) |\hat{u}_k|^2 \, .
  \]
  By Remark~\ref{rem:omega_bounds}, we have that
  \[
  (1 + |k|^2)^\alpha \leq C(1 + \omega^2(k))  \quad \text{and} \quad (1 + \omega^2(k)) \leq C (1 + |k|^2)^\alpha \, ,
  \]
  for every $k \in \Z$ and for some constant $C > 0$ (depending on $\alpha$ and $\delta$).
  It directly follows that~\eqref{eq:W_equiv_H_1} and~\eqref{eq:W_equiv_H_2} hold true for $u \in C^\infty(\T^1)$.
  We extend the result to the whole spaces $H^{\alpha}(\T^1)$ and $\W^\alpha_\delta(\T^1)$ by a density argument, as explained next.

  Let us start with the proof of~\eqref{eq:W_equiv_H_1}.
  Let $u \in H^{\alpha}(\T^1)$ and let $u^n := \sum_{|k| \leq n} \hat{u}_k e^{i 2 \pi k x} \in C^\infty(\T^1)$.
  Then
  \[
  \|u - u^n\|_{H^{\alpha}(\T^1)}^2 = \sum_{|k| > n} (1 + |k|^2)^\alpha |\hat{u}_k|^2 \to 0 \, , \quad \text{as } n \to +\infty \, ,
  \]
  since $\sum_{k \in \Z}(1 + |k|^2)^\alpha |\hat{u}_k|^2 < +\infty$.
  In particular, (up to a subsequence) $u^n \to u$ pointwise a.e.\ in $\T^1$ and, by Fatou's lemma, we have that
  \[
  \|u\|_{\W^\alpha_\delta(\T^1)} \leq \liminf_{n \to +\infty} \|u^n\|_{\W^\alpha_\delta(\T^1)} \leq \lim_{n \to +\infty} C_2 \|u^n\|_{H^{\alpha}(\T^1)} = C_2 \|u\|_{H^{\alpha}(\T^1)} \, ,
  \]
  showing~\eqref{eq:W_equiv_H_1}.

  Let us prove~\eqref{eq:W_equiv_H_2}.
  Let $u \in \W^\alpha_\delta(\T^1)$ and let $\rho_\e$ be a family of mollifiers, \ie, $\rho_\e(x) = \frac{1}{\e} \rho\big(\frac{x}{\e}\big)$, where $\rho \in C^\infty_c(\R)$ with $\rho \geq 0$, $\supp \rho_\e \subset (-\frac{1}{2}, \frac{1}{2})$, and $\int_\R \rho(x) \d x = 1$.
  Extending $\rho_\e$ by 1-periodicity, we define $u^\e(x) = u * \rho_\e(x) = \int_{\T^1} u(y) \rho_\e(x-y) \d y \in C^\infty(\T^1)$.
  On the one hand, by Jensen's inequality, we have that
  \[ \label{eq:W_conv}
  \begin{split}
    [u^\e]_{\W^\alpha_\delta(\T^1)}^2 & = \frac{1}{2} \int_{\T^1} \int_{\R} \chi_\delta(y) \frac{|u^\e(x+y) - u^\e(x)|^2}{|y|^{1+2\alpha}} \, \d y \, \d x \\
    & = \frac{1}{2} \int_{\T^1} \int_{\R} \frac{\chi_\delta(y)}{|y|^{1+2\alpha}} \Big| \int_{\T^1} (u(z+y) - u(z)) \rho_\e(x-z) \d z \Big|^2 \, \d y \, \d x \\
    & \leq \frac{1}{2} \int_{\T^1} \int_{\R} \frac{\chi_\delta(y)}{|y|^{1+2\alpha}} \int_{\T^1} |u(z+y) - u(z)|^2 \rho_\e(x-z) \d z \, \d y \, \d x \\
    & = \frac{1}{2} \int_{\T^1} \int_{\R} \frac{\chi_\delta(y)}{|y|^{1+2\alpha}} \int_{\T^1} |u(z+y) - u(z)|^2 \rho_\e(x-z) \d x \, \d y \, \d z \\
    & = \frac{1}{2} \int_{\T^1} \int_{\R} \chi_\delta(y) \frac{|u(z+y) - u(z)|^2}{|y|^{1+2\alpha}} \, \d y \, \d z = [u]_{\W^\alpha_\delta(\T^1)}^2 \, .
  \end{split}
  \]
  On the other hand, we fix $n \in \N$ and, since $\hat{\rho}_{\e,k} \to 1$ as $\e \to 0$ for every $|k| \leq n$, we have that
  \[
  |\hat \rho_{\e,k}| \geq \frac{1}{2} \, , \quad \text{for every } |k| \leq n \text{ and } \e > 0 \text{ small enough} \, ,
  \]
  implying that $u^n = \sum_{|k| \leq n} \hat{u}_k e^{i 2 \pi k x} \in C^\infty(\T^1)$ satisfies
  \[
  \begin{split}
    \| u^n \|_{H^\alpha(\T^1)}^2 & = \sum_{|k| \leq n} (1 + |k|^2)^\alpha |\hat{u}_k|^2 \leq 4 \sum_{k \in \Z} (1 + |k|^2)^\alpha |\hat{\rho}_{\e,k}|^2 |\hat{u}_k|^2 \\
    & \leq C \sum_{k \in \Z} (1 + \omega^2(k)) |\hat{\rho}_{\e,k}|^2 |\hat{u}_k|^2 = C \|u^\e\|_{\W^\alpha_\delta(\T^1)}^2 \, ,
  \end{split}
  \]
  for $\e > 0$ small enough and some constant $C > 0$ (depending on $\alpha$ and $\delta$).
  Combining~\eqref{eq:W_conv} with the previous estimate and Young's inequality $\|u^\varepsilon\|_{L^2(\T^1)} \leq \|u\|_{L^2(\T^1)}$, we deduce that
  \[
  \| u^n \|_{H^\alpha(\T^1)} \leq C \|u\|_{\W^\alpha_\delta(\T^1)} \, , \quad \text{for every } n \in \N \, ,
  \]
  which proves~\eqref{eq:W_equiv_H_2} by letting $n \to +\infty$.

\end{proof}

Thanks to the previous proposition, we will identify the energy space $\W^\alpha_\delta(\T^1)$ with the fractional Sobolev space $H^{\alpha}(\T^1)$ in the rest of the paper.

\subsection{Well-posedness} We show here existence and uniqueness of solutions to the Cauchy problem~\eqref{eq:peridynamics} in the fractional Sobolev space $H^\alpha(\T^1)$.

Solutions are intended in the sense of the following definition.

\begin{definition} \label{def:distributional-solution}
    Let $u_0, v_0 \in L^1_{\mathrm{loc}}(\T^1)$.
    We say that $u \in L^1_{\mathrm{loc}}([0,+\infty) \times \T^1)$ is a \emph{distributional solution} to the Cauchy problem~\eqref{eq:peridynamics} if
    \[
        \begin{split}
        & \int_0^{+\infty} \int_{\T^1} u(t,x) \de_{tt} \varphi(t,x) \, \d x \, \d t - \int_0^{+\infty} \int_{\T^1} u(t,x) K[\varphi(t,\cdot)](x) \, \d x \, \d t\\
        & \quad  = - \int_{\T^1} u_0(x) \de_t \varphi(0,x) \, \d x + \int_{\T^1} v_0(x) \varphi(0,x) \, \d x
        \end{split}
    \]
for every $\varphi \in C^\infty_c([0,+\infty) \times \T^1)$.
\end{definition}

We start with existence, which is obtained by exhibiting an explicit solution formula via Fourier series, exploiting the representation of the peridynamic operator as a Fourier multiplier operator given in Lemma~\ref{lem:K_fourier}.
For well-posedness in a more general setting, we refer to~\cite{CocDipMadVal18}.

\begin{theorem} \label{thm:existence}
Let $\alpha \in (0,1)$ and let $\delta > 0$.
Let
\[
u_0(x) = \sum_{k \in \Z} \hat{u}_{0,k} e^{i 2 \pi k x} \in H^\alpha(\T^1)\, , \quad v_0(x) = \sum_{k \in \Z} \hat{v}_{0,k} e^{i 2 \pi k x} \in L^2(\T^1)
\]
be given initial data.
Let
\[ \label{eq:solution_formula}
u(t,x) = \sum_{k \in \Z} \hat u_k(t) e^{i 2 \pi k x} \, , \quad \text{where } \hat u_k(t) = \hat{u}_{0,k} \cos(\omega(k) t) + \hat{v}_{0,k}\frac{\sin(\omega(k) t)}{\omega(k)}  \, , \quad \text{for every } k \in \Z \, ,
\]
with the convention that $\frac{\sin(\omega(k) t)}{\omega(k)} = t$ for $k = 0$.

Then $u \in C([0,+\infty);H^\alpha(\T^1)) \cap C^1([0,+\infty);L^2(\T^1))$ and it is a distributional solution to the Cauchy problem~\eqref{eq:peridynamics}.
\end{theorem}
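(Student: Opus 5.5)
The proof of Theorem~\ref{thm:existence} splits into three steps: convergence and regularity of the series~\eqref{eq:solution_formula} in the energy spaces, continuity in time, and verification of the distributional identity in Definition~\ref{def:distributional-solution}. Throughout I use the Fourier characterization of $H^\alpha(\T^1)$ together with Remark~\ref{rem:omega_bounds}, that is, $1+\omega^2(k)\simeq (1+|k|^2)^\alpha$. I also use that $\omega(k)$ is real and even in $k$; evenness guarantees that $u$ is real-valued whenever $u_0,v_0$ are.

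\step{1}
For each $k$ and $t\ge 0$ I bound the coefficients by
\[
|\hat u_k(t)|\le |\hat u_{0,k}| + |\hat v_{0,k}|\,\min\{t,\omega(k)^{-1}\} .
\]
Combined with $(1+|k|^2)^{\alpha}\omega(k)^{-2}\le C$ for $k\neq 0$, this gives
\[
\sum_k (1+|k|^2)^\alpha |\hat u_k(t)|^2 \le C\big(\|u_0\|_{H^\alpha}^2 + (1+t^2)\|v_0\|_{L^2}^2\big),
\]
so $u(t)\in H^\alpha$ with a locally uniform bound in $t$. For the time derivative, $\hat u_k'(t)= -\omega(k)\hat u_{0,k}\sin(\omega(k)t)+\hat v_{0,k}\cos(\omega(k)t)$, and
\[
\sum_k|\hat u_k'(t)|^2\le C\big(\|u_0\|_{H^\alpha}^2+\|v_0\|_{L^2}^2\big).
\]

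\step{2}
For continuity I argue by dominated convergence on the series. Each coefficient $t\mapsto \hat u_k(t)$ is continuous, and the summands of $\|u(t)-u(s)\|_{H^\alpha}^2$ are dominated, for $t,s$ in a compact set, by the summable sequence from Step 1. Hence $u\in C([0,\infty);H^\alpha)$. The same argument applied to $\hat u_k'$ gives that $t\mapsto \sum_k\hat u_k'(t)e^{i2\pi kx}$ is continuous into $L^2$. To identify this with $\partial_t u$, I write
\[
\frac{\hat u_k(t+h)-\hat u_k(t)}{h}-\hat u_k'(t)=\frac1h\int_t^{t+h}\big(\hat u_k'(\tau)-\hat u_k'(t)\big)\,d\tau .
\]
Its squared modulus is bounded by $4\sup_\tau|\hat u_k'(\tau)|^2$, which is summable uniformly, and it tends to $0$ termwise. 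So the difference quotients converge in $L^2$, and $u\in C^1([0,\infty);L^2)$.

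\step{3}
Fix $\varphi\in C_c^\infty([0,\infty)\times\T^1)$ and expand it as $\varphi(t,x)=\sum_k\hat\varphi_k(t)e^{i2\pi kx}$. Since $\varphi$ is smooth, Lemma~\ref{lem:K_fourier} gives $\widehat{K[\varphi(t)]}_k=-\omega^2(k)\hat\varphi_k(t)$, and the coefficients $\hat\varphi_k$ decay rapidly, uniformly in $t$. By Parseval in $x$,
\[
\int_{\T^1}u\,\partial_{tt}\varphi-\int_{\T^1}u\,K[\varphi]
=\sum_k \hat u_k(t)\big(\overline{\hat\varphi_k''(t)}+\omega^2(k)\overline{\hat\varphi_k(t)}\big).
\]
Integrating in $t$ and exchanging sum and integral is legitimate: $|\hat u_k(t)|\le C(1+t)\big(|\hat u_{0,k}|+|\hat v_{0,k}|\big)$, which is square-summable, while the factor $\big(|\hat\varphi_k''|+\omega^2(k)|\hat\varphi_k|\big)$ decays rapidly and is supported in $t$ in a compact set.

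For each $k$, $\hat u_k$ is a smooth solution of $\hat u_k''+\omega^2(k)\hat u_k=0$ with $\hat u_k(0)=\hat u_{0,k}$ and $\hat u_k'(0)=\hat v_{0,k}$. Integrating by parts twice on $[0,\infty)$ gives
\[
\int_0^\infty \hat u_k\big(\overline{\hat\varphi_k''}+\omega^2\overline{\hat\varphi_k}\big)\,dt=-\hat u_{0,k}\overline{\hat\varphi_k'(0)}+\hat v_{0,k}\overline{\hat\varphi_k(0)}.
\]
Summing over $k$ and applying Parseval again yields exactly the right-hand side of Definition~\ref{def:distributional-solution}.

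The only delicate point is justifying the interchanges of sums, integrals and limits. That is handled by the summable bounds from Step 1 together with the rapid decay of $\hat\varphi_k$, so I expect no real obstacle.
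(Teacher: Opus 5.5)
Your proof is correct, and Steps 1 and 2 match the paper's. Both reduce continuity into $H^\alpha(\T^1)$ and $C^1$ regularity into $L^2(\T^1)$ to dominated convergence on the coefficient series, using $1+\omega^2(k)\simeq(1+|k|^2)^\alpha$. Your integral form of the difference quotient just makes explicit the dominating sequence $4\sup_\tau|\hat u_k'(\tau)|^2$, which the paper leaves implicit.

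The routes differ in Step 3. The paper never expands $\varphi$ in Fourier series. Instead it truncates to $u^n=\sum_{|k|\le n}\hat u_k(t)e^{i2\pi kx}$ and notes that these trigonometric polynomials are classical solutions with data $u_0^n$, $v_0^n$. It then passes to the limit in their weak formulations, using $u^n\to u$ in $H^\alpha(\T^1)$ and $\de_t u^n\to\de_t u$ in $L^2(\T^1)$ uniformly in $t$.

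Your argument is direct: Parseval and Fubini, followed by integrating by parts mode by mode against $\hat u_k''+\omega^2(k)\hat u_k=0$. It avoids the approximation entirely. It also makes explicit the symmetry $\int_{\T^1}u^n K[\varphi]\,\d x=\int_{\T^1}K[u^n]\varphi\,\d x$, which the paper uses tacitly when it goes from the strong equation for $u^n$ to its weak form. The paper's route, in turn, reuses the energy bounds already proved and needs no interchange of an infinite sum with the time integral.

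One detail to tidy. The definition pairs $u$ with $\de_{tt}\varphi$ and $K[\varphi]$ without conjugation. The identity $\int_{\T^1}u\,g\,\d x=\sum_k\hat u_k\overline{\hat g_k}$ that you use holds for real-valued $g$ but not in general, and the paper's own uniqueness proof uses complex test functions such as $\psi(t)e^{-i2\pi kx}$. So either reduce first to real-valued $\varphi$, using complex linearity of both sides, or replace $\overline{\hat\varphi_k}$ by $\hat\varphi_{-k}$ throughout. The latter is harmless because $\omega$ is even.
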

\begin{proof}
Let us show that $u \in C([0,+\infty);H^\alpha(\T^1)) \cap C^1([0,+\infty);L^2(\T^1))$.
First of all, by~\eqref{eq:solution_formula} we have that
\[
\begin{split}
  \|u(t,\cdot)\|_{H^\alpha(\T^1)} & \leq \Big( \sum_{k \in \Z} (1 + |k|^2)^\alpha |\hat u_{0,k}|^2 \Big)^\frac{1}{2} + \Big( \sum_{k \in \Z} (1 + |k|^2)^\alpha |\hat v_{0,k}|^2 \frac{\sin^2(\omega(k) t)}{\omega^2(k)} \Big)^\frac{1}{2} \, .
\end{split}
\]
The first term in the right-hand side is finite since $u_0 \in H^\alpha(\T^1)$.
As for the second term, by Lemma~\ref{lem:K_fourier} we have that
\[ \label{eq:velocity_bound}
\sum_{k \in \Z} (1 + |k|^2)^\alpha |\hat v_{0,k}|^2 \frac{\sin^2(\omega(k) t)}{\omega^2(k)} \leq |\hat v_{0,0}|^2 t^2 + C \sum_{|k| \geq 1} |\hat v_{0,k}|^2  \, ,
\]
which is finite since $v_0 \in L^2(\T^1)$.
To show that $u \in C([0,+\infty);H^\alpha(\T^1))$, we estimate
\[
\begin{split}
  & \|u(t,\cdot) - u(s,\cdot)\|_{H^\alpha(\T^1)} = \Big( \sum_{k \in \Z} (1 + |k|^2)^\alpha |\hat u_k(t) - \hat u_k(s)|^2 \Big)^\frac{1}{2} \\
  & \quad \leq \Big( \sum_{k \in \Z} (1 + |k|^2)^\alpha |\hat u_{0,k}|^2 |\cos(\omega(k) t) - \cos(\omega(k) s)|^2 \Big)^\frac{1}{2} \\
  & \quad \quad + \Big( \sum_{k \in \Z} (1 + |k|^2)^\alpha |\hat v_{0,k}|^2 \frac{|\sin(\omega(k) t) - \sin(\omega(k) s)|^2}{\omega^2(k)} \Big)^\frac{1}{2} \, ,
\end{split}
\]
which goes to $0$ as $t \to s$ by the Dominated Convergence Theorem, since $u_0 \in H^\alpha(\T^1)$ and $v_0 \in L^2(\T^1)$ and by Lemma~\ref{lem:K_fourier}.

Finally, to show that $u \in C^1([0,+\infty);L^2(\T^1))$, we let
\[ \label{eq:velocity_formula}
v(t,x) := \sum_{k \in \Z} \hat u_k'(t) e^{i 2 \pi k x} = \sum_{k \in \Z} \Big( - \omega(k) \hat{u}_{0,k} \sin(\omega(k) t) + \hat{v}_{0,k} \cos(\omega(k) t) \Big) e^{i 2 \pi k x} \, ,
\]
and we compute
\[
\begin{split}
  & \Big\|\frac{u(t,\cdot) - u(s,\cdot)}{t - s} - v(s) \Big\|_{L^2(\T^1)} = \Big( \sum_{k \in \Z} \Big| \frac{\hat u_k(t) - \hat u_k(s)}{t - s} - \hat v_k(s) \Big|^2 \Big)^\frac{1}{2} \\
  & \quad \leq \Big( \sum_{k \in \Z} \Big| \frac{\cos(\omega(k) t) - \cos(\omega(k) s)}{t - s} + \omega(k) \sin(\omega(k) s) \Big|^2 |\hat u_{0,k}|^2 \Big)^\frac{1}{2} \\
  & \quad \quad + \Big( \sum_{k \in \Z} \Big| \frac{\sin(\omega(k) t) - \sin(\omega(k) s)}{\omega(k)(t - s)} - \cos(\omega(k) s) \Big|^2 |\hat v_{0,k}|^2 \Big)^\frac{1}{2} \, ,
\end{split}
\]
which goes to $0$ as $t \to s$ by the Dominated Convergence Theorem since $u_0 \in H^\alpha(\T^1)$ and $v_0 \in L^2(\T^1)$ and by Lemma~\ref{lem:K_fourier}.
Continuity of $t \mapsto v(t) \in L^2(\T^1)$ is a consequence of the Dominated Convergence Theorem applied to~\eqref{eq:velocity_formula}.

Let us now prove that $u$ is a distributional solution, arguing by approximation with smooth solutions.
Let $n \in \N$ and let $u^n(t,x) = \sum_{|k| \leq n} \hat u_k(t) e^{i 2 \pi k x}$ with $\hat u_k(t)$ as in~\eqref{eq:solution_formula}.
Then $u^n$ is a smooth solution to the Cauchy problem~\eqref{eq:peridynamics} with initial data $u^n(0,x) = u_0^n := \sum_{|k| \leq n} \hat{u}_{0,k} e^{i 2 \pi k x}$ and $\de_t u^n(0,x) = v_0^n := \sum_{|k| \leq n} \hat{v}_{0,k} e^{i 2 \pi k x}$.
Indeed, the Fourier coefficients of $u^n$ (coinciding with those of $u$ for $|k| \leq n$) satisfy the ODEs
\[
\de_{tt} \hat u_k(t) + \omega^2(k) \hat u_k(t) = 0 \, , \quad \text{for } |k| \leq n \, .
\]
By Lemma~\ref{lem:K_fourier}, we have that
\[
\de_{tt} \hat u^n_k(t) - \hat{K[u^n(t,\cdot)]}_k = 0 \, , \quad \text{for } |k| \leq n \, ,
\]
which, after multiplying by $e^{i 2 \pi k x}$ and summing over $|k| \leq n$, yields that
\[
\de_{tt} u^n(t,x) - K[u^n(t,\cdot)](x) = 0 \, .
\]
The initial conditions $u^n(0,x) = u_0^n$ and $\de_t u^n(0,x) = v_0^n$ are satisfied by construction.
Let us now establish the convergence of $u^n$ to $u$.
Arguing as in~\eqref{eq:velocity_bound}, we have that
\[
\begin{split}
  & \|u^n(t,\cdot) - u(t,\cdot)\|_{H^\alpha(\T^1)} = \Big( \sum_{|k| > n} (1 + |k|^2)^\alpha |\hat u_k(t)|^2 \Big)^\frac{1}{2} \\
  & \quad \leq \Big( \sum_{|k| > n} (1 + |k|^2)^\alpha |\hat u_{0,k}|^2 \Big)^\frac{1}{2} + C \Big( \sum_{|k| > n} |\hat v_{0,k}|^2 \Big)^\frac{1}{2} \, ,
\end{split}
\]
which goes to $0$ as $n \to \infty$ uniformly in $t$, since $u_0 \in H^\alpha(\T^1)$ and $v_0 \in L^2(\T^1)$.
Similarly, by Lemma~\ref{lem:K_fourier} we have that
\[
\begin{split}
  & \|\de_t u^n(t,\cdot) - \de_t u(t,\cdot)\|_{L^2(\T^1)} = \Big( \sum_{|k| > n} |\hat u_k'(t)|^2 \Big)^\frac{1}{2} \\
  & \quad \leq \Big( \sum_{|k| > n} \omega^2(k)|\hat u_{0,k}|^2 \Big)^\frac{1}{2} + \Big( \sum_{|k| > n} |\hat v_{0,k}|^2 \Big)^\frac{1}{2} \leq C \Big( \sum_{|k| > n} (1 + |k|^2)^\alpha |\hat u_{0,k}|^2 \Big)^\frac{1}{2} + \Big( \sum_{|k| > n} |\hat v_{0,k}|^2 \Big)^\frac{1}{2} \, ,
\end{split}
\]
which goes to $0$ as $n \to \infty$ uniformly in $t$, since $u_0 \in H^\alpha(\T^1)$ and $v_0 \in L^2(\T^1)$.
These convergences are sufficient to pass to the limit in the weak formulation of the equation satisfied by $u^n$, which reads as
\[
\begin{split}
& \int_0^{+\infty} \int_{\T^1} u^n(t,x) \de_{tt} \varphi(t,x) \, \d x \, \d t - \int_0^{+\infty} \int_{\T^1} u^n(t,x) K[\varphi(t,\cdot)](x) \, \d x \, \d t\\
& \quad  = - \int_{\T^1} u_0^n(x) \de_t \varphi(0,x) \, \d x + \int_{\T^1} v_0^n(x) \varphi(0,x) \, \d x \, ,
\end{split}
\]
for every $\varphi \in C^\infty_c([0,+\infty) \times \T^1)$.
\end{proof}

In the next theorem, we show uniqueness of distributional solutions to the Cauchy problem~\eqref{eq:peridynamics} in the class $C([0,+\infty);H^\alpha(\T^1)) \cap C^1([0,+\infty);L^2(\T^1))$.

\begin{theorem} \label{thm:uniqueness}
Let $\alpha \in (0,1)$ and let $\delta > 0$.
Let $u \in C([0,+\infty);H^\alpha(\T^1)) \cap C^1([0,+\infty);L^2(\T^1))$ be a distributional solution to the Cauchy problem~\eqref{eq:peridynamics} with initial data $u_0 \in H^\alpha(\T^1)$ and $v_0 \in L^2(\T^1)$.
Then $u$ is the solution given by the formula~\eqref{eq:solution_formula}.
\end{theorem}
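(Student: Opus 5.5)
By linearity, it suffices to show that a distributional solution $w\in C([0,+\infty);H^\alpha(\T^1))\cap C^1([0,+\infty);L^2(\T^1))$ with zero initial data vanishes identically. Indeed, the difference between $u$ and the solution given by~\eqref{eq:solution_formula} (a distributional solution in the same class, by Theorem~\ref{thm:existence}) is such a $w$. I will test the weak formulation in Definition~\ref{def:distributional-solution} against separated functions $\varphi(t,x)=\eta(t)e^{-i2\pi kx}$ with $\eta\in C^\infty_c([0,+\infty))$, treating real and imaginary parts separately. This turns the weak formulation into a scalar ODE for each Fourier coefficient
\[
\hat w_k(t)=\int_{\T^1}w(t,x)e^{-i2\pi kx}\,\d x .
\]

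First, by Lemma~\ref{lem:K_fourier}, $K[\varphi(t,\cdot)](x)=-\omega^2(k)\eta(t)e^{-i2\pi kx}$. Here I use that $\omega$ is even, so the multiplier at frequency $-k$ equals that at $k$. Since $w(t,\cdot)\in L^2$ and $t\mapsto\hat w_k(t)$ is continuous (indeed $C^1$, because $w\in C^1([0,+\infty);L^2)$), Fubini gives
\[
\int_0^{+\infty}\hat w_k(t)\big(\eta''(t)+\omega^2(k)\eta(t)\big)\,\d t=0
\]
for every $\eta\in C^\infty_c([0,+\infty))$, where the right-hand side vanishes because the data are zero.

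Next I interpret this identity. Taking first $\eta\in C^\infty_c((0,+\infty))$, it says that $\hat w_k''+\omega^2(k)\hat w_k=0$ in $\mathcal D'((0,+\infty))$. By the standard fact that distributional solutions of linear constant-coefficient ODEs are classical, $\hat w_k(t)=a_k\cos(\omega(k)t)+b_k\sin(\omega(k)t)$, with $b_k\sin(\omega(k) t)$ replaced by $b_0t$ when $k=0$. Continuity of $w$ at $t=0$ in $L^2$ gives $\hat w_k(0)=\lim_{t\to0^+}\hat w_k(t)=0$, so $a_k=0$.

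To identify $b_k$, I return to general $\eta\in C^\infty_c([0,+\infty))$ and integrate by parts twice. Using the classical form of $\hat w_k$, the boundary terms give
\[
-\hat w_k(0)\eta'(0)+\hat w_k'(0)\eta(0)=0 .
\]
Choosing $\eta(0)=1$ yields $\hat w_k'(0)=0$, hence $b_k=0$. Alternatively, $C^1$-regularity in $L^2$ together with $\de_t w(0)=0$ gives the same conclusion directly. It follows that $\hat w_k\equiv0$ for every $k$, so $w(t,\cdot)=0$ in $L^2$ for every $t$.

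The only delicate point is the justification of the interchanges: passing $\int_{\T^1}$ inside so as to produce $\hat w_k(t)$, and the ODE/boundary-term argument. Both are elementary given the regularity class and the fact that $\varphi$ is smooth with a single Fourier mode. One should also check that complex-valued test functions are allowed; if they are not, one uses $\cos(2\pi kx)$ and $\sin(2\pi kx)$ instead, and the same computation applies.
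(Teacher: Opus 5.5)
Your proposal is correct and follows essentially the same route as the paper: you test against $\eta(t)e^{-i2\pi kx}$, reduce to the ODE $\hat w_k''+\omega^2(k)\hat w_k=0$ for each Fourier coefficient, and conclude by ODE uniqueness; the preliminary reduction to zero data by linearity is only a cosmetic difference. One small point: the definition of distributional solution does not by itself give $w(0,\cdot)=0$ or $\partial_t w(0,\cdot)=0$, so $\hat w_k(0)=0$ and $\hat w_k'(0)=0$ should both be read off from your boundary identity $-\hat w_k(0)\eta'(0)+\hat w_k'(0)\eta(0)=0$ (take $\eta(0)=0,\ \eta'(0)=1$, then $\eta(0)=1,\ \eta'(0)=0$), rather than from continuity at $t=0$.
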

\begin{proof}
First of all, note that
\[
\begin{split}
  K[e^{-i 2 \pi k \cdot}](x) & = \frac{1}{2} \int_{\R} \chi_\delta(y) \frac{e^{-i 2 \pi k (x+y)} + e^{-i 2 \pi k (x-y)} - 2 e^{-i 2 \pi k x}}{|y|^{1+2\alpha}} \, \d y \\
  &  = e^{-i 2 \pi k x} \frac{1}{2} \int_{\R} \chi_\delta(y) \frac{e^{-i 2 \pi k y} + e^{i 2 \pi k y} - 2}{|y|^{1+2\alpha}} \, \d y = - e^{-i 2 \pi k x} \omega^2(k) \, .
\end{split}
\]
Hence, using $\varphi(t,x) := \psi(t) e^{-i 2 \pi k x}$ with $\psi \in C^\infty_c([0,+\infty))$ in the definition of distributional solution, we obtain that
\[
\begin{split}
& \int_0^{+\infty} \int_{\T^1} u(t,x) \de_{tt} \varphi(t,x) \, \d x \, \d t - \int_0^{+\infty} \int_{\T^1} u(t,x) K[\varphi(t,\cdot)](x) \, \d x \, \d t\\
& \quad  = - \int_{\T^1} u_0(x) \de_t \varphi(0,x) \, \d x + \int_{\T^1} v_0(x) \varphi(0,x) \, \d x \\
& \implies  \int_0^{+\infty} \de_{tt}\psi(t) \hat u_k(t) \, \d t +  \int_0^{+\infty} \psi(t) \omega^2(k) \hat u_k(t) \, \d t = - \de_t \psi(0) \hat u_{0,k} + \psi(0) \hat v_{0,k} \, ,
\end{split}
\]
that is, $\hat u_k(t)$ is a distributional solution to the ODE
\[
\de_{tt} \hat u_k(t) + \omega^2(k) \hat u_k(t) = 0 \, ,
\]
with initial conditions $\hat u_{0,k}$ and $\hat v_{0,k}$.
Since $u \in C^1([0,+\infty);L^2(\T^1))$, we have $\hat u_k(t) \in C^1([0,+\infty))$ and thus, by a standard bootstrap argument, $\hat u_k$ is a classical solution to the above ODE.
Then the conclusion follows by uniqueness of solutions to the above ODE, which yields that $\hat u_k(t)$ is given by~\eqref{eq:solution_formula}.
\end{proof}

We conclude this section by observing energy conservation of the solutions.

\begin{theorem} \label{thm:energy_conservation}
Let $\alpha \in (0,1)$ and let $\delta > 0$.
Let $u \in C([0,+\infty);H^\alpha(\T^1)) \cap C^1([0,+\infty);L^2(\T^1))$ be the unique distributional solution to the Cauchy problem~\eqref{eq:peridynamics} with initial data $u_0 \in H^\alpha(\T^1)$ and $v_0 \in L^2(\T^1)$ provided by Theorems~\ref{thm:existence} and~\ref{thm:uniqueness}.
Then the energy
\[
\frac{1}{2} \|\de_t u(t,\cdot)\|_{L^2(\T^1)}^2 + \frac{1}{2} [u(t,\cdot)]_{\W^\alpha_\delta(\T^1)}^2
\]
is constant for $t \geq 0$.
\end{theorem}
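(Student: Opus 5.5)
The plan is to work entirely on the Fourier side, using the explicit solution formula~\eqref{eq:solution_formula} together with Parseval's identity. First express the energy in Fourier coefficients. For $t \geq 0$, the computation in the proof of Proposition~\ref{prop:W_equiv_H} gives
\[
[u(t,\cdot)]_{\W^\alpha_\delta(\T^1)}^2 = \sum_{k \in \Z} \omega^2(k) |\hat u_k(t)|^2 .
\]
That computation was carried out for smooth $u$, so I extend it to $u(t,\cdot) \in H^\alpha(\T^1)$. I would do this either by approximating with the truncations $u^n$ and passing to the limit in both sides (Fatou on the double integral in one direction, then the mollification argument for the reverse inequality), or by computing directly
\[
\int_{\T^1} |u(x+y)-u(x)|^2 \, \d x = \sum_k |e^{i2\pi k y}-1|^2 |\hat u_k|^2 = \sum_k 2(1-\cos(2\pi k y))|\hat u_k|^2
\]
and applying Tonelli in $y$. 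The Tonelli route is cleaner because it holds for any $L^2$ function, with all terms nonnegative. Likewise, since $u \in C^1([0,+\infty);L^2(\T^1))$ with derivative $v(t)$ given by~\eqref{eq:velocity_formula}, Parseval yields $\|\de_t u(t,\cdot)\|_{L^2}^2 = \sum_k |\hat u_k'(t)|^2$.

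Next comes a mode-by-mode conservation argument. For each $k$, the coefficient $\hat u_k(t) = \hat u_{0,k}\cos(\omega(k)t) + \hat v_{0,k}\sin(\omega(k)t)/\omega(k)$ solves $\hat u_k'' + \omega^2(k)\hat u_k = 0$. Hence
\[
E_k(t) := |\hat u_k'(t)|^2 + \omega^2(k)|\hat u_k(t)|^2
\]
satisfies $E_k'(t) = 2\,\mathrm{Re}\big(\overline{\hat u_k'}(\hat u_k'' + \omega^2(k)\hat u_k)\big) = 0$. Alternatively, one can expand directly: the cross terms cancel and $E_k(t) = |\hat v_{0,k}|^2 + \omega^2(k)|\hat u_{0,k}|^2$. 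For $k=0$, we have $\omega(0)=0$ and $\hat u_0(t) = \hat u_{0,0} + \hat v_{0,0} t$, so $E_0 = |\hat v_{0,0}|^2$ is constant as well.

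Summing over $k$, the energy equals $\frac12\sum_k E_k(t) = \frac12\sum_k \big(|\hat v_{0,k}|^2 + \omega^2(k)|\hat u_{0,k}|^2\big)$. This sum is finite by Remark~\ref{rem:omega_bounds}, since $u_0 \in H^\alpha$ and $v_0 \in L^2$, and it is independent of $t$. The only step needing care is justifying the Fourier representation of the seminorm for nonsmooth $u(t,\cdot)$, and the Tonelli computation above handles it directly.
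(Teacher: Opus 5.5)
Your proposal is correct and follows the paper's own proof. Both write the energy via Parseval as $\frac12\sum_k\big(|\hat u_k'(t)|^2+\omega^2(k)|\hat u_k(t)|^2\big)$ and expand the explicit formula~\eqref{eq:solution_formula} so the cross terms cancel, leaving the time-independent quantity $\frac12\sum_k\big(|\hat v_{0,k}|^2+\omega^2(k)|\hat u_{0,k}|^2\big)$. Your Tonelli derivation of $[u(t,\cdot)]^2_{\W^\alpha_\delta(\T^1)}=\sum_k\omega^2(k)|\hat u_k(t)|^2$ for arbitrary $H^\alpha$ functions is a useful addition: the paper uses this identity directly, although Proposition~\ref{prop:W_equiv_H} only establishes it for smooth $u$.
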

\begin{proof}
    This can be proved directly via the representation of the solution via Fourier series.
    Indeed, by Theorem~\ref{thm:existence} (see also~\eqref{eq:velocity_formula} within its proof), we have that
    \[
    \de_t u(t,x) = \sum_{k \in \Z} \Big( - \omega(k) \hat{u}_{0,k} \sin(\omega(k) t) + \hat{v}_{0,k} \cos(\omega(k) t) \Big) e^{i 2 \pi k x} \, .
    \]
    Hence, by Parseval's identity and by Lemma~\ref{lem:K_fourier}, we compute
    \[
    \begin{split}
        & \|\de_t u(t,\cdot)\|_{L^2(\T^1)}^2 + [u(t,\cdot)]_{\W^\alpha_\delta(\T^1)}^2 = \sum_{k \in \Z} \Big( |\hat u_k'(t)|^2 + \omega^2(k) |\hat u_k(t)|^2 \Big) \\
        & \quad = \sum_{k \in \Z} \Big( \Big| - \omega(k) \hat{u}_{0,k} \sin(\omega(k) t) + \hat{v}_{0,k} \cos(\omega(k) t) \Big|^2 +\omega^2(k) \Big| \hat{u}_{0,k} \cos(\omega(k) t) + \hat{v}_{0,k}\frac{\sin(\omega(k) t)}{\omega(k)} \Big|^2 \Big) \\
        & \quad = \sum_{k \in \Z} \Big( |\hat{v}_{0,k}|^2 + \omega^2(k) |\hat{u}_{0,k}|^2 \Big) = \|\de_t u(0,\cdot)\|_{L^2(\T^1)}^2 + [u(0,\cdot)]_{\W^\alpha_\delta(\T^1)}^2 \, ,
    \end{split}
    \]
    which shows that the energy is constantly equal to its initial value.
\end{proof}

\section{An example of dissolution of a jump discontinuity} \label{sec:dissolution}

This section is at the core of the main result of the paper.
Here, we provide an explicit example of an initial datum $u_0$ (with initial velocity $v_0 = 0$) for which a jump discontinuity dissolves instantaneously in the peridynamic evolution.
More precisely, we construct $u_0 \in H^\alpha(\T^1)$ with the following property: $u_0$ has one jump discontinuity, while the solution $u(t,\cdot)$ to the Cauchy problem~\eqref{eq:peridynamics} is continuous for $t > 0$.

The main results of this section hold true for $\alpha \in (0,\frac{1}{2})$.
The initial datum that we choose has a jump discontinuity and belongs to $H^\alpha(\T^1)$ only for $\alpha < \frac{1}{2}$.

\subsection{The initial data} We start by defining the promised initial data.
We let
\[ \label{eq:initial_data}
u_0(x) := \frac{1}{2} - x \, , \quad  v_0(x) := 0 \, , \quad \text{for } x \in [0,1) \, ,
\]
extended by 1-periodicity to the whole real line.
Note that $u_0$ has a jump discontinuity at $x = 0$ (identified with $x = 1$ in $\T^1$).

\begin{remark} \label{rem:u0}
  The Fourier coefficients of $u_0$ are given by
  \[
  \hat u_{0,k} = \int_{\T^1} u_0(x) e^{-i 2 \pi k x} \, \d x = \int_{0}^{1} \Big( \frac{1}{2} - x \Big) e^{-i 2 \pi k x} \, \d x \, .
  \]
  For $k = 0$, we have that $\hat u_{0,0} = 0$.
  For $k \neq 0$, integrating by parts we obtain that
  \[
  \hat u_{0,k} = - \int_0^1 x e^{-i 2 \pi k x} \, \d x = \Big[\frac{x}{i 2 \pi k} e^{-i 2 \pi k x} \Big]_0^1 - \frac{1}{i 2 \pi k} \int_0^1 e^{-i 2 \pi k x} \, \d x = \frac{1}{i 2 \pi k} \, .
  \]

  In particular, we have that $u_0 \in H^\alpha(\T^1)$ for every $\alpha \in (0,\frac{1}{2})$.
  Indeed,
  \[
  \|u_0\|_{H^\alpha(\T^1)}^2 = \sum_{k \in \Z} (1 + |k|^2)^\alpha |\hat u_{0,k}|^2 = \sum_{k \in \Z \setminus \{0\}} (1 + |k|^2)^\alpha \frac{1}{4\pi^2 k^2} \leq C \sum_{k = 1}^{+\infty} \frac{1}{k^{2 - 2\alpha}} < +\infty \, ,
  \]
  since $2 - 2\alpha > 1$.
\end{remark}

\subsection{Main result about the dissolution of the jump discontinuity}
The main result in this section is the following.

\begin{theorem} \label{thm:dissolution}
  Let $\alpha \in (0,\frac{1}{2})$.
  Let $u_0 \in H^\alpha(\T^1)$ and $v_0 \in L^2(\T^1)$ be as in~\eqref{eq:initial_data}.
  Let $u \in C([0,+\infty);H^\alpha(\T^1)) \cap C^1([0,+\infty);L^2(\T^1))$ be the unique solution to the Cauchy problem~\eqref{eq:peridynamics} with initial data $u_0$ and $v_0$ provided by Theorem~\ref{thm:existence}.
  Then $u(t,\cdot)$ is a continuous function for every $t > 0$.
  Moreover, it satisfies the bound 
 \begin{equation} \label{eq:L_infty_bound_solution}
     \| u(t,\cdot) \|_{L^\infty(\T^1)} \leq C t^{-\frac{1}{2}} \, , \quad \text{for every } t \in (0,1]\, .
 \end{equation} 
\end{theorem}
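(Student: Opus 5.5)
By Theorem~\ref{thm:existence} and Remark~\ref{rem:u0}, with $v_0=0$, the solution is
\[
u(t,x)=\sum_{k\neq 0}\frac{1}{i2\pi k}\cos(\omega(k)t)\,e^{i2\pi kx}
=\frac{1}{4\pi i}\sum_{\pm}\sum_{k\neq0}\frac{1}{k}\,e^{i2\pi kx\pm i\omega(k)t}.
\]
The plan is to show that, for each fixed $t>0$, each of the two series $S_\pm(t,x)=\sum_{k\neq0}\frac1k e^{i(2\pi kx\pm\omega(k)t)}$ converges uniformly in $x\in\T^1$. Its partial sums are trigonometric polynomials, hence continuous, so uniform convergence makes $u(t,\cdot)$ continuous; it also identifies the limit with the $L^2$ limit. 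Since $\omega$ is even, pairing $k$ with $-k$ gives $\sum_{k\ge1}\frac{2i}{k}\sin(2\pi kx)\,e^{\pm i\omega(k)t}$. So it suffices to treat $\sum_{k\ge1}\frac1k e^{i\phi_\pm(k)}$ with phase $\phi_\pm(\xi)=2\pi\xi x\pm\omega(\xi)t$. By Abel summation, uniform convergence and bounds follow from uniform bounds on the exponential sums $\sum_{k=M}^{N} e^{i\phi(k)}$ on dyadic blocks.

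The key input is Lemma~\ref{lem:omega_prime}. Since $\omega'(\xi)\approx\alpha\gamma\xi^{\alpha-1}$, the derivative $\phi'(\xi)=2\pi x\pm t\omega'(\xi)$ is monotone for large $\xi$, up to the error $C\xi^{-\alpha-1}$. I would split the frequencies into three ranges.
\begin{enumerate}
\item Low frequencies, $k\le N_0(t)$ with $N_0\sim t^{-1/\alpha}$ (or a fixed threshold once $t$ is fixed). Bound these terms trivially or via the classical sawtooth partial sums $\sum_{k\le N}\frac{\sin 2\pi kx}{k}$, which are uniformly bounded, combined with Abel summation against $e^{\pm i\omega(k)t}$, whose increments are $\lesssim t k^{\alpha-1}$. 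This yields a bound of order $1+tN_0^\alpha$.
\item Frequencies where $\phi'$ stays away from $2\pi\Z$. Here Kuzmin--Landau, i.e.\ the first-derivative van der Corput test applied via Euler--Maclaurin to monotone $\phi'$, gives block sums $\lesssim 1/\operatorname{dist}(\phi'/2\pi,\Z)$.
\item Frequencies near stationary points, where $\phi'(\xi)\in 2\pi\Z$. Here I use the second-derivative test: $|\phi''(\xi)|\approx t\alpha(1-\alpha)\gamma\xi^{\alpha-2}$, so a dyadic block $[N,2N]$ has sum $\lesssim N(tN^{\alpha-2})^{1/2}+(tN^{\alpha-2})^{-1/2}$. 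After Abel summation with weight $1/N$, this contributes roughly $t^{1/2}N^{\alpha/2-1}\cdot N+t^{-1/2}N^{1-\alpha/2}\cdot N^{-1}$.
\end{enumerate}

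The main obstacle is that the naive second-derivative bound, summed over all dyadic blocks, does not obviously converge. The first term grows like $N^{\alpha/2}$. The fix is that, as $\xi\to\infty$, $\phi'(\xi)\to2\pi x$, so the total variation of $\phi'/2\pi$ on $[N,\infty)$ is $\lesssim tN^{\alpha-1}$, which is less than $1$ for large $N$. Hence for $N\ge N_1(t)\sim t^{1/(1-\alpha)}$ at most one integer $m$ (namely the nearest integer to $x$) can be hit, and a stationary point exists only if $x$ is close to $m$.

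Shifting $x$ by $m$ (the exponentials are $1$-periodic in $x$), I can assume $m=0$ and $|x|\lesssim tN^{\alpha-1}$. Then a stationary point $\xi_*$ satisfies $2\pi|x|\approx t\alpha\gamma\xi_*^{\alpha-1}$. I apply the second-derivative test only on the blocks within a bounded factor of $\xi_*$, where the weight $1/\xi_*$ times the block bound gives $\lesssim (t\xi_*^{\alpha})^{1/2}\cdot\xi_*^{-\alpha}\cdots$, and I would check that this is $O(t^{-1/2})$ uniformly. Away from $\xi_*$ I use Kuzmin--Landau; the resulting geometric sums in the dyadic scale converge. The error term $C\xi^{-\alpha-1}t$ in Lemma~\ref{lem:omega_prime} only perturbs $\phi'$ and $\phi''$ by lower-order amounts for large $\xi$, and can be absorbed by enlarging $N_1$.

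Collecting the three ranges gives $\sup_x\big|\sum_{k>N}\frac1k e^{i\phi_\pm(k)}\big|\to0$ as $N\to\infty$, which is the uniform Cauchy property and hence continuity. Tracking the dependence on $t\in(0,1]$ gives $\|u(t,\cdot)\|_{L^\infty}\le Ct^{-1/2}$, with the stationary-phase block as the dominant contribution. I expect that bookkeeping, i.e.\ making the stationary-phase estimate uniform in $x$ near the jump, to be the delicate step.
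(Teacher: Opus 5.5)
\textbf{There is a genuine gap in your steps (2) and (3).} Two tools are applied directly to the true phase $\phi_\pm(\xi)=2\pi\xi x\pm\omega(\xi)t$:
\begin{itemize}
\item Kuzmin--Landau, in the Euler--Maclaurin form you invoke, needs $\phi_\pm'$ to be monotone.
\item The second-derivative test needs $|\phi_\pm''|\ge\lambda>0$ with a fixed sign.
\end{itemize}
You assert $|\phi_\pm''|\approx t\alpha(1-\alpha)\gamma\xi^{\alpha-2}$, and that the error in Lemma~\ref{lem:omega_prime} ``only perturbs $\phi'$ and $\phi''$ by lower-order amounts''. That lemma controls $\omega'$ only, up to an error $C\xi^{-\alpha-1}$. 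This error is much larger than the variation $\approx\xi^{\alpha-2}$ of the main term $\alpha\gamma\xi^{\alpha-1}$ over a unit interval. So the lemma gives neither monotonicity of $\phi_\pm'$ nor any information on $\phi_\pm''$.

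The claim is in fact false in the paper's model case, where $\chi_\delta$ is the indicator function of $[-\delta,\delta]$. Integrating by parts in $(\omega^2)''(\xi)=8\pi^2\int_0^\delta y^{1-2\alpha}\cos(2\pi\xi y)\,\d y$ gives $(\omega^2)''(\xi)=4\pi\delta^{1-2\alpha}\xi^{-1}\sin(2\pi\delta\xi)+O(\xi^{2\alpha-2})$. Hence $\omega''(\xi)=c\,\xi^{-1-\alpha}\sin(2\pi\delta\xi)+o(\xi^{-1-\alpha})$ with $c\neq0$. Since $\alpha<\frac12$, this oscillating term dominates $\xi^{\alpha-2}$. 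Consequently $\omega''$ changes sign infinitely often, $\phi_\pm'$ is not eventually monotone, and $\phi_\pm''$ vanishes on every sufficiently long block. Neither van der Corput test applies to the true phase, and enlarging $N_1$ does not cure this.

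\textbf{The missing idea is never to differentiate $\omega$ twice.} This is how the paper proceeds. Factor $e^{i\phi(k)}=e^{i\rho(k)}e^{i\bar\phi(k)}$, with the model phase $\bar\phi(\xi)=2\pi\xi x\pm\gamma\xi^\alpha t$ and $\rho=\phi-\bar\phi$.
\begin{itemize}
\item All exponential-sum estimates are done for $\bar\phi$, whose derivatives are explicit. For $\xi\ge N_0\sim t^{1/(1-\alpha)}$ one has $|\de_\xi\bar\phi|\le\frac32\pi<2\pi$, so the sum is within $O(1)$ of the integral; this is your ``only one integer is hit'' observation.
\item The integral is then bounded using $|\de_{\xi\xi}\bar\phi|\ge c\,tN^{\alpha-2}$. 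This gives $|s(k)|=|\sum_{h\le k}e^{i\bar\phi(h)}|\le C(1+t^{\frac{1}{1-\alpha}}+t^{-\frac12}k^{1-\frac{\alpha}{2}})$.
\item The factor $\frac1k e^{i\rho(k)}$ is treated as an Abel weight, and here Lemma~\ref{lem:omega_prime} is exactly what is needed. The bound $|\de_\xi\rho|\le Ct\xi^{-\alpha-1}$ makes the weight increments $\le C(1+t)k^{-2}$.
\item For $t\in(0,1]$ this gives $\sum_k k^{-2}|s(k)|\lesssim 1+t^{-\frac12}\sum_k k^{-1-\frac{\alpha}{2}}$. Uniform convergence and the bound $Ct^{-1/2}$ follow at once, with no stationary-point bookkeeping.
\end{itemize}
Your low-frequency step is fine as it stands: the sawtooth Abel argument for $k\lesssim t^{-1/\alpha}$ uses only $|\omega'|\lesssim\xi^{\alpha-1}$. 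With the factorization inserted, the rest of your scheme can also be made to work, applying the first- and second-derivative tests to $\bar\phi$ on dyadic blocks. As written, however, steps (2)--(3) rely on a property of $\omega''$ that is neither available nor true in general.
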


The proof of the previous result is postponed to Subsection~\ref{subsec:proof-dissolution}, after proving a series of technical preparatory results.

\subsection{Results on oscillatory series} \label{subsec:technical}
The following theorem is the key ingredient in the proof of Theorem~\ref{thm:dissolution}.

\begin{theorem} \label{thm:series_convergence}
  Let $\alpha \in (0,1)$.
  Let $t > 0$.
  Then the series
  \[
  \sum_{k \in \Z \setminus \{0\}} \frac{1}{k} e^{i 2\pi k x + i \omega(k) t}
  \]
  converges uniformly in $x \in \big[-\frac{1}{2},\frac{1}{2}\big]$ to a continuous function.
  Moreover,
  \begin{equation} \label{eq:series_L_infty_bound}
  \sup_{x \in [-\frac{1}{2},\frac{1}{2}]} \Big| \sum_{k \in \Z \setminus \{0\}} \frac{1}{k} e^{i 2\pi k x + i \omega(k) t} \Big| \leq C t^{-\frac{1}{2}} \, , \quad \text{for every } t \in (0,1] \, .
\end{equation}
\end{theorem}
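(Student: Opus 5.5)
Since $\omega$ is even, the symmetric pairing $k$ with $-k$ gives
\[
\frac{1}{k}\big(e^{i2\pi kx}-e^{-i2\pi kx}\big)e^{i\omega(k)t}=\frac{2i\sin(2\pi kx)}{k}e^{i\omega(k)t}.
\]
It is therefore enough to study the one-sided sums
\[
S^\pm_N(x)=\sum_{k=1}^N \frac{1}{k}\, e^{i\phi_\pm(k)},\qquad \phi_\pm(\xi):=\pm 2\pi \xi x+\omega(\xi)t,
\]
and to show that $S^\pm_N(x)$ is uniformly Cauchy in $x\in[-\frac12,\frac12]$, with $\sup_{N,x}|S_N^\pm(x)|\le Ct^{-1/2}$. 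Continuity of the limit then follows because each partial sum is continuous. The tool will be summation by parts combined with Van der Corput-type bounds on the exponential sums $E(a,b)=\sum_{a<k\le b}e^{i\phi_\pm(k)}$. The relevant information on the phase is supplied by Lemma~\ref{lem:omega_prime}: $\phi'_\pm(\xi)=\pm2\pi x+\omega'(\xi)t$ with $\omega'(\xi)\approx \alpha\gamma\xi^{\alpha-1}$, which is decreasing and tends to $0$. Hence $\phi'$ ranges over a small set at high frequency, and the only dangerous region is a stationary-phase zone where $\phi'_\pm(\xi)\in 2\pi\Z$, i.e.\ $\omega'(\xi)t\approx \mp 2\pi x$ modulo $2\pi$.

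\textbf{Non-stationary regime (Kusmin--Landau).} On a block $(a,b]$ where $\phi'$ is monotone and $\|\phi'/2\pi\|\ge \lambda$ (distance to the nearest integer), Kusmin--Landau gives $|E(a,b)|\le C/\lambda$. For large $k$, $\phi''=\omega''t$, and the monotonicity of $\omega'$ has to be checked. Lemma~\ref{lem:omega_prime} only controls $\omega'$, not $\omega''$, so I would either verify monotonicity directly from the formula $\omega=\xi^\alpha I^{1/2}$ or, more robustly, use the first-derivative test in the form: Euler--Maclaurin/Poisson applied to $\sum e^{i\phi(k)}/k$ compared with $\int e^{i\phi(\xi)}/\xi\,d\xi$. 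Then integrate by parts, using only $|\phi'|\gtrsim\lambda$ and the bounded variation of $\phi'$, which is controlled by $t\sup|\omega'|$ on dyadic blocks. Summation by parts with weights $1/k$ then gives tails $\sum_{k>M}$ bounded by $C/(M\lambda)$. This gives uniform convergence as long as $x$ stays away from the stationary set.

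\textbf{Stationary regime (second-derivative test).} Near $x$ with $2\pi|x|$ comparable to $\omega'(\xi)t\sim t\xi^{\alpha-1}$, the stationary point sits at $\xi_*\sim (t/|x|)^{1/(1-\alpha)}$. There $|\phi''|\sim t\xi^{\alpha-2}$, and the second-derivative test gives $|E(a,b)|\lesssim (b-a)\sqrt{t\,a^{\alpha-2}}+(t a^{\alpha-2})^{-1/2}$. On a dyadic block $[K,2K]$ this becomes $\lesssim \sqrt{t}K^{\alpha/2}+t^{-1/2}K^{1-\alpha/2}$. After the $1/K$ weight, the contribution is $\lesssim \sqrt t K^{\alpha/2-1}+t^{-1/2}K^{-\alpha/2}$. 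Since $\phi'$ only moves by $\lesssim tK^{\alpha-1}\ll1$ across high blocks, only one or two dyadic blocks around $\xi_*$ meet the stationary zone; all other blocks are non-stationary and are handled by the previous step. Summing the geometric series gives a bound uniform in $x$, with the worst size coming from small $K$ and scaling like $t^{-1/2}$. The same analysis also covers $x$ near $0$: the pairing at the start turns the $1/k$ sum into the bounded Fourier series of the sawtooth, and because $t\omega(k)$ is slowly varying, Abel summation against the bounded partial sums of $\sum\sin(2\pi kx)/k$ produces only a $t\,\omega(k)$-variation error.

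\textbf{Main obstacle.} The hardest step is the uniformity of these estimates in $x$, and getting exactly the $t^{-1/2}$ rate. This requires controlling $\phi''$, i.e.\ lower and upper bounds $|\omega''(\xi)|\sim\xi^{\alpha-2}$ for large $\xi$, which are not stated earlier. I would first try to derive them by differentiating $\omega^2=\xi^{2\alpha}I(\xi)$ twice, mimicking the proof of Lemma~\ref{lem:omega_prime}. This needs $I''=O(\xi^{-2-2\alpha})$, which uses the smoothness of $\chi$ on $[-1,1]$; the boundary term $\chi(1)(1-\cos 2\pi\delta\xi)$ has derivative of size $\xi^{-1-2\alpha}$ times $\xi$, which might spoil the bound. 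If it does, I would fall back on a first-derivative-only argument: van der Corput's $A$-process, applied with the decomposition $\omega=\gamma|\xi|^\alpha+r(\xi)$, where by Lemma~\ref{lem:omega_prime} $r'=O(\xi^{-1-\alpha})$ is summable. This allows treating $e^{irt}$ as a bounded-variation weight by Abel summation, and reduces everything to the explicit phase $\gamma t\xi^\alpha$, whose derivatives are known exactly.
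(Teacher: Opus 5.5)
Your primary route has a genuine gap, and the worry you raise at the end is justified: the boundary term does spoil it. When $\chi(1)\neq 0$, as for the indicator of $[-1,1]$ (the paper's model case), differentiating the formula for $I'$ in the proof of Lemma~\ref{lem:omega_prime} gives $I''(\xi)=4\pi\chi(1)\delta^{1-2\alpha}\xi^{-1-2\alpha}\sin(2\pi\delta\xi)+O(\xi^{-2-2\alpha})$. Hence
\[
\omega''(\xi)=\frac{2\pi\chi(1)\delta^{1-2\alpha}}{\gamma}\,\xi^{-1-\alpha}\sin(2\pi\delta\xi)+o(\xi^{-1-\alpha})\quad\text{as }\xi\to+\infty ,
\]
because the ``expected'' term $\alpha(\alpha-1)\gamma\xi^{\alpha-2}$ is $o(\xi^{-1-\alpha})$ exactly when $\alpha<\frac12$. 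So for large $\xi$, $\omega''$ takes both signs in every interval of length $1/\delta$, and $\phi_\pm'$ is not monotone. More seriously, there is no lower bound $|\phi_\pm''|\gtrsim t\xi^{\alpha-2}$. The second-derivative test is your only tool for the stationary zone, and it cannot be applied to $\phi_\pm$. Your fallback is therefore not optional: it is the proof, and it is exactly the paper's route.

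Once you commit to it, the argument closes as follows. Write $\phi=\bar\phi+\rho$ with $\bar\phi(\xi)=2\pi\xi x+\gamma t\xi^\alpha$. Lemma~\ref{lem:omega_prime} gives $|\partial_\xi\rho|\le Ct\xi^{-1-\alpha}$, hence $\bigl|\tfrac{1}{k+1}e^{i\rho(k+1)}-\tfrac1k e^{i\rho(k)}\bigr|\le C(1+t)k^{-2}$. Partial summation against $s(k)=\sum_{h\le k}e^{i\bar\phi(h)}$ then reduces both the uniform convergence and the $t^{-1/2}$ bound to an $x$-uniform estimate $|s(k)|\le C_t k^{1-\alpha/2}$, with $C_t\lesssim t^{-1/2}$ for $t\le1$. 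This is plain Abel summation; the $A$-process (Weyl differencing) plays no role.

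For $\bar\phi$ your second-derivative computation is legitimate, since $\bar\phi''=\alpha(\alpha-1)\gamma t\xi^{\alpha-2}$ exactly. Apply it on \emph{every} dyadic block, wherever the stationary point lies. This gives $|s(k)|\lesssim\sqrt t\,k^{\alpha/2}+t^{-1/2}k^{1-\alpha/2}$, hence $\sum_k(1+t)k^{-2}|s(k)|\lesssim t^{-1/2}$ and $s(N)/N\to0$, uniformly in $x$. So you can drop the stationary/non-stationary bookkeeping and the separate treatment of $x$ near $0$. The latter is wrong as written anyway: $\sum_{k\le N}|e^{i\omega(k+1)t}-e^{i\omega(k)t}|\approx t\,\omega(N)\to\infty$, so Abel summation against the sawtooth partial sums only controls $k\lesssim t^{-1/\alpha}$.

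The paper obtains the bound on $s(k)$ without quoting van der Corput's lemma for sums. Past $N_0\sim t^{1/(1-\alpha)}$ one has $|\partial_\xi\bar\phi|\le\frac32\pi<2\pi$, so $\sum_{k=N_0}^N e^{i\bar\phi(k)}$ differs from $\int_{N_0}^N e^{i\bar\phi}\,\mathrm{d}\xi$ by $O(1)$. The integral is then bounded by $Ct^{-1/2}N^{1-\alpha/2}$ through a second-derivative argument on all of $[N_0,N]$, using $|\bar\phi''|\ge c\,tN^{\alpha-2}$ and no dyadic decomposition.
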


The proof of Theorem~\ref{thm:series_convergence} is postponed, since it relies on a series of lemmata in the spirit of van der Corput's method for oscillatory integrals.
Some of the lemmata are standard, while others are more specific to the peridynamic setting.
We provide all the details of the proof for the sake of completeness, referring to~\cite[Chapter 5]{Zyg59} for a classical discussion on the topic.

\vspace{1em}

In the next steps, it is convenient to introduce the notation
\[
\phi(\xi,x,t) := 2\pi \xi x + \omega(\xi) t \, , \quad \text{for } \xi \in \R \, , \ x \in \Big[-\frac{1}{2}, \frac{1}{2}\Big] \, , \ t > 0 \, .
\]
Note that the interval $[-\frac{1}{2}, \frac{1}{2}]$ is chosen as a representative of $\T^1$.

We also introduce the notation
\[
\bar \phi(\xi,x,t) := 2\pi \xi x + \gamma |\xi|^\alpha t \, , \quad \text{for } \xi \in \R \, , \ x \in \Big[-\frac{1}{2}, \frac{1}{2}\Big] \, , \ t > 0 \, .
\]

In the next lemmata, we study the oscillatory series
\[
\sum_{k=1}^{+\infty} \frac{1}{k} e^{i \phi(k,x,t)} \, , \quad \text{and} \quad \sum_{k=1}^{+\infty} \frac{1}{k} e^{i \bar \phi(k,x,t)} \, ,  \quad \text{for } t > 0 \, , \ x \in \Big[-\frac{1}{2}, \frac{1}{2}\Big] \, .
\]
Notice that the stated results hold true for the series over negative integers.

\vspace{1em}

The first step is to compare an oscillatory series involving $\bar \phi$ with the corresponding oscillatory integral, which is easier to treat.
The precise statement is the following.

\begin{lemma} \label{lem:series_integral_comparison}
  Let $\alpha \in (0,1)$.
  Then there exists $C > 0$ with the following property:
  For every $t > 0$ there exists $N_0 \in \N$, $N_0 \geq 1$ such that
  \[
  \Big| \sum_{k = N_0}^N e^{i \bar \phi(k,x,t)} - \int_{N_0}^N e^{i \bar \phi(\xi,x,t)} \, \d \xi \Big| \leq C  \, ,
  \]
  for every $N \geq N_0$ and every $x \in \big[-\frac{1}{2},\frac{1}{2}\big]$.
  Moreover, $N_0 \sim t^{\frac{1}{1-\alpha}}$ for $t$ large.
\end{lemma}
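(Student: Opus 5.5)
This is the classical van der Corput comparison lemma (Kusmin–Landau / Zygmund, Chapter~5): if the phase derivative, divided by $2\pi$, is monotone and stays inside an interval of length less than one, then the exponential sum and the integral differ by a bounded amount. The plan is to verify these hypotheses for $\bar\phi(\cdot,x,t)$ on $[N_0,+\infty)$ with a suitable choice of $N_0$, and then run the usual Poisson-summation / Euler--Maclaurin argument.

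\emph{Step 1: the phase derivative.} For $\xi>0$ we have
\[
\bar\phi'(\xi)=2\pi x+\alpha\gamma t\,\xi^{\alpha-1},
\]
which is strictly decreasing in $\xi$ because $\alpha<1$. We choose $N_0\ge1$ to be the smallest integer with $\alpha\gamma t N_0^{\alpha-1}\le \pi/2$, that is, $N_0=\max\{1,\lceil (2\alpha\gamma t/\pi)^{\frac{1}{1-\alpha}}\rceil\}$. This gives $N_0\sim t^{\frac1{1-\alpha}}$ for large $t$, as claimed. For $\xi\ge N_0$ and $|x|\le\frac12$,
\[
f'(\xi):=\frac{\bar\phi'(\xi)}{2\pi}\in \Big[x,\ x+\tfrac14\Big]\subset\Big[-\tfrac12,\tfrac34\Big].
\]
So $f'$ is monotone and ranges over an interval of length at most $\frac14$. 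Such an interval contains at most one integer $m\in\{-1,0\}$ in its interior, or sits near one.

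\emph{Step 2: the comparison.} We write $e^{i\bar\phi}=e^{2\pi i f}$. Two routes are available.

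\begin{itemize}
\item By the truncated Poisson formula,
\[
\sum_{k=N_0}^N e^{2\pi i f(k)}=\sum_{m\in\Z}\int_{N_0}^N e^{2\pi i (f(\xi)-m\xi)}\,\d\xi+O(1).
\]
Here the endpoint halves contribute $O(1)$, and the series is understood as a symmetric limit of partial sums.
\item More directly, we use Zygmund's Lemma~(4.4), Chapter~V: if $f'$ is monotone and $|f'|\le\theta<1$ on $[a,b]$, then
\[
\Big|\sum_{a<k\le b}e^{2\pi i f(k)}-\int_a^b e^{2\pi i f}\Big|\le C_\theta.
\]
To apply it we shift the phase, $\tilde f(\xi)=f(\xi)-m\xi$ with $m\in\Z$ chosen so that $\tilde f'\in[-\frac34,\frac34]$. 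Since $x\in[-\frac12,\frac12]$ and $f'\in[x,x+\frac14]$, the value $m=0$ already gives $|f'|\le\frac34$. Because $e^{2\pi i m k}=1$ for integer $k$, the sum is unchanged by the shift.
\end{itemize}

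With $m=0$ the integral in the second route is exactly the integral in the statement, so we conclude directly with $\theta=\frac34$.

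\emph{Step 3: proof of the Zygmund-type lemma.} The proof I would reproduce runs as follows. Euler--Maclaurin with the sawtooth $\psi(\xi)=\{\xi\}-\frac12$ gives
\[
\sum_{k}g(k)-\int g=O(|g|_\infty)+\int \psi\, g'.
\]
Then expand $\psi(\xi)=-\sum_{n\ne0}\frac{e^{2\pi i n\xi}}{2\pi i n}$ and integrate by parts once more. This reduces the error to terms of the form
\[
\sum_{n\neq0}\frac1n\int_{N_0}^N f'(\xi)\,e^{2\pi i (f(\xi)+n\xi)}\,\d\xi.
\]
Each of these integrals is treated with the first-derivative (Kusmin--Landau) test. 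Writing the integrand as
\[
\frac{f'}{f'+n}\,\frac{\d}{\d\xi}\,\frac{e^{2\pi i(f+n\xi)}}{2\pi i},
\]
the factor $f'/(f'+n)$ is monotone and bounded by $C/|n|$, because $|f'+n|\ge |n|-\frac34\ge \frac{|n|}4$. Hence each integral is $O(1/|n|)$, and summing over $n$ gives $\sum_n n^{-2}<\infty$, uniformly in $N$, $x$ and $t$.

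\emph{Main obstacle.} The delicate point is justifying the interchange of the Fourier expansion of $\psi$ with the integral, since that series converges only conditionally. I would handle it by working with symmetric partial sums of $\psi$ (which are uniformly bounded) together with dominated convergence in the $L^2$ sense on the bounded interval $[N_0,N]$. Constants must not depend on $N$, and this is guaranteed because only monotonicity and the bound on $f'$ are used.
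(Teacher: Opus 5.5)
Your proposal is correct and follows essentially the same route as the paper. Both arguments use the same choice $N_0=\lceil (2\alpha\gamma t/\pi)^{\frac{1}{1-\alpha}}\rceil$, which guarantees $|\partial_\xi\bar\phi|\le \frac{3}{2}\pi$, then sum by parts against the sawtooth $\xi-\lfloor\xi\rfloor$, expand it in Fourier series (interchanged with the integral via $L^2$ convergence), and integrate by parts with the monotone factor $\partial_\xi\bar\phi/(2\pi k+\partial_\xi\bar\phi)$, bounded by $3/|k|$ and summed through $\sum_k k^{-2}$; the only difference is presentational, since you package this as Zygmund's comparison lemma with $\theta=\frac34$ and then reprove it, while the paper carries out the computation directly for $\bar\phi$.
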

\begin{proof}
    We follow the proof as proposed in~\cite{Fos05}, providing all the details for the sake of completeness.
    Let
    \[ \label{eq:N0_choice}
    N_0 = \Big\lceil  \Big(\frac{2\alpha \gamma t}{\pi}\Big)^{\frac{1}{1-\alpha}} \Big\rceil \, ,
    \]
    where $\gamma$ is as in~\eqref{eq:omega_limit} and $\lceil \cdot \rceil$ denotes the ceiling function.
    The reason for this specific choice of $N_0$ will be clear in~\eqref{eq:derivative_phi_bound} below.
    For the moment, the reader can think of $N_0$ as a large enough integer.

    On the one hand, summation by parts yields
    \[ \label{eq:summation_by_parts}
    \begin{split}
        \sum_{k=N_0}^N e^{i \bar \phi(k,x,t)} & = \sum_{k=N_0}^{N} (k - (k-1)) e^{i \bar \phi(k,x,t)} = \sum_{k=N_0}^{N} k e^{i \bar \phi(k,x,t)} - \sum_{k=N_0-1}^{N-1} k e^{i \bar \phi(k+1,x,t)} \\
        & = N e^{i \bar \phi(N,x,t)} - (N_0 - 1) e^{i \bar \phi(N_0,x,t)} - \sum_{k=N_0}^{N-1} k (e^{i \bar \phi(k+1,x,t)} - e^{i \bar \phi(k,x,t)}) \\
        & = N e^{i \bar \phi(N,x,t)} - (N_0 - 1) e^{i \bar \phi(N_0,x,t)} - \sum_{k=N_0}^{N-1} k \int_{k}^{k+1} \de_\xi e^{i \bar \phi(\xi,x,t)} \, \d \xi \\
        & = N e^{i \bar \phi(N,x,t)} - (N_0 - 1) e^{i \bar \phi(N_0,x,t)} - \int_{N_0}^{N} \lfloor \xi \rfloor \de_\xi e^{i \bar \phi(\xi,x,t)} \, \d \xi \, ,
    \end{split}
    \]
    where $\lfloor \xi \rfloor$ denotes the integer part of $\xi$.
    On the other hand, integrating by parts we obtain
    \[ \label{eq:integration_by_parts}
    \begin{split}
        \int_{N_0}^N e^{i \bar \phi(\xi,x,t)} \, \d \xi & = N e^{i \bar \phi(N,x,t)} - N_0 e^{i \bar \phi(N_0,x,t)} - \int_{N_0}^N \xi \de_\xi e^{i \bar \phi(\xi,x,t)} \, \d \xi \, .
    \end{split}
    \]
    Putting together~\eqref{eq:summation_by_parts} and~\eqref{eq:integration_by_parts}, we get that
    \[ \label{eq:series_minus_integral}
    \sum_{k=N_0}^N e^{i \bar \phi(k,x,t)} - \int_{N_0}^N e^{i \bar \phi(\xi,x,t)} \, \d \xi = e^{i \bar \phi(N_0,x,t)} + \int_{N_0}^N (\xi - \lfloor \xi \rfloor) \de_\xi e^{i \bar \phi(\xi,x,t)} \, \d \xi \, .
    \]
    The term $e^{i \bar \phi(N_0,x,t)}$ is bounded by $1$.
    To estimate the integral, we observe that $\xi \mapsto \xi - \lfloor \xi \rfloor$ is periodic with period $1$ and can be written as a Fourier series.
    Its coefficients are given by
    \[
    \int_0^1 (\xi - \lfloor \xi \rfloor) e^{-i 2 \pi k \xi} \, \d \xi = \int_0^1 \xi e^{-i 2 \pi k \xi} \, \d \xi =
    \begin{cases} \displaystyle \frac{1}{2}\,, & \text{if } k = 0 \, , \\
      \displaystyle \frac{i}{2\pi k}\,, & \text{if } k \neq 0 \, ,
    \end{cases}
    \]
    hence,
    \[ \label{eq:fourier_series_of_fractional_part}
    \begin{split}
      & \int_{N_0}^N (\xi - \lfloor \xi \rfloor) \de_\xi e^{i \bar \phi(\xi,x,t)} \, \d \xi \\
      & \quad = \int_{N_0}^N \Big( \frac{1}{2} + \sum_{k \in \Z \setminus \{0\}} \frac{i}{2\pi k} e^{i 2 \pi k \xi} \Big) \de_\xi e^{i \bar \phi(\xi,x,t)} \, \d \xi \\
      & \quad = \frac{1}{2} e^{i \bar \phi(N,x,t)} - \frac{1}{2} e^{i \bar \phi(N_0,x,t)} + \sum_{k \in \Z \setminus \{0\}} \frac{i}{2\pi k} \int_{N_0}^N e^{i 2 \pi k \xi} \de_\xi e^{i \bar \phi(\xi,x,t)} \, \d \xi \, ,
    \end{split}
    \]
    where in the last equality we exchanged the order of the integral and the sum thanks to the convergence in $L^2$ of the sawtooth Fourier partial sums.  

    Let us estimate the integrals $\int_{N_0}^N e^{i 2 \pi k \xi} \de_\xi e^{i \bar \phi(\xi,x,t)} \, \d \xi$ for every $k \in \Z \setminus \{0\}$.
    We have that
    \[ \label{eq:integral_to_estimate}
    \begin{split}
      \int_{N_0}^N e^{i 2 \pi k \xi} \de_\xi e^{i \bar \phi(\xi,x,t)} \, \d \xi & = \int_{N_0}^N e^{i (2\pi k \xi + \bar \phi(\xi,x,t))} i \de_\xi \bar \phi(\xi,x,t) \, \d \xi \\
      & = \int_{N_0}^N \frac{\de_\xi \bar \phi(\xi,x,t)}{ 2\pi k + \de_\xi \bar \phi(\xi,x,t)} \de_\xi e^{i (2\pi k \xi + \bar \phi(\xi,x,t))} \, \d \xi \\
      & = \int_{N_0}^N \psi(\xi,k,x,t) \de_\xi e^{i (2\pi k \xi + \bar \phi(\xi,x,t))} \, \d \xi \, ,
    \end{split}
    \]
    where we set $\psi(\xi,k,x,t) := \frac{\de_\xi \bar \phi(\xi,x,t)}{ 2\pi k + \de_\xi \bar \phi(\xi,x,t)}$.
    First of all, we observe that the denominator in $\psi$ does not vanish, since
    \[ \label{eq:derivative_phi_bound}
    |\de_\xi \bar \phi(\xi,x,t)| = |2\pi x + \alpha \gamma \xi^{-1+\alpha} t| \leq \pi + \alpha \gamma t N_0^{-1+\alpha} \leq \frac{3}{2} \pi < 2\pi \leq 2 \pi |k| \, ,
    \]
    for $N_0$ chosen as in~\eqref{eq:N0_choice} (which satisfies $\alpha \gamma t N_0^{-1+\alpha} < \pi$).
    From this inequality, it also follows that
    \[ \label{eq:psi_bound}
    |\psi(\xi,k,x,t)| \leq \frac{\frac{3}{2}\pi}{2\pi |k|- \frac{3}{2}\pi} \leq \frac{3}{|k|}  \, .
    \]
    Moreover, we have that
    \[ \label{eq:psi_derivative}
    \begin{split}
      \de_\xi \psi(\xi,k,x,t) & =  \frac{\de_{\xi \xi} \bar \phi(\xi,x,t)}{ 2\pi k + \de_\xi \bar \phi(\xi,x,t)} - \frac{\de_\xi \bar \phi(\xi,x,t)\de_{\xi \xi} \bar \phi(\xi,x,t)}{( 2\pi k + \de_\xi \bar \phi(\xi,x,t))^2} = \frac{2\pi k \de_{\xi \xi} \bar \phi(\xi,x,t)}{( 2\pi k + \de_\xi \bar \phi(\xi,x,t))^2} \\
      & = \frac{2\pi k \alpha(\alpha - 1) \gamma \xi^{-2+\alpha} t}{( 2\pi k + \de_\xi \bar \phi(\xi,x,t))^2} \, ,
    \end{split}
    \]
    which is negative for $k > 0$ and positive for $k < 0$, yielding monotonicity of $\psi(\xi,k,x,t)$ for every fixed $k \in \Z \setminus \{0\}$.
   We now integrate by parts in~\eqref{eq:integral_to_estimate} to obtain
    \[
    \begin{split}
     & \int_{N_0}^N \psi(\xi,k,x,t) \de_\xi e^{i (2\pi k \xi + \bar \phi(\xi,x,t))} \, \d \xi \\
     & \quad = \psi(N,k,x,t) e^{i (2\pi k N + \bar \phi(N,x,t))} - \psi(N_0,k,x,t) e^{i (2\pi k N_0 + \bar \phi(N_0,x,t))} \\
     & \quad \quad - \int_{N_0}^N \de_\xi \psi(\xi,k,x,t) e^{i (2\pi k \xi + \bar \phi(\xi,x,t))} \, \d \xi \, ,
    \end{split}
    \]
    which, by~\eqref{eq:integral_to_estimate}, \eqref{eq:psi_bound}, and~\eqref{eq:psi_derivative}, yields
    \[
    \begin{split}
      \Big| \int_{N_0}^N e^{i 2 \pi k \xi} \de_\xi e^{i \bar \phi(\xi,x,t)} \, \d \xi \Big| & \leq |\psi(N,k,x,t)| + |\psi(N_0,k,x,t)| + \int_{N_0}^N |\de_\xi \psi(\xi,k,x,t)| \, \d \xi \\
      &\leq \frac{C}{|k|} + \Big| \int_{N_0}^N \de_\xi \psi(\xi,k,x,t) \, \d \xi \Big|  = \frac{C}{|k|} + | \psi(N,k,x,t) - \psi(N_0,k,x,t)| \\
      & \leq \frac{C}{|k|} \, .
    \end{split}
    \]

    Inserting this estimate into~\eqref{eq:fourier_series_of_fractional_part} and by~\eqref{eq:series_minus_integral}, we obtain
    \[
    \Big| \sum_{k=N_0}^N e^{i \bar \phi(k,x,t)} - \int_{N_0}^N e^{i \bar \phi(\xi,x,t)} \, \d \xi \Big| \leq C  + C \sum_{k \in \Z \setminus \{0\}} \frac{1}{k^2} \leq C \, ,
    \]
    concluding the proof of the lemma.
\end{proof}

Next, one shows that the oscillatory term (in the $\xi$ variable) in the integral
\[
\Big| \int_{N_0}^N e^{i \bar \phi(\xi,x,t)} \, \d \xi \Big|
\]
introduces cancellations that improve the trivial bound given by $N$.
The precise statement is the following.

\begin{lemma} \label{lem:integral_exp}
  Let $\alpha \in (0,1)$.
  Then there exists $C > 0$ such that
  \[
  \Big| \int_{N_0}^N e^{i \bar \phi(\xi,x,t)} \, \d \xi \Big| \leq C t^{-\frac{1}{2}} N^{1 - \frac{\alpha}{2}} \, ,
  \]
  for every $t > 0$, $N_0 \geq 1$, $N \geq N_0$, and $x \in \big[-\frac{1}{2},\frac{1}{2}\big]$.
\end{lemma}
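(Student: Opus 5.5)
The estimate will follow from the van der Corput second-derivative test applied to the phase $\bar\phi(\cdot,x,t)$ on $[N_0,N]$. Since $\xi \geq N_0 \geq 1 > 0$, we have $\bar\phi(\xi,x,t) = 2\pi\xi x + \gamma \xi^\alpha t$, so
\[
\de_{\xi\xi}\bar\phi(\xi,x,t) = \alpha(\alpha-1)\gamma t\,\xi^{\alpha-2} \, ,
\]
which has constant (negative) sign. On $[N_0,N]$ it satisfies
\[
|\de_{\xi\xi}\bar\phi(\xi,x,t)| \geq \lambda := \alpha(1-\alpha)\gamma t N^{\alpha-2} \, ,
\]
uniformly in $x$. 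The linear term $2\pi\xi x$ does not affect the second derivative. This uniformity in $x$ is exactly what we need.

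The key step is to state and prove the second-derivative test. If $f\in C^2([a,b])$ satisfies $|f''|\geq\lambda>0$ with $f''$ of constant sign, then $|\int_a^b e^{if}|\leq C\lambda^{-1/2}$ with $C$ absolute. I would first prove the first-derivative test: if $f'$ is monotone and $|f'|\geq\mu>0$ on an interval, then $|\int e^{if}|\leq 2/\mu$ (in fact $3/\mu$ is enough). To see this, write $e^{if} = \frac{1}{if'}\,\de_\xi e^{if}$ and integrate by parts. The boundary terms are bounded by $2/\mu$. The remaining integral is bounded by $\int |(1/f')'| = |1/f'(b)-1/f'(a)| \leq 1/\mu$, using the monotonicity of $1/f'$.

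For the second-derivative test, note that $f'$ is strictly monotone, so it vanishes at most at one point $\xi_0$. Remove the set $\{|\xi-\xi_0|<\rho\}$, whose contribution is at most $2\rho$. On the remaining (at most two) intervals, the mean value theorem gives $|f'|\geq\lambda\rho$, so the first-derivative test bounds their contribution by $C/(\lambda\rho)$. If $f'$ has no zero, the same splitting works around the point where $|f'|$ is minimal. Optimizing with $\rho=\lambda^{-1/2}$ gives $C\lambda^{-1/2}$.

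Plugging in $\lambda$ gives
\[
\Big|\int_{N_0}^N e^{i\bar\phi}\,\d\xi\Big| \leq C\big(\alpha(1-\alpha)\gamma\big)^{-1/2}\, t^{-1/2} N^{1-\alpha/2} \, .
\]
The constant depends only on $\alpha$ and $\gamma$, independently of $t$, $x$, $N_0$ and $N$, which is the claim.

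There is no real obstacle here; the only care needed is bookkeeping. First, the bound must be uniform in $x$, which holds because $x$ enters only through the linear part of the phase. Second, one should use the crude lower bound $\xi^{\alpha-2}\geq N^{\alpha-2}$ over the whole interval instead of a dyadic decomposition. A dyadic splitting would give the same power $N^{1-\alpha/2}$, since the pieces sum geometrically, so the crude bound loses nothing.
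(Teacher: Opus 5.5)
Your proposal is correct and follows essentially the paper's argument. The paper likewise splits $[N_0,N]$ at the unique zero $\xi_0$ of $\de_\xi\bar\phi$ (or at the endpoint where $|\de_\xi\bar\phi|$ is smallest). Away from a neighborhood of $\xi_0$, it uses the lower bound $|\de_\xi\bar\phi|\geq \alpha(1-\alpha)\gamma t N^{\alpha-2}|\xi-\xi_0|$ together with the integration-by-parts first-derivative bound; it bounds the neighborhood trivially and then optimizes its width. The only difference is that you first state this as the general van der Corput second-derivative test and then plug in $\lambda$, whereas the paper carries out the same steps directly on $\bar\phi$.
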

\begin{proof}
    We start by observing that
    \[
    \de_\xi \bar \phi(\xi,x,t) = 2\pi x + \alpha \gamma \xi^{-1+\alpha} t \, , \quad \de_{\xi \xi} \bar \phi(\xi,x,t) = \alpha(\alpha - 1) \gamma \xi^{-2+\alpha} t \, .
    \]
    For every $\xi \in [N_0, N]$ we have that
    \[ \label{eq:phi_xi_xx}
    \xi^{-2+\alpha} \geq N^{-2+\alpha} \implies \de_{\xi \xi} \bar \phi(\xi,x,t) \leq \alpha(\alpha - 1) \gamma N^{-2+\alpha} t < 0 \, .
    \]
    The function $\xi \mapsto \de_\xi \bar \phi(\xi,x,t)$ is strictly decreasing in $[N_0,N]$.
    It may change sign at most once in $[N_0,N]$.
    (It does if $x \in \big[-\frac{1}{2},0\big)$ and $(-\frac{\alpha \gamma t}{2\pi x})^\frac{1}{1-\alpha} \in [N_0,N]$).
    If it vanishes in $[N_0,N]$, we let $\xi_0 \in [N_0,N]$ be the unique point where $\de_\xi \bar \phi(\xi_0,x,t) = 0$. Otherwise, we set $\xi_0 := N$ if $\de_\xi \bar \phi(\xi,x,t) > 0$ for $\xi \in [N_0,N]$ or $\xi_0 = N_0$ if $\de_\xi \bar \phi(\xi,x,t) < 0$ for $\xi \in [N_0,N]$.
    We split the integral into two parts:
    \[ \label{eq:integral_split}
    \int_{N_0}^{N} e^{i \bar \phi(\xi,x,t)} \, \d \xi = \int_{N_0}^{\xi_0} e^{i \bar \phi(\xi,x,t)} \, \d \xi + \int_{\xi_0}^{N} e^{i \bar \phi(\xi,x,t)} \, \d \xi \, ,
    \]
    (the integrals over possibly empty intervals requiring no estimate).
    We discuss in detail the estimate of the first integral, showing that
    \[ \label{eq:integral_exp_first}
    \Big| \int_{N_0}^{\xi_0} e^{i \bar \phi(\xi,x,t)} \, \d \xi \Big| \leq C t^{-\frac{1}{2}} N^{1 - \frac{\alpha}{2}} \, .
    \]
    We fix $\zeta \in (N_0, \xi_0)$ (to be chosen later) and we study separately the two parts in the split integral
    \[
    \int_{N_0}^{\xi_0} e^{i \bar \phi(\xi,x,t)} \, \d \xi = \int_{N_0}^{\zeta} e^{i \bar \phi(\xi,x,t)} \, \d \xi + \int_{\zeta}^{\xi_0} e^{i \bar \phi(\xi,x,t)} \, \d \xi \, .
    \]
    Let us estimate the integral on $(N_0, \zeta)$.
    From~\eqref{eq:phi_xi_xx}, we get that
    \[  \label{eq:phi_xi_bound}
    \de_\xi \bar \phi(\xi,x,t) \geq \de_\xi \bar \phi(\zeta,x,t) \geq \alpha(1-\alpha) \gamma N^{-2+\alpha} t (\xi_0 - \zeta) =: \lambda > 0 \quad \text{for every } \xi \in (N_0, \zeta) \, .
    \]
    Then we write
    \[
    \de_\xi e^{i \bar \phi(\xi,x,t)} = i \de_\xi \bar \phi(\xi,x,t) e^{i \bar \phi(\xi,x,t)} \, ,
    \]
    and we integrate by parts to obtain
    \[
    \begin{split}
        \int_{N_0}^{\zeta} e^{i \bar \phi(\xi,x,t)} \, \d \xi & = \int_{N_0}^{\zeta} \frac{1}{i \de_\xi \bar \phi(\xi,x,t)} \de_\xi e^{i \bar \phi(\xi,x,t)} \, \d \xi  \\
        & = \Big[ \frac{e^{i \bar \phi(\xi,x,t)}}{i \de_\xi \bar \phi(\xi,x,t)} \Big]_{N_0}^{\zeta} - \int_{N_0}^{\zeta} e^{i \bar \phi(\xi,x,t)} \de_\xi \Big( \frac{1}{i \de_\xi \bar \phi(\xi,x,t)} \Big) \, \d \xi \, .
    \end{split}
    \]
    On the one hand, by~\eqref{eq:phi_xi_bound} and since $\xi \mapsto \de_\xi \bar \phi(\xi,x,t)$ is monotone, we have that
    \[
    \begin{split}
        \Big| \int_{N_0}^{\zeta} e^{i \bar \phi(\xi,x,t)} \, \d \xi \Big| & \leq \frac{1}{|\de_\xi \bar \phi(\zeta,x,t)|} + \frac{1}{|\de_\xi \bar \phi(N_0,x,t)|} + \int_{N_0}^{\zeta} \Big| \de_\xi \Big( \frac{1}{\de_\xi \bar \phi(\xi,x,t)} \Big) \Big| \, \d \xi \\
        & \leq \frac{2}{\lambda} + \Big| \int_{N_0}^{\zeta} \de_\xi \Big( \frac{1}{\de_\xi \bar \phi(\xi,x,t)} \Big) \, \d \xi \Big| = \frac{2}{\lambda} + \Big| \frac{1}{\de_\xi \bar \phi(\zeta,x,t)} - \frac{1}{\de_\xi \bar \phi(N_0,x,t)} \Big| \\
        & \leq \frac{4}{\lambda} = C t^{-1} N^{2-\alpha}  \frac{1}{\xi_0 - \zeta} \, .
    \end{split}
    \]
    On the other hand, we estimate the integral on $(\zeta, \xi_0)$ by the trivial bound
    \[
    \Big| \int_{\zeta}^{\xi_0} e^{i \bar \phi(\xi,x,t)} \, \d \xi \Big| \leq \xi_0 - \zeta \, .
    \]
    Putting together the above estimates, we obtain that
    \[
    \Big| \int_{N_0}^{\xi_0} e^{i \bar \phi(\xi,x,t)} \, \d \xi \Big| \leq C t^{-1}  N^{2-\alpha}  \frac{1}{\xi_0 - \zeta} + \xi_0 - \zeta \, .
    \]
    We optimize in $\zeta$, obtaining the following.
    If $C^\frac{1}{2} t^{-\frac{1}{2}} N^{1-\frac{\alpha}{2}} < \xi_0 - N_0$, then the optimum is attained for $\xi_0 - \zeta = C^{\frac{1}{2}} t^{-\frac{1}{2}} N^{1-\frac{\alpha}{2}}$ and
    \[
    \Big| \int_{N_0}^{\xi_0} e^{i \bar \phi(\xi,x,t)} \, \d \xi \Big| \leq  C t^{-\frac{1}{2}} N^{1 - \frac{\alpha}{2}} \, ,
    \]
    implying~\eqref{eq:integral_exp_first}.
    If, instead, $C^\frac{1}{2} t^{-\frac{1}{2}} N^{1-\frac{\alpha}{2}} \geq \xi_0 - N_0$, then~\eqref{eq:integral_exp_first} follows from the trivial bound
    \[
    \Big| \int_{N_0}^{\xi_0} e^{i \bar \phi(\xi,x,t)} \, \d \xi \Big| \leq  \xi_0 - N_0 \leq C t^{-\frac{1}{2}} N^{1 - \frac{\alpha}{2}} \, .
    \]

    To prove that the integral on $(\xi_0, N)$ in~\eqref{eq:integral_split} satisfies
    \[ \label{eq:integral_exp_second}
    \Big| \int_{\xi_0}^{N} e^{i \bar \phi(\xi,x,t)} \, \d \xi \Big| \leq C t^{-\frac{1}{2}} N^{1 - \frac{\alpha}{2}} \, ,
    \]
    we argue in a symmetric way, fixing $\zeta \in (\xi_0,N)$ and observing that
    \[
    \de_\xi \bar \phi(\xi,x,t) \leq \de_\xi \bar \phi(\zeta,x,t) \leq \alpha(\alpha - 1) \gamma N^{-2+\alpha} t (\zeta - \xi_0) < 0 \, , \quad \text{for every } \xi \in (\zeta,N) \, .
    \]
    The rest of the proof is analogous and we omit the details.

    Finally, putting together~\eqref{eq:integral_split},~\eqref{eq:integral_exp_first} and~\eqref{eq:integral_exp_second}, we conclude the proof of the lemma.
\end{proof}

From the previous lemmata, we deduce the following estimate on the oscillatory series involving~$\bar \phi$.

\begin{lemma} \label{lem:series_exp}
  Let $\alpha \in (0,1)$.
  Then there exists a constant $C$ such that
  \[
  \Big| \sum_{k = 1}^N e^{i \bar \phi(k,x,t)} \Big| \leq C ( 1 + t^{\frac{1}{1-\alpha}} + t^{-\frac{1}{2}} N^{1 - \frac{\alpha}{2}} ) \, ,
  \]
  for every $t>0$, $N \geq 1$, and every $x \in \big[-\frac{1}{2},\frac{1}{2}\big]$.
\end{lemma}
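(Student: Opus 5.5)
The plan is to combine Lemma~\ref{lem:series_integral_comparison} and Lemma~\ref{lem:integral_exp}, splitting the sum at the threshold $N_0$ from Lemma~\ref{lem:series_integral_comparison}. Fix $t>0$ and let $N_0 = \lceil (2\alpha\gamma t/\pi)^{\frac{1}{1-\alpha}} \rceil$ be as in~\eqref{eq:N0_choice}. This choice is uniform in $x$, with $N_0 \geq 1$ and $N_0 \leq 1 + C t^{\frac{1}{1-\alpha}}$ for every $t>0$.

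We write
\[
\sum_{k=1}^N e^{i\bar\phi(k,x,t)} = \sum_{k=1}^{\min(N,N_0)-1} e^{i\bar\phi(k,x,t)} + \mathds{1}_{\{N \geq N_0\}} \sum_{k=N_0}^{N} e^{i\bar\phi(k,x,t)} .
\]
The first sum has at most $N_0 - 1 \leq C t^{\frac{1}{1-\alpha}}$ unimodular terms, so it is bounded trivially by $C t^{\frac{1}{1-\alpha}}$. This is precisely where the term $t^{\frac{1}{1-\alpha}}$ in the statement comes from.

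For the second sum, when $N \geq N_0$, Lemma~\ref{lem:series_integral_comparison} lets us replace it by $\int_{N_0}^N e^{i\bar\phi(\xi,x,t)}\,\d\xi$ at the cost of an additive constant $C$, uniformly in $x$ and $N$. Lemma~\ref{lem:integral_exp} then bounds this integral by $C t^{-\frac12} N^{1-\frac{\alpha}{2}}$; that lemma holds for every $N_0 \geq 1$, so our particular $N_0$ is admissible. Adding the three contributions gives $C(1 + t^{\frac{1}{1-\alpha}} + t^{-\frac12} N^{1-\frac{\alpha}{2}})$.

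The only point needing care is that the constants are independent of $t$. In Lemma~\ref{lem:series_integral_comparison} the constant $C$ is uniform, and only $N_0$ depends on $t$. We must therefore check that the bound $N_0 - 1 \leq C t^{\frac{1}{1-\alpha}}$ holds for all $t>0$, not just for large $t$. This follows from $\lceil a \rceil - 1 \leq a$ applied to $a = (2\alpha\gamma t/\pi)^{\frac{1}{1-\alpha}}$. No genuine obstacle is expected, since the substantive work is contained in the two preceding lemmas.
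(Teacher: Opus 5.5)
Your proof is correct and is the paper's argument: split at the threshold $N_0$ of Lemma~\ref{lem:series_integral_comparison}, bound the initial block trivially using $N_0 \leq 1 + C t^{\frac{1}{1-\alpha}}$ for all $t>0$, and control the tail by Lemma~\ref{lem:series_integral_comparison} followed by Lemma~\ref{lem:integral_exp}. One harmless slip: the upper index of your first sum should be $\min(N, N_0 - 1)$ rather than $\min(N,N_0) - 1$, because as written your identity drops the $k = N$ term when $N < N_0$; that term has modulus one, so the bound is unaffected.
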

\begin{proof}
  We let $N_0$ be as in Lemma~\ref{lem:series_integral_comparison}.
  If $N \leq N_0$, we estimate the series by the trivial bound
    \[
    \Big| \sum_{k = 1}^N e^{i \bar \phi(k,x,t)} \Big| \leq N \leq N_0 \leq C(1 + t^{\frac{1}{1-\alpha}}) \, .
    \]
    If $N > N_0$, by Lemma~\ref{lem:series_integral_comparison} and Lemma~\ref{lem:integral_exp}, we estimate
  \[
  \begin{split}
    \Big| \sum_{k = 1}^N e^{i \bar \phi(k,x,t)} \Big| & \leq N_0 + \Big| \sum_{k = N_0}^N e^{i \bar \phi(k,x,t)} - \int_{N_0}^N e^{i \bar \phi(\xi,x,t)} \, \d \xi \Big| + \Big| \int_{N_0}^N e^{i \bar \phi(\xi,x,t)} \, \d \xi \Big| \\
    & \leq C t^{\frac{1}{1-\alpha}} + C + C  t^{-\frac{1}{2}} N^{1 - \frac{\alpha}{2}} \, ,
    \end{split}
  \]
  concluding the proof of the lemma.
\end{proof}

We are finally in a position to prove the main result of this subsection, Theorem~\ref{thm:series_convergence}.

\begin{proof}[Proof of Theorem~\ref{thm:series_convergence}]
  Let us set
  \[
  s(k,x,t) := \sum_{h=1}^k e^{i \bar \phi(h,x,t)} \, , \quad \rho(\xi,x,t) := \phi(\xi,x,t) - \bar \phi(\xi,x,t) \, ,
  \]
  for $k \in \N$, $k \geq 1$, $\xi > 0$, $x \in [-\frac{1}{2},\frac{1}{2}]$, and $t > 0$.
  Moreover, we set $s(0,x,t) := 0$.
  We apply a summation by parts to obtain, for every $N \geq N_0$,
  \[ \label{eq:summation_by_parts_replacement}
  \begin{split}
    \sum_{k=1}^{N} \frac{1}{k} e^{i \phi(k,x,t)} & = \sum_{k=1}^{N} \frac{1}{k} e^{i \rho(k,x,t)} e^{i \bar \phi(k,x,t)} = \sum_{k=1}^{N} \frac{1}{k} e^{i \rho(k,x,t)} (s(k,x,t) - s(k-1,x,t)) \\
    & = \sum_{k=1}^{N} \frac{1}{k} e^{i \rho(k,x,t)} s(k,x,t) - \sum_{k=0}^{N-1} \frac{1}{k+1} e^{i \rho(k+1,x,t)} s(k,x,t) \\
    & = \frac{1}{N} e^{i \rho(N,x,t)} s(N,x,t) - \sum_{k=1}^{N-1} \Big( \frac{1}{k+1} e^{i \rho(k+1,x,t)} - \frac{1}{k} e^{i \rho(k,x,t)} \Big) s(k,x,t) \, .
  \end{split}
  \]
  We estimate the terms in the above expression as follows.

  By Lemma~\ref{lem:series_exp}, the first term in~\eqref{eq:summation_by_parts_replacement} is bounded by
  \[ \label{eq:first_term_bound}
  \Big| \frac{1}{N} e^{i \rho(N,x,t)} s(N,x,t) \Big| \leq \frac{1}{N} \Big| \sum_{k=1}^N e^{i \bar \phi(k,x,t)} \Big| \leq C ( N^{-1} + t^{\frac{1}{1-\alpha}} N^{-1} + t^{-\frac{1}{2}} N^{- \frac{\alpha}{2}}  ) \, ,
  \]
  which, for a fixed $t > 0$, tends to zero as $N \to +\infty$ uniformly with respect to $x \in [-\frac{1}{2},\frac{1}{2}]$.

  To study the second term in~\eqref{eq:summation_by_parts_replacement}, we first observe that, by Lemma~\ref{lem:omega_prime},
  \begin{equation} \label{eq:bound_rho_derivative}
  |\de_\xi \rho(\xi,x,t)| = |\de_\xi \phi(\xi,x,t) - \de_\xi \bar \phi(\xi,x,t)| = |\omega'(\xi) - \alpha \gamma \xi^{\alpha-1} | t \leq C t \xi^{-\alpha - 1} \,,
  \end{equation}
  for $\xi > 0$, whence,
  \[
  \begin{split}
    \Big|  \frac{1}{k+1} e^{i \rho(k+1,x,t)} - \frac{1}{k} e^{i \rho(k,x,t)} \Big| & \leq \Big|\frac{e^{i \rho(k+1,x,t)} - e^{i \rho(k,x,t)}}{k+1} \Big| + \frac{1}{k(k+1)} \\
    & \leq \frac{1}{k+1} \sup_{\xi \in [k,k+1]} |\de_\xi \rho(\xi,x,t)| + \frac{1}{k^2} \\
    & \leq C\frac{t}{k^{\alpha+2}}+ \frac{1}{k^2} \leq C (1+t) k^{-2} \,.
  \end{split}
  \]
  This, together with Lemma~\ref{lem:series_exp}, yields
  \[ \label{eq:second_term_bound}
  \begin{split}
& \Big| \Big( \frac{1}{k+1} e^{i \rho(k+1,x,t)} - \frac{1}{k} e^{i \rho(k,x,t)} \Big) s(k,x,t) \Big|  \leq C (1+t)k^{-2} |s(k,x,t)| \\
    & \quad \leq C k^{-2} (1+t) (1+ t^{\frac{1}{1-\alpha}} + t^{-\frac{1}{2}} k^{1 - \frac{\alpha}{2}}  ) \leq C_t k^{-1 - \frac{\alpha}{2}} \, ,
\end{split}
  \]
 where $C_t > 0$ depends on $t$.
 In particular, given $t > 0$, the series
  \[
  \sum_{k=1}^{+\infty} \Big( \frac{1}{k+1} e^{i \rho(k+1,x,t)} - \frac{1}{k} e^{i \rho(k,x,t)} \Big) s(k,x,t)
  \]
  converges uniformly with respect to $x \in [-\frac{1}{2},\frac{1}{2}]$.

  By~\eqref{eq:summation_by_parts_replacement}, we conclude that the series
  \[
  \sum_{k=1}^{+\infty} \frac{1}{k} e^{i \phi(k,x,t)}
  \]
  converges uniformly with respect to $x \in [-\frac{1}{2},\frac{1}{2}]$.

  The bound~\eqref{eq:series_L_infty_bound} follows from~\eqref{eq:first_term_bound} and~\eqref{eq:second_term_bound}.
\end{proof}

\subsection{Proof of Theorem~\ref{thm:dissolution}} \label{subsec:proof-dissolution}

We are now in a position to prove the main result in this section.

\begin{proof}[Proof of Theorem~\ref{thm:dissolution}]
The unique solution to the Cauchy problem~\eqref{eq:peridynamics} with initial data $u_0$ and $v_0$ is given by the formula~\eqref{eq:solution_formula}.
By Remark~\ref{rem:u0}, in this specific case, we have that
\[
u(t,x) = \sum_{k \in \Z \setminus \{0\}} \frac{\cos(\omega(k) t)}{i 2 \pi k} e^{i 2 \pi k x} = \sum_{k \in \Z \setminus \{0\}} \frac{1}{i 4 \pi k} ( e^{i 2 \pi k x + i \omega(k) t} + e^{i 2 \pi k x - i \omega(k) t}) \, .
\]
Hence, the continuity of $u(t,\cdot)$ for $t > 0$ is implied by the uniform convergence of the oscillatory series
\[
\sum_{k \in \Z \setminus \{0\}} \frac{1}{k} e^{i 2\pi k x + i \omega(k) t} \, , \quad \text{for } t > 0 \, , \ x \in \T^1 \, ,
\]
the argument being analogous for the series with $-i \omega(k) t$.
This follows from Theorem~\ref{thm:series_convergence}.
The bound~\eqref{eq:L_infty_bound_solution} follows from~\eqref{eq:series_L_infty_bound}.
\end{proof}
\section{H\"older estimates} \label{sec:holder}

In this section, we quantify precisely the gain of H\"older regularity of the solution to the Cauchy problem~\eqref{eq:peridynamics} with initial data $u_0$ and $v_0$ as in Theorem~\ref{thm:dissolution}.

The main result of this section is the following.
It shows that the solution with the discontinuous initial data given by~\eqref{eq:initial_data} becomes instantaneously H\"older continuous with exponent $s$ for every $s \in (0,\frac{\alpha}{2}]$.
As $t \to 0^+$, the H\"older norm of the solution blows up at a precise rate, which is given by $t^{-\frac{s}{\alpha}}$, quantifying the loss of continuity as $t \to 0^+$.

\begin{theorem} \label{thm:hoelder_estimate}
  Let $\alpha \in (0,\frac{1}{2})$.
  Let $s \in (0, \frac{\alpha}{2}]$.
  Let $u_0 \in H^\alpha(\T^1)$ and $v_0 \in L^2(\T^1)$ be as in~\eqref{eq:initial_data}.
  Let $u \in C([0,+\infty);H^\alpha(\T^1)) \cap C^1([0,+\infty);L^2(\T^1))$ be the unique solution to the Cauchy problem~\eqref{eq:peridynamics} with initial data $u_0$ and $v_0$ provided by Theorem~\ref{thm:existence}.
  Then there exist $t_0 > 0$ and $C > 0$ such that
  \begin{equation} \label{eq:lower_bound_holder}
      \frac{1}{C} t^{-\frac{s}{\alpha}} \leq \| u(t, \cdot) \|_{C^{0,s}(\T^1)} \leq C t^{-\frac{s}{\alpha}} \, , \quad \text{ for every } t \in (0,t_0) \, .
  \end{equation}
\end{theorem}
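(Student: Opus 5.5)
By Lemma~\ref{lem:holder_besov}, it suffices to prove that
\[
\tfrac1C t^{-s/\alpha}\le \sup_{j\ge -1}2^{sj}\|P_j u(t,\cdot)\|_{L^\infty(\T^1)}\le C t^{-s/\alpha}
\]
for small $t$. Since $u(t,x)=\sum_{k\neq0}\frac{\cos(\omega(k)t)}{i2\pi k}e^{i2\pi kx}$, we have $P_{-1}u=0$. Each block $P_ju(t,\cdot)=\sum_{2^{j-1}\le|k|<2^{j+1}}\psi_j(|k|)\frac{\cos(\omega(k)t)}{i2\pi k}e^{i2\pi kx}$ is a finite oscillatory sum. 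The proof therefore reduces to a two-sided dyadic estimate. The heuristic is the natural time-frequency scale $2^{j}\sim t^{-1/\alpha}$, where $\omega(k)t\sim1$.

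\textbf{Upper bound.} I would estimate $\|P_ju(t)\|_{L^\infty}$ in two ways and take the minimum.
\begin{itemize}
\item The trivial bound is $\|P_ju\|_{L^\infty}\le\sum_{|k|\sim2^j}\frac1{|k|}\le C$.
\item The oscillatory bound is $\|P_ju\|_{L^\infty}\le C t^{-1/2}2^{-j\alpha/2}$. To get it, write $\cos$ as half the sum of $e^{\pm i\omega(k)t}$ and remove the weight $\psi_j(|k|)/k$ by summation by parts. That weight has bounded variation of order $2^{-j}$ on the block, since $\psi_j(2^{-j}\cdot)$ is a fixed $C^{1}$ profile. It then remains to bound the partial sums $\sum_{k=M}^{L}e^{i\phi(k,x,t)}$ for $M,L\sim2^j$. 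I would replace $\phi$ by $\bar\phi$ through the factor $e^{i\rho}$, exactly as in the proof of Theorem~\ref{thm:series_convergence}, using \eqref{eq:bound_rho_derivative}. For the $\bar\phi$ sums, I would use Lemmas~\ref{lem:series_integral_comparison} and~\ref{lem:integral_exp}, which give $C(1+t^{-1/2}2^{j(1-\alpha/2)})$ for $t\le1$; the $N_0$ contribution is $O(1)$ there. The remaining $O(1)$ term costs $2^{-j}$ after multiplying by the weight, and this is dominated by $t^{-1/2}2^{-j\alpha/2}$ when $t\le1$.
\end{itemize}
Combining the two bounds gives $2^{sj}\|P_ju\|_{L^\infty}\le C\min(2^{sj},\,t^{-1/2}2^{j(s-\alpha/2)})$. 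When $s\le\alpha/2$ the first term increases in $j$ and the second is nonincreasing, so the minimum is at most its value at $2^j=t^{-1/\alpha}$, namely $Ct^{-s/\alpha}$.

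\textbf{Lower bound.} I would choose $j$ with $2^j\approx c\,t^{-1/\alpha}$ for a small constant $c$, so that $\omega(k)t\le C_2(2^{j+1})^\alpha t\le\pi/3$ for all $|k|<2^{j+1}$. Then $\cos(\omega(k)t)\ge\frac12$ on the whole block. Next I would evaluate $P_ju$ at a point where the signs align. Since $P_ju(x)=\sum_{k\ge1}\psi_j(k)\frac{\cos(\omega(k)t)}{\pi k}\sin(2\pi kx)$, I would take $x=2^{-j-3}$, so that $2\pi kx\in(0,\pi/2]$ for $k<2^{j+1}$ and $\sin(2\pi kx)\ge c'k2^{-j}$. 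Every term is nonnegative. Restricting to the range of Lemma~\ref{lem:psi_lower_bound}, where $\psi_j\ge\frac12$ and which contains $\gtrsim2^{j}$ integers, gives $P_ju(x)\ge c''>0$. Hence $2^{sj}\|P_ju\|_{L^\infty}\ge c\,2^{sj}\ge c\,t^{-s/\alpha}$. The case $j\ge0$ requires $t<t_0$ small.

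\textbf{Main obstacle.} I expect the main difficulty to be the dyadic oscillatory bound with sharp dependence on $2^j$ and $t$. Applying Lemma~\ref{lem:integral_exp} on $[2^{j-1},2^{j+1}]$ instead of on $[N_0,N]$ is not a problem, since its proof is insensitive to the starting point. The remaining care is in controlling the cross terms from summation by parts: the variation of $e^{i\rho}\psi_j(|k|)/k$ is $\lesssim 2^{-j}(1+t2^{-j\alpha})$. With that in hand, the upper estimate follows exactly as in \eqref{eq:second_term_bound}, restricted to one block.
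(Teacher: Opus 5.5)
Your proposal is correct and follows essentially the paper's proof. For the upper bound, you use the Besov characterization, summation by parts against partial sums of $e^{i\bar\phi}$ with the weight $\psi_j(k)e^{i\rho(k,x,t)}/k$ of total variation $O(2^{-j})$, Lemma~\ref{lem:series_exp}, and the optimization of $\min\{2^{sj},\,t^{-1/2}2^{j(s-\alpha/2)}\}$ at $2^j\sim t^{-1/\alpha}$. For the lower bound, you choose $2^j\sim t^{-1/\alpha}$ so that $\cos(\omega(k)t)\ge\frac12$ on the whole block. The only differences are cosmetic. You evaluate at $x=2^{-j-3}$, use $\sin(2\pi kx)\ge k2^{-j-1}$, and count indices, whereas the paper takes $x_j=\frac{9}{48}2^{-j}$, uses $\sin(2\pi kx_j)\ge\frac12$, and bounds a harmonic sum. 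Also, the fixed profile is $\psi_j(2^j\cdot)=\psi_0$, not $\psi_j(2^{-j}\cdot)$ as you wrote.
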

\begin{proof}
    \step{1} (Upper bound)
    Let us prove that
\begin{equation*}
    \| u(t, \cdot) \|_{C^{0,s}(\T^1)} \leq C t^{-\frac{s}{\alpha}} \, .
\end{equation*}
We write the solution explicitly for $t > 0$ and $x \in \T^1$ as follows
\begin{equation*}
    u(t,x) = \sum_{k \in \Z \setminus \{0\}} \frac{\cos(\omega(k) t)}{i 2 \pi k} e^{i 2 \pi k x} = \sum_{k \in \Z \setminus \{0\}} \frac{1}{i 4 \pi k} \Big( e^{i 2 \pi k x + i \omega(k) t} + e^{i 2 \pi k x - i \omega(k) t} \Big) \, .
\end{equation*}
To measure the H\"older regularity of $u(t, \cdot)$, we use the Besov characterization of H\"older spaces given in Lemma~\ref{lem:holder_besov}.
Adopting the notation of the Littlewood-Paley decomposition introduced in Subsection~\ref{subsec:holder_besov}, for $j \geq 0$, $t > 0$, and $x \in \T^1$ we have that
\begin{equation*}
    P_j u(t,x) = \sum_{k \in \Z \sm \{0\}} \psi_j(|k|) \frac{1}{i 4 \pi k} \Big( e^{i 2 \pi k x + i \omega(k) t} + e^{i 2 \pi k x - i \omega(k) t} \Big) \, .
\end{equation*}
Our goal is to bound $\| P_j u(t, \cdot) \|_{L^\infty(\T^1)}$ from above for every $j \geq 0$.
We adopt the notation used in the proof of Theorem~\ref{thm:series_convergence} and we write, for $t > 0$ and $x \in \T^1$,
\begin{equation*}
  s(k,x,t) := \sum_{h=1}^k e^{i \bar \phi(h,x,t)} \, , \quad \rho(\xi,x,t) := \phi(\xi,x,t) - \bar \phi(\xi,x,t) \, .
\end{equation*}
Applying summation by parts, we get that, for $j \geq 2$, $t > 0$ and $x \in \T^1$,
\begin{equation} \label{eq:summation_by_parts_holder}
  \begin{split}
      \sum_{k = 1}^{+\infty} \psi_j(k) \frac{1}{k} e^{i \phi(k,x,t)} & = \sum_{k = 1}^{+\infty} \psi_j(k) \frac{1}{k} e^{i \rho(k,x,t)} e^{i \bar \phi(k,x,t)} \\
                                                                     & = \sum_{k = 1}^{+\infty} \psi_j(k) \frac{1}{k} e^{i \rho(k,x,t)} (s(k,x,t) - s(k-1,x,t)) \\
                                                                                           & = \sum_{k = 1}^{+\infty} \psi_j(k) \frac{1}{k} e^{i \rho(k,x,t)} s(k,x,t) -  \sum_{k = 0}^{+\infty} \psi_j(k+1) \frac{1}{k+1} e^{i \rho(k+1,x,t)} s(k,x,t) \\
                                                                                           & = - \sum_{k = 2^{j-1}-1}^{2^{j+1}} \Big( \psi_j(k+1) \frac{1}{k+1} e^{i \rho(k+1,x,t)} - \psi_j(k) \frac{1}{k} e^{i \rho(k,x,t)} \Big) s(k,x,t) \, .
  \end{split}
\end{equation}
Exploiting~\eqref{eq:bound_rho_derivative}, we estimate, for $2^{j-1} - 1 \leq k \leq 2^{j+1}$,
\begin{equation*}
  \begin{split}
    & \Big| \psi_j(k+1) \frac{1}{k+1} e^{i \rho(k+1,x,t)} - \psi_j(k) \frac{1}{k} e^{i \rho(k,x,t)} \Big| \\ 
    & \quad \leq \Big| \frac{\psi_j(k+1) - \psi_j(k)}{k+1} \Big| + \Big| \psi_j(k) \Big( \frac{1}{k+1} - \frac{1}{k} \Big) \Big| + | \psi_j(k) | \frac{1}{k} | e^{i \rho(k+1,x,t)} - e^{i \rho(k,x,t)} | \\
    & \quad \leq \sup_{h} |\psi'_j(h)| \frac{1}{k+1} + \sup_{h} |\psi_j(h)| \frac{1}{k(k+1)} +  \sup_{h} | \psi_j(h) | \frac{1}{k} \sup_{\xi \in [k,k+1]} | \de_\xi \rho(\xi,x,t) | \\
    & \quad \leq \sup_{h} |2^{-j} \psi'_0(2^{-j} h)| \frac{1}{k+1} + \sup_{h} |\psi_0(2^{-j} h)| \frac{1}{k(k+1)} + \sup_{h} | \psi_0(2^{-j} h) | C t k^{-\alpha-2} \\  
    & \quad \leq C 2^{-2j} + C t 2^{-(2+\alpha)j} \leq C 2^{-2j}  \, ,
  \end{split}
\end{equation*}
for $t \in (0,1)$.
By Lemma~\ref{lem:series_exp}, for $t \in (0,t_0)$ with $t_0 > 0$ chosen sufficiently small and $2^{j-1} - 1 \leq k \leq 2^{j+1}$, we have that
\begin{equation*}
    |s(k,x,t)| \leq C t^{-\frac{1}{2}} 2^{j(1 - \frac{\alpha}{2})} \, .
\end{equation*}
On the other hand, we have the trivial bound
\begin{equation*}
    |s(k,x,t)| \leq C 2^j \, .
\end{equation*}
Putting these estimates into~\eqref{eq:summation_by_parts_holder} and reasoning analogously for negative $k$'s, we get that, for $t \in (0,t_0)$ and $x \in \T^1$,
\begin{equation*}
|P_j u(t,x)| \leq C 2^j \cdot 2^{-2 j} \min\{t^{-\frac{1}{2}} 2^{j(1 - \frac{\alpha}{2})}, 2^j\} \leq C \min\{t^{-\frac{1}{2}} 2^{-j \frac{\alpha}{2}}, 1\} \, ,
\end{equation*}
for $j \geq 2$. 
Estimating $P_{-1} u$, $P_0 u$, $P_1$ separately with trivial bounds, we get
\begin{equation} \label{eq:P_ju_besov_bound}
    \| u(t, \cdot) \|_{B^s_{\infty,\infty}(\T^1)} = \sup_{j \geq -1} 2^{j s} \| P_j u(t, \cdot) \|_{L^\infty(\T^1)} \leq \sup_{j \geq -1} C 2^{j s} \min\{t^{-\frac{1}{2}} 2^{-j \frac{\alpha}{2}}, 1\} \, .
\end{equation}
Let us compute the supremum in the right-hand side of~\eqref{eq:P_ju_besov_bound} by discussing the two cases $t^{-\frac{1}{2}} 2^{-j \frac{\alpha}{2}} \leq 1$ and $t^{-\frac{1}{2}} 2^{-j \frac{\alpha}{2}} \geq 1$.
If $j$ is such that $t^{-\frac{1}{2}} 2^{-j \frac{\alpha}{2}} \leq 1$, \ie,
\begin{equation} \label{eq:j_lower_bound_split_holder}
    \Big\lceil -\frac{1}{\alpha} \log_2 ( t ) \Big\rceil \leq j \, ,
\end{equation}
then $2^{j s} \min\{t^{-\frac{1}{2}} 2^{-j \frac{\alpha}{2}}, 1\} = t^{-\frac{1}{2}} 2^{j(s - \frac{\alpha}{2})}$, which is nonincreasing in $j$, since $s \in (0, \frac{\alpha}{2}]$.
Hence, for such $j$'s, the supremum is attained with equality in~\eqref{eq:j_lower_bound_split_holder}, and gives
\begin{equation*}
    C 2^{j s} \min\{t^{-\frac{1}{2}} 2^{-j \frac{\alpha}{2}}, 1\} \leq C t^{-\frac{1}{2}} 2^{- \frac{1}{\alpha} \log_2 ( t ) (s - \frac{\alpha}{2})} = C t^{-\frac{s}{\alpha}} \, .
\end{equation*}
If $j$ is such that $t^{-\frac{1}{2}} 2^{-j \frac{\alpha}{2}} \geq 1$, \ie,
\begin{equation} \label{eq:j_upper_bound_split_holder}
    \Big\lfloor -\frac{1}{\alpha} \log_2 ( t ) \Big\rfloor \geq j \, ,
\end{equation}
then $2^{j s} \min\{t^{-\frac{1}{2}} 2^{-j \frac{\alpha}{2}}, 1\} = 2^{j s}$, which is nondecreasing in $j$, since $s \in (0,\frac{\alpha}{2}]$.
The supremum is attained with equality in~\eqref{eq:j_upper_bound_split_holder}, and gives
\begin{equation*}
    C 2^{j s} \min\{t^{-\frac{1}{2}} 2^{-j \frac{\alpha}{2}}, 1\} \leq C 2^{j s} \leq C 2^{ - \frac{1}{\alpha} \log_2 ( t )s} = C t^{-\frac{s}{\alpha}} \, .
\end{equation*}
In either case, from~\eqref{eq:P_ju_besov_bound} and Lemma~\ref{lem:holder_besov}, we deduce that
\begin{equation*}
    \| u(t, \cdot) \|_{C^{0,s}(\T^1)} \leq C \| u(t, \cdot) \|_{B^s_{\infty,\infty}(\T^1)} \leq C t^{-\frac{s}{\alpha}} \, ,
\end{equation*}
concluding the proof of the upper bound.

\step{2} (Lower bound)
    Let us prove that
    \begin{equation*}
        \| u(t, \cdot) \|_{C^{0,s}(\T^1)} \geq C t^{-\frac{s}{\alpha}} \, .
    \end{equation*}
    The proof will actually hold true for every $s \in (0,1)$.
We start by writing the solution explicitly as follows
\begin{equation*}
    u(t,x) = \sum_{k \in \Z \setminus \{0\}} \frac{\cos(\omega(k) t)}{i 2 \pi k} e^{i 2 \pi k x} = \frac{1}{\pi} \sum_{k = 1}^{+\infty}  \frac{1}{k} \cos(\omega(k) t) \sin(2 \pi k x) \, .
\end{equation*}
We will measure the H\"older regularity of $u(t, \cdot)$ using Lemma~\ref{lem:holder_besov}.
For every $j \geq 1$, we have that,
\begin{equation} \label{eq:P_ju_explicit}
    P_j u(t,x) = \sum_{k \in \Z} \psi_j(|k|) \hat u_k e^{i 2 \pi k x} = \frac{1}{\pi} \sum_{k = 2^{j-1}}^{2^{j+1}} \psi_j(k) \frac{1}{k} \cos(\omega(k) t) \sin(2 \pi k x) \, .
\end{equation}

By Remark~\ref{rem:omega_bounds}, we have that $\omega(k) \leq C_2 |k|^\alpha$ for some $C_2 > 0$.
Given $t > 0$, we choose
\begin{equation} \label{eq:j_choice_t}
 j := \Big\lfloor \frac{1}{\alpha} \log_2 \Big( \frac{\pi}{3 C_2 t} \Big) - 1 \Big\rfloor \, ,
\end{equation}
so that
\begin{equation*}
    j \leq \frac{1}{\alpha} \log_2 \Big( \frac{\pi}{3 C_2 t} \Big) - 1 \implies C_2 2^{\alpha (j+1)} t \leq  \frac{\pi}{3}\, ,
\end{equation*}
and
\begin{equation} \label{eq:j_choice_t_2}
    \frac{1}{\alpha} \log_2 \Big( \frac{\pi}{3 C_2 t} \Big) - 1 < j + 1 \implies  2^{ j } > \frac{1}{4}\Big(\frac{\pi}{3 C_2}\Big)^\frac{1}{\alpha} t^{-\frac{1}{\alpha}} \geq C t^{-\frac{1}{\alpha}} \, .
\end{equation}
Note that $j \geq 1$ for $t \in (0, t_0)$ with $t_0 > 0$ chosen sufficiently small.
This choice implies that, for $2^{j-1} \leq k \leq 2^{j+1}$,
\begin{equation*}
    0 < \omega(k) t \leq C_2 k^\alpha t \leq C_2 2^{\alpha(j+1)} t \leq \frac{\pi}{3} \, ,
\end{equation*}
and thus
\begin{equation*}
    \cos(\omega(k) t) \geq \frac{1}{2} \, , \quad \text{ for }  2^{j-1} \leq k \leq 2^{j+1} \, .
\end{equation*}
Given $j$ as in~\eqref{eq:j_choice_t}, we choose
\begin{equation*}
    x_j := \frac{9}{48} 2^{-j} \, ,
\end{equation*}
so that
\begin{equation*}
    \frac{\pi}{6} < \frac{9}{48} \pi = 2 \pi 2^{j-1} x_j \leq 2 \pi k x_j \leq 2 \pi 2^{j+1} x_j = \frac{36}{48} \pi < \frac{5 \pi}{6} \, ,
\end{equation*}
and thus
\begin{equation*}
    \sin(2 \pi k x_j) \geq \frac{1}{2} \, , \quad \text{ for } 2^{j-1} \leq k \leq 2^{j+1} \, .
\end{equation*}
Finally, by~\eqref{eq:P_ju_explicit}, applying Lemma~\ref{lem:psi_lower_bound}, we get that,
\begin{equation*}
  \begin{split}
      \| P_j u(t, \cdot) \|_{L^\infty(\T^1)} & \geq P_j u(t, x_j) \geq \frac{1}{\pi} \sum_{k=2^{j-1}}^{2^{j+1}} \psi_j(k) \frac{1}{k} \cdot \frac{1}{2} \cdot \frac{1}{2} \\
                                             & \geq \frac{1}{\pi} \sum_{k=k'_j}^{k''_j} \frac{1}{k} \cdot \frac{1}{2} \cdot \frac{1}{2} \cdot \frac{1}{2} \geq \frac{1}{8 \pi} \sum_{k=k'_j}^{k''_j} \frac{1}{k} \geq C \log\Big( \frac{k''_j}{k'_j} \Big)\, ,
  \end{split}
\end{equation*}
where
\begin{equation*}
    k'_j = \Big\lceil 2^{j-1} \Big( 1 + \Big( \frac{\log(2)}{1 + \log(2)} \Big)^\frac{1}{2} \Big) \Big\rceil \, , \quad k''_j  = \Big\lfloor 2^{j} \Big( 1 + \Big( \frac{\log(2)}{1 + \log(2)} \Big)^\frac{1}{2} \Big) \Big\rfloor \, ,
\end{equation*}
are the indices provided by Lemma~\ref{lem:psi_lower_bound}.
Note that, for $j \geq 1$,
\begin{equation*}
    \frac{k''_j}{k'_j} = \frac{ \Big\lfloor 2^{j} \Big( 1 + \Big( \frac{\log(2)}{1 + \log(2)} \Big)^\frac{1}{2} \Big) \Big\rfloor }{\Big\lceil 2^{j-1} \Big( 1 + \Big( \frac{\log(2)}{1 + \log(2)} \Big)^\frac{1}{2} \Big) \Big\rceil} \geq C > 1 \, ,
\end{equation*}
for some constant $C$ independent of $j$, and thus $\| P_j u(t, \cdot) \|_{L^\infty(\T^1)} \geq C$.

By Lemma~\ref{lem:holder_besov} and by~\eqref{eq:j_choice_t_2}, it follows that
\begin{equation*}
    \| u(t, \cdot) \|_{C^{0,s}(\T^1)} \geq C \| u(t, \cdot) \|_{B^s_{\infty,\infty}(\T^1)} \geq C 2^{j s} \| P_j u(t, \cdot) \|_{L^\infty(\T^1)} \geq C 2^{j s} \geq C t^{-\frac{s}{\alpha}}\, .
\end{equation*}
This concludes the proof of~\eqref{eq:lower_bound_holder}.
\end{proof}

\section{An example of nucleation of a jump discontinuity} \label{sec:nucleation}

In this section, we provide an example of a continuous initial datum $u_0$ and an initial velocity $v_0$ such that the corresponding peridynamic evolution develops a jump discontinuity at a given later time $t_1 > 0$.
The construction simply relies on the reversibility of the linear peridynamic equation and on the dissolution of jump discontinuities proved in Theorem~\ref{thm:dissolution}.

\subsection{Main result about the nucleation of a jump discontinuity}
We let $t_1 > 0$ be fixed.
We let
\[
u_1(x) := \frac{1}{2} - x \, , \quad v_1(x) := 0 \, , \quad x \in [0,1) \, ,
\]
extended by 1-periodicity to the whole real line.
Notice that these are precisely the initial data considered in Section~\ref{sec:dissolution}.
We let $w \in C([0,+\infty);H^\alpha(\T^1)) \cap C^1([0,+\infty);L^2(\T^1))$ be the solution to the Cauchy problem~\eqref{eq:peridynamics} with initial data $u_1$ and $v_1$ provided by Theorem~\ref{thm:existence}.
By Theorem~\ref{thm:dissolution}, we know that $w(t,\cdot)$ is a continuous function for every $t > 0$.
We extend $w$ to negative times by setting
\[\label{eq:extension_negative_times}
w(t,x) := w(-t,x) \, \quad \text{for } t < 0 \, , \ x \in \R \, .
\]

Let us set
\[
u(t,x) := w(t_1 - t,x) \, , \quad \text{for } t > 0 \, , \ x \in \R \, .
\]
We observe that $u$ is a solution (thus the unique one, by Theorem~\ref{thm:uniqueness}) to the Cauchy problem~\eqref{eq:peridynamics} with initial data
\[
u_0(x) := w(t_1,x) \, , \quad v_0(x) := -\de_t w(t_1,x) \, , \quad x \in [0,1] \, ,
\]
extended by 1-periodicity to the whole real line.
Note that $u_0 \in H^\alpha(\T^1)$, $v_0 \in L^2(\T^1)$, and $u_0$ is a continuous function.
Finally, $u(t_1,x) = w(0,x) = u_1(x)$, which has a jump discontinuity.

Together with the bounds provided in Theorem~\ref{thm:hoelder_estimate}, this proves Theorem~\ref{thm:nucleation}.

\section{An example of a solution with a jump discontinuity on a dense set of times} \label{sec:dense_set}

Building on the example in Section~\ref{sec:nucleation}, we can construct a solution to the Cauchy problem~\eqref{eq:peridynamics} that nucleates spatial discontinuities at a sequence of times that is dense in the time interval $[0,1]$.

The result is the following.

\begin{theorem} \label{thm:dense_set}
 Let $\alpha \in (0,\frac{1}{2})$.
  There exist $u_0 \in H^\alpha(\T^1)$ and $v_0 \in L^2(\T^1)$ such that $u_0$ is a continuous function and the unique solution $u \in C([0,+\infty);H^\alpha(\T^1)) \cap C^1([0,+\infty);L^2(\T^1))$ to the Cauchy problem~\eqref{eq:peridynamics} with initial data $u_0$ and $v_0$ provided by Theorem~\ref{thm:existence} satisfies that $u(t,\cdot)$ is a discontinuous function for all $t \in D$ (more precisely, it is continuous for $x \neq 0$ and has a jump discontinuity at $x = 0$), where $D$ is a dense subset of $(0,1)$.
\end{theorem}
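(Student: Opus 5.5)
We superpose countably many translated-in-time copies of the nucleation example of Section~\ref{sec:nucleation}, with summable weights chosen so that at each time $\tau_n$ exactly one summand carries a jump while the others add up to a continuous function. Let $(\tau_n)_{n\ge1}$ enumerate $\Q\cap(0,1)$ and let $w$ be the even-in-time extension~\eqref{eq:extension_negative_times} of the solution with data $u_1,v_1$. Set $w_n(t,x):=w(\tau_n-t,x)$, so that $w_n$ solves~\eqref{eq:peridynamics}, $w_n(\tau_n,\cdot)=u_1$ has a jump of size $1$ at $x=0$, and $w_n(t,\cdot)$ is continuous for $t\neq\tau_n$. We define
\[
u(t,x):=\sum_{n\ge1} a_n\, w_n(t,x),\qquad a_n>0 ,
\]
with $a_n$ to be chosen. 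Energy conservation (Theorem~\ref{thm:energy_conservation}) gives $\|w_n(t)\|_{H^\alpha}+\|\partial_t w_n(t)\|_{L^2}\le C$ uniformly in $n$ and $t$, since $\hat w_{n,0}=0$. Hence, whenever $\sum a_n<\infty$, the series converges in $C([0,T];H^\alpha)\cap C^1([0,T];L^2)$. Passing to the limit in the weak formulation term by term, $u$ is the unique solution (Theorem~\ref{thm:uniqueness}) with data $u_0=\sum a_n w_n(0)$ and $v_0=\sum a_n\partial_t w_n(0)$.

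The main obstacle is pointwise control: we need $\sum_{m\ne n}a_m w_m(\tau_n,\cdot)$ to be continuous (and $u_0$ continuous), but $\|w(s,\cdot)\|_{L^\infty}\le C|s|^{-1/2}$ blows up as $s\to0$, and the differences $\tau_n-\tau_m$ accumulate at $0$. We handle this with Theorem~\ref{thm:dissolution} and a recursive choice of weights. Choose $a_1=1$ and then, given $a_1,\dots,a_{n-1}$, choose $a_n\le 2^{-n}a_{n-1}$ so small that
\[
a_n\,\sup_x|w(\tau_n-\tau_m,x)|\le 2^{-n}a_m\cdot\tfrac14 \quad\text{for all } m<n,\qquad a_n\sup_x|w(\tau_n,x)|\le 2^{-n}.
\]
This is possible because $\tau_n\ne\tau_m$, $\tau_n>0$, and each $w(s,\cdot)$ is bounded for $s\neq0$ by~\eqref{eq:L_infty_bound_solution} (for $|s|\le1$) and by uniform convergence for general $s$. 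With this choice:
\begin{itemize}
\item $u_0=\sum a_nw(\tau_n,\cdot)$ converges uniformly, so it is continuous.
\item Fix $n$. The terms $m>n$ satisfy $\sum_{m>n} a_m\|w_m(\tau_n)\|_\infty\le a_n\sum_{m>n}2^{-m}/4<a_n/4$, and they converge uniformly, so their sum is continuous.
\item The finitely many terms $m<n$ are continuous since $\tau_n\ne\tau_m$.
\end{itemize}
Therefore $u(\tau_n,\cdot)=a_n u_1+g_n$ with $g_n$ continuous, so $u(\tau_n,\cdot)$ is continuous for $x\ne0$ and has a jump of size $a_n>0$ at $x=0$. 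Taking $D=\{\tau_n\}$, which is dense in $(0,1)$, completes the construction.

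One remaining check is that $u(t,\cdot)$ genuinely agrees with the pointwise series: uniform convergence at $t=\tau_n$ shows that the $L^2$ limit has this continuous-plus-jump representative. The domination condition for the tail is actually unnecessary for the jump itself, since uniform convergence of the tail already suffices; the smallness of $a_n$ relative to the earlier weights is what guarantees that uniform convergence.
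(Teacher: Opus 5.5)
Your proposal is correct, but it controls the tails differently from the paper. The paper takes $D$ to be the dyadic rationals, grouped by generation $D_n\setminus D_{n-1}$ ($2^{n-1}$ points each), and fixes the explicit weights $a_n=2^{-3n}$. It then bounds the tail at $\bar s\in D_{\bar n}\setminus D_{\bar n-1}$ using the quantitative estimate $\|w(s,\cdot)\|_{L^\infty(\T^1)}\le C|s|^{-1/2}$ of Theorem~\ref{thm:dissolution}, together with the lattice separation $|s-\bar s|\ge 2^{-n}$ for $s\in D_n\setminus D_{n-1}$, $n>\bar n$; this gives $C2^{-3n/2}$ per generation. You instead enumerate an arbitrary countable set and choose the weights diagonally, so that for each fixed $m$ the tail $\sum_{n>m}a_n\|w(\tau_n-\tau_m,\cdot)\|_{L^\infty}$ is dominated by a geometric series. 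Your route buys generality. You only use that $w(s,\cdot)$ is continuous, hence bounded, for each $s\neq0$; the rate $|s|^{-1/2}$ plays no role. The same argument therefore prescribes jumps at any countable family of distinct positive times, e.g.\ $\mathbb{Q}\cap(0,+\infty)$. Moreover, since your energy bounds are uniform in $t$, you get the solution on $[0,+\infty)$ directly, rather than on $[0,1]$ followed by an extension via Theorems~\ref{thm:existence} and~\ref{thm:uniqueness}. The paper's route buys explicitness: its weights are in closed form and do not depend on the unknown quantities $\|w(\tau_n-\tau_m,\cdot)\|_{L^\infty}$. As a minor point, the factor $a_m/4$ in your recursive condition is superfluous, since a jump of size $a_m$ survives the addition of any continuous function. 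The condition $a_n\|w(\tau_n-\tau_m,\cdot)\|_{L^\infty}\le 2^{-n}$ for $m<n$ already gives the needed uniform convergence of the tails. The requirement $a_n\le 2^{-n}a_{n-1}$ serves only to make $\sum_n a_n$ finite, not, as your closing remark suggests, to guarantee that uniform convergence.
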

\begin{proof}
{\itshape Definition of the dense set.} We define $D$ as the sequence of dyadic rational times in $(0,1)$.
To order them, we consider the $n$-th generation of dyadic numbers in $(0,1)$
\[
D_n := \Big\{ \frac{j}{2^n} : j = 1, \ldots, 2^n - 1 \Big\} \, , \quad n \in \N \, , \ n \geq 1 \, , \quad D_0 := \emptyset \, .
\]
Then we set
\[
D := \bigcup_{n=1}^{+\infty} D_n \, ,
\]
which is a dense subset of $(0,1)$.

{\itshape Definition of the function $u$.} We define the function $u$ as follows.
As done in Theorem~\ref{thm:nucleation}, for all $s \in [0,1]$, we consider the solution that nucleates a jump discontinuity at time $t_1 = s$ given by $w(s - t,x)$, where $w$ is the solution provided by Theorem~\ref{thm:dissolution} extended to negative times as in~\eqref{eq:extension_negative_times}.
The function $w(s-t,\cdot)$ is discontinuous in $x$ only at time $s$.
We set
\[
u(t,x) := \sum_{n=1}^{+\infty} a_n  \sum_{s \in D_n \sm D_{n-1}} w(s-t,x) \, ,
\]
where
\[ \label{eq:cn}
a_n := 2^{-3n} \, , \quad n \in \N \, , \ n \geq 1 \, .
\]
This series is intended as the limit of the sequence of partial sums
\[ \label{eq:partial_sums}
u^N(t,x) := \sum_{n=1}^{N} a_n  \sum_{s \in D_n \sm D_{n-1}} w(s-t,x) \, ,
\]
as $N \to +\infty$.
We observe that $u$ is well-defined. Indeed, for $N \geq M \geq 1$, by Theorem~\ref{thm:energy_conservation}, and by~\eqref{eq:cn}, we have that
\[
\begin{split}
    [ u^N(t,\cdot) - u^M(t,\cdot)]_{\W^\alpha_\delta(\T^1)} & = \Big[ \sum_{n=M+1}^{N} a_n  \sum_{s \in D_n \sm D_{n-1}} w(s-t,\cdot) \Big]_{\W^\alpha_\delta(\T^1)} \\
                                                    & \leq \sum_{n=M+1}^{N} a_n  \sum_{s \in D_n \sm D_{n-1}} [ w(s-t,\cdot) ]_{\W^\alpha_\delta(\T^1)} \\
                                                    & \leq C \sum_{n=M+1}^{N} a_n 2^{n-1} [ u_1 ]_{\W^\alpha_\delta(\T^1)} \leq C [ u_1 ]_{\W^\alpha_\delta(\T^1)} \sum_{n=M+1}^{N} 2^{-2n} \\
    & \leq C 2^{-2M} \, .
\end{split}
\]
Since $\int_{\T^1}u^N(t,x) \d x = 0$, by the Poincaré inequality and by Proposition~\ref{prop:W_equiv_H}, we deduce that $u^N(t,\cdot)$ is a Cauchy sequence in $H^\alpha(\T^1)$, uniformly with respect to $t \in [0,1]$, thus defining $u \in C([0,1];H^\alpha(\T^1))$ as the limit of $u^N$ as $N \to +\infty$.
We observe that $u \in C^1([0,1];L^2(\T^1))$ as well.
Indeed, we have that
\[
\begin{split}
    \| \de_t u^N(t,\cdot) - \de_t u^M(t,\cdot)\|_{L^2(\T^1)} & = \Big\| \sum_{n=M+1}^{N} a_n  \sum_{s \in D_n \sm D_{n-1}} \de_t w(s-t,\cdot) \Big\|_{L^2(\T^1)} \\
    & \leq \sum_{n=M+1}^{N} a_n  \sum_{s \in D_n \sm D_{n-1}} \| \de_t w(s-t,\cdot) \|_{L^2(\T^1)} \\
    & \leq C \sum_{n=M+1}^{N} a_n 2^{n-1} \| u_1 \|_{H^\alpha(\T^1)} \leq C \| u_1 \|_{H^\alpha(\T^1)} \sum_{n=M+1}^{N} 2^{-2n} \\
    & \leq C 2^{-2M} \, ,
\end{split}
\]
showing that $\de_t u^N(t,\cdot)$ is a Cauchy sequence in $L^2(\T^1)$, uniformly with respect to $t \in [0,1]$, thus defining a limit $v \in C([0,1];L^2(\T^1))$ as $N \to +\infty$.
By writing $u^N$ as the Bochner integral in $L^2(\T^1)$,
\[
u^N(t,\cdot) = u^N(0,\cdot) + \int_0^t \de_t u^N(\tau,\cdot) \, \d \tau \, ,
\]
we can pass to the limit as $N \to +\infty$ and we obtain that $u$ is differentiable in $L^2(\T^1)$ with $\de_t u(t,\cdot) = v(t,\cdot)$.

{\itshape Proof that $u$ is a solution.} First of all, we observe that $u^N$ defined in~\eqref{eq:partial_sums} are solutions to the peridynamics equation  by linearity.
Their convergence to $u$ as $N \to +\infty$ is enough to pass to the limit in the weak formulation of the peridynamics equation
\[
\begin{split}
& \int_0^{1} \int_{\T^1} u^N(t,x) \de_{tt} \varphi(t,x) \, \d x \, \d t - \int_0^{1} \int_{\T^1} u^N(t,x) K[\varphi(t,\cdot)](x) \, \d x \, \d t\\
& \quad  = - \int_{\T^1} u^N(0,x) \de_t \varphi(0,x) \, \d x + \int_{\T^1} \partial_t u^N(0,x) \varphi(0,x) \, \d x \, ,
\end{split}
\]
for every $\varphi \in C_c^\infty([0,1) \times \T^1)$.
By Theorem~\ref{thm:existence} there exists a global solution defined for all times $t \in [0,+\infty)$ with initial data $u(0,\cdot)$ and $\partial_t u(0,\cdot)$. 
By Theorem~\ref{thm:uniqueness}, we conclude that it must coincide with $u(t,\cdot)$ for $t \in [0,1]$.

{\itshape Proof of discontinuity.} Let us fix $\bar s \in D$ and let us show that $u(\bar s,\cdot)$ is a discontinuous function.
We let $\bar n \in \N$, $\bar n \geq 1$, be such that $\bar s \in D_{\bar n} \sm D_{\bar n-1}$.
We write
\[
\begin{split}
    u(\bar s,x) & = \sum_{n=1}^{\bar n-1} a_n  \sum_{s \in D_n \sm D_{n-1}} w(s-\bar s,x) \\
    & \quad + a_{\bar n} w(0,x) + a_{\bar n} \sum_{\substack{s \in D_{\bar n} \sm D_{\bar n-1} \\ s \neq \bar s}}  w(s-\bar s,x) \\
    & \quad + \sum_{n=\bar n+1}^{+\infty} a_n  \sum_{s \in D_n \sm D_{n-1}} w(s-\bar s,x) \, .
\end{split}
\]
The function $w(0, \cdot)$ is discontinuous by construction.
The functions
\[
\sum_{n=1}^{\bar n-1} a_n  \sum_{s \in D_n \sm D_{n-1}} w(s-\bar s,x) \quad \text{and} \quad a_{\bar n} \sum_{\substack{s \in D_{\bar n} \sm D_{\bar n-1} \\ s \neq \bar s}}  w(s-\bar s,x) \, ,
\]
are finite sums of continuous functions, thus they are continuous.
The only nontrivial part is to show that the series
\[ \label{eq:series_from_n_bar}
\sum_{n=\bar n+1}^{+\infty} a_n  \sum_{s \in D_n \sm D_{n-1}} w(s-\bar s,x)
\]
converges uniformly in $x$, thus defining a continuous function.
To this end, we observe that, by construction and by Theorem~\ref{thm:dissolution}, we have that
\[
\|w(s-\bar s,\cdot)\|_{L^\infty(\T^1)} \leq C |s - \bar s|^{-\frac{1}{2}} \, .
\]
However, for $n > \bar n$, we have that $s \neq \bar s$ and thus $|s - \bar s| \geq 2^{-n}$.
This yields
\[
\|w(s-\bar s,\cdot)\|_{L^\infty(\T^1)} \leq C 2^{\frac{n}{2}} \, , \quad \text{for every } s \in D_n \sm D_{n-1} \, , \ n > \bar n \, .
\]
There are $2^{n-1}$ elements in $D_n \sm D_{n-1}$, hence by the choice of $a_n$ in~\eqref{eq:cn}, we estimate
\[
    \Big\| a_n \sum_{s \in D_n \sm D_{n-1}} w(s-\bar s,\cdot) \Big\|_{L^\infty(\T^1)} \leq C a_n 2^{n-1} 2^{\frac{n}{2}} = C a_n 2^{\frac{3n}{2} - 1} \leq C 2^{-\frac{3n}{2}} \, ,
\]
for $n > \bar n$.
This shows that the series in~\eqref{eq:series_from_n_bar} converges uniformly in $x$, concluding the proof that $u(\bar s,\cdot)$ is discontinuous.

{\itshape Proof of continuity at $t=0$}. Finally, let us show that $u(0,\cdot)$ is a continuous function.
By Theorem~\ref{thm:dissolution}, we have that 
\begin{equation*}
    \|w(s,\cdot)\|_{L^\infty(\T^1)} \leq C |s|^{-\frac{1}{2}} \, .
\end{equation*}
Given $n \geq 1$ and $s \in D_n \setminus D_{n-1}$, we have that $s \geq 2^{-n}$, yielding 
\begin{equation*}
    \| w(s,\cdot) \|_{L^\infty(\T^1)} \leq C 2^{\frac{n}{2}} \, .
\end{equation*}
Consequently,
\begin{equation*}
    \Big\| a_n \sum_{s \in D_n \setminus D_{n-1}} w(s,\cdot) \Big\|_{L^\infty(\T^1)} \leq C 2^{-\frac{3n}{2}} \, , 
\end{equation*}
implying uniform convergence of the series.
\end{proof}

\appendix

\section{Proof of Lemma~\ref{lem:holder_besov}} \label{app:holder_besov}

We provide here the proof of Lemma~\ref{lem:holder_besov}.

\begin{proof}
The arguments in the proof are standard, see, \eg, \cite[Theorem~2.36]{BahCheDan13}.
We provide here the details for the reader's convenience.

\emph{Step 1}: Let us fix $u \in C^{0,s}(\T^1)$ and let us prove that $u \in B^s_{\infty,\infty}(\T^1)$ and
\begin{equation} \label{eq:besov_less_holder}
\| u \|_{B^s_{\infty,\infty}(\T^1)} \leq C \| u \|_{C^{0,s}(\T^1)} \, .
\end{equation}

We start by observing that
\begin{equation} \label{eq:kernel_representation}
  \begin{split}
      P_j u(x) & = \sum_{k \in \Z} \psi_j(|k|) \hat{u}_k e^{i 2 \pi k x} \\
               & = \int_{\T^1} u(y) \Big( \sum_{k \in \Z} \psi_j(|k|) e^{i 2 \pi k (x-y)} \Big) \d y = \int_{\T^1} u(y) \Psi_j(x-y) \d y \, ,
  \end{split}
\end{equation}
where
\begin{equation*}
    \Psi_j(z) := \sum_{k \in \Z} \psi_j(|k|) e^{i 2 \pi k z} \, .
\end{equation*}
Note that $\psi_j(0) = 0$ for $j \geq 0$, hence
\begin{equation*}
    \int_{\T^1} \Psi_j(z) \d z = 0  \quad \text{for } j \geq 0 \, .
\end{equation*}
This implies that
\begin{equation} \label{eq:bound_on_dyadic_block}
  \begin{split}
      | P_j u(x) | & = \Big| \int_{\T^1} u(y) \Psi_j( x - y ) \, \d y \Big| = \Big| \int_{\T^1} (u(y) - u(x)) \Psi_j( x - y ) \, \d y \Big| \\
                   & \leq \int_{\T^1} | u(y) - u(x) | | \Psi_j( x - y ) | \, \d y =  \int_{-\frac{1}{2}}^{\frac{1}{2}} | u(x-z) - u(x) | | \Psi_j( z ) | \, \d y \\
                   & \leq [u]_{C^{0,s}(\T^1)} \int_{-\frac{1}{2}}^{\frac{1}{2}} | z |^s | \Psi_j( z ) | \, \d z  \, .
  \end{split}
\end{equation}
To estimate the last term, we start by deducing a pointwise bound for $\Psi_j$.
On the one hand, we use the fact that $\psi_j$ is supported in $(2^{j-1}, 2^{j+1})$ to get that
\begin{equation*}
  \begin{split}
      (1-e^{-i 2 \pi z})^2 \sum_{k=0}^{+\infty} \psi_j(k) e^{i 2 \pi k z} & = \sum_{k=0}^{+\infty} \psi_j(k) e^{i 2 \pi k z} - 2 \sum_{k=0}^{+\infty} \psi_j(k) e^{i 2 \pi (k-1) z} + \sum_{k=0}^{+\infty} \psi_j(k) e^{i 2 \pi (k-2)z} \\
                                                                        & = \sum_{k=0}^{+\infty} \psi_j(k) e^{i 2 \pi k z} - 2 \sum_{k=-1}^{+\infty} \psi_j(k+1) e^{i 2 \pi k z} + \sum_{k=-2}^{+\infty} \psi_j(k+2) e^{i 2 \pi k z} \\
                                                                        & = \sum_{k=2^{j-1}-2}^{2^{j+1}} (\psi_j(k+2) - 2 \psi_j(k+1) + \psi_j(k)) e^{i 2 \pi k z} \, ,
\end{split}
\end{equation*}
for $j \geq 2$.
Reasoning analogously for negative $k$'s, we get that, for $j \geq 2$,
\begin{equation*}
    \Psi_j(z) = \frac{1}{(1-e^{-i 2 \pi z})^2} \sum_{2^{j-1}-2 \leq |k| \leq 2^{j+1}+2} (\psi_j(|k+2|) - 2 \psi_j(|k+1|) + \psi_j(|k|)) e^{i 2 \pi k z} \, .
\end{equation*}
Exploiting that
\begin{equation*}
    |\psi_j(|k+2|) - 2 \psi_j(|k+1|) + \psi_j(|k|)| \leq 2 \mathrm{Lip}(\psi_j') = 2 \cdot 2^{-2j} \mathrm{Lip}(\psi_0') \leq C 2^{-2j} \, ,
\end{equation*}
and that
\begin{equation} \label{eq:bound_on_denominator}
    |1 - e^{-i 2 \pi z}| = 2 |\sin(\pi z)| \geq 4 |z| \quad \text{for } |z| \leq \frac{1}{2} \, ,
\end{equation}
we deduce that
\begin{equation*}
    |\Psi_j(z)| \leq C 2^{-j} |z|^{-2} \, , \quad \text{for } j \geq 2 \text{ and } |z| \leq \frac{1}{2} \, .
\end{equation*}
On the other hand, we have the trivial bound
\begin{equation*}
    |\Psi_j(z)| \leq C 2^{j} \, , \quad \text{for } j \geq 0 \text{ and } |z| \leq \frac{1}{2} \, .
\end{equation*}
The previous bounds allow us to estimate:
\begin{equation*}
  \begin{split}
      \int_{\T^1} | z |^s | \Psi_j( z ) | \, \d z & = \int_{ \{ |z| \leq 2^{-j} \} } | z |^s | \Psi_j( z ) | \, \d z + \int_{ \{ 2^{-j} < |z| \leq \frac{1}{2} \} } | z |^s | \Psi_j( z ) | \, \d z \\
                                                  & \leq \int_{ \{ |z| \leq 2^{-j} \} } 2^{-js} 2^j \, \d z + C \int_{\{ 2^{-j} < |z| \leq \frac{1}{2} \}} |z|^{s} 2^{-j} |z|^{-2} \, \d z \\
                                                  & \leq 2^{-js} + C 2^{-j} ( 2^{j-js} - 2^{1-s} ) \leq C 2^{-js} \, ,
  \end{split}
\end{equation*}
for $j \geq 2$.
Inserting this estimate into~\eqref{eq:bound_on_dyadic_block}, we get that
\begin{equation*}
    \sup_{j \geq 2} 2^{s j} \| P_j u \|_{L^\infty(\T^1)} \leq C [u]_{C^{0,s}(\T^1)} \, .
\end{equation*}
We estimate the dyadic blocks $P_{-1} u$, $P_0 u$, and $P_1 u$ with trivial bounds.
Putting all the previous estimates together, we have shown~\eqref{eq:besov_less_holder}.

\emph{Step 2}: Let us fix $u \in B^s_{\infty,\infty}(\T^1)$. 
First of all, we observe that, by definition of the Besov norm, 
\begin{equation*}
    \sum_{j \geq -1} \| P_j u \|_{L^\infty(\T^1)} \leq \sum_{j \geq -1} 2^{-sj} \| u \|_{B^s_{\infty,\infty}(\T^1)} \leq C  \| u \|_{B^s_{\infty,\infty}(\T^1)} \, , 
\end{equation*}
hence the series in~\eqref{eq:LP_decomposition} converges uniformly to the continuous representative of $u$ (that we do not relabel). 

Let us now prove that
\begin{equation*}
    \| u \|_{C^{0,s}(\T^1)} \leq C \| u \|_{B^s_{\infty,\infty}(\T^1)} \, .
\end{equation*}

We start by estimating, by~\eqref{eq:LP_decomposition} (noticing that the averages cancel out),
\begin{equation} \label{eq:splitting_holder_estimate}
    | u(x) - u(y) | \leq \sum_{ j \geq 0 } | P_j u(x) - P_j u(y) | = \sum_{ 0 \leq j \leq \hat j} | P_j u(x) - P_j u(y) | + \sum_{ j > \hat j } | P_j u(x) - P_j u(y) | \, ,
\end{equation}
where $\hat j$ will be chosen later.
We have
\begin{equation} \label{eq:holder_estimate_on_dyadic_block}
    | P_j u(x) - P_j u(y) | \leq \| (P_j u)' \|_{L^\infty(\T^1)} |x-y| \, ,
\end{equation}
so the goal is to deduce a bound on $\| (P_j u)' \|_{L^\infty(\T^1)}$.
We claim that
\begin{equation} \label{eq:bernstein_inequality}
    \| (P_j u)' \|_{L^\infty(\T^1)} \leq C 2^{j} \| P_j u \|_{L^\infty(\T^1)} \, , \quad \text{for every } j \geq 0 \, .
\end{equation}
Let us prove the previous inequality.
By~\eqref{eq:kernel_representation}, for every $j \geq 0$,
\begin{equation*}
    (P_j u)'(x) =  \sum_{k \in \Z} i 2 \pi k \psi_j(|k|) \hat u_k e^{i 2 \pi k x} \, .
\end{equation*}
We introduce an auxiliary bump function $\tilde \psi_j$ which equals 1 on the support of $\psi_j$, see Figure~\ref{fig:auxiliary_bump}.
We provide a more precise definition of $\tilde \psi_j$ since we need a control on its derivative later in the proof.
We let $\tilde \psi_0 \in C^\infty_c(\R)$ be such that $\tilde \psi_0(\xi) = 1$ for $\frac{1}{2} \leq |\xi| \leq 2$ and $\tilde \psi_0(\xi) = 0$ for $|\xi| \leq \frac{1}{4}$ or $|\xi| \geq 4$.
We set $\tilde \psi_j(\xi) := \tilde \psi_0(2^{-j} \xi)$ and we observe that $\tilde \psi_j(|\xi|) = 1$ for $2^{j-1} \leq |\xi| \leq 2^{j+1}$, \ie, on the support of~$\psi_j$, and $\tilde \psi_j(|\xi|) = 0$ for $|\xi| \leq 2^{j-2}$ or $|\xi| \geq 2^{j+2}$.

\begin{figure}[ht]
    \begin{tikzpicture}[xscale=1.5]
        % picture for j = 1
        \pgfmathsetmacro{\j}{1}
        \draw ({2^(\j-2) - 0.1},0) -- ({2^(\j+2) + 0.1},0);

        \draw ({2^(\j-2)}, -0.1) -- ({2^(\j-2)}, 0.1);
        \draw ({2^(\j-2)}, -0.1) node[anchor=north] {$2^{j-2}$};

        \draw ({2^(\j-1)}, -0.1) -- ({2^(\j-1)}, 0.1);
        \draw ({2^(\j-1)}, -0.1) node[anchor=north] {$2^{j-1}$};

        \draw ({2^(\j)}, -0.1) -- ({2^(\j)}, 0.1);
        \draw ({2^(\j)}, -0.1) node[anchor=north] {$2^{j}$};

        \draw ({2^(\j+1)}, -0.1) -- ({2^(\j+1)}, 0.1);
        \draw ({2^(\j+1)}, -0.1) node[anchor=north] {$2^{j+1}$};

        \draw ({2^(\j+2)}, -0.1) -- ({2^(\j+2)}, 0.1);
        \draw ({2^(\j+2)}, -0.1) node[anchor=north] {$2^{j+2}$};

        \draw[thick, domain={2^(\j-1)}:{2^(\j)}, smooth, variable=\x] plot (\x, {1 - bump(2^(-\j) * \x)});
        \draw[thick, domain={2^(\j)}:{2^(\j+1)}, smooth, variable=\x] plot (\x, {bump(2^(-\j-1) * \x)});
        \draw ({2^(\j)}, {1-0.1}) node[anchor=north] {$\psi_j$};

        \draw[thick, dashed, domain={2^(\j-2)}:{2^(\j-1)}, smooth, variable=\x] plot (\x, {1 - bump(2^(-\j+1) * \x)});
        \draw[thick, dashed, domain={2^(\j-1)}:{2^(\j+1)}, smooth, variable=\x] plot (\x, 1);
        \draw[thick, dashed, domain={2^(\j+1)}:{2^(\j+2)}, smooth, variable=\x] plot (\x, {bump(2^(-\j-2) * \x)});
        \draw ({2^(\j+1)}, {1+0.1}) node[anchor=south] {$\tilde \psi_j$};
    \end{tikzpicture}

    \caption{The dashed line represents the graph of the auxiliary bump function $\tilde \psi_j$, while the solid line represents the graph of $\psi_j$.}
    \label{fig:auxiliary_bump}
\end{figure}

Then, we can write
\begin{equation} \label{eq:derivative_as_convolution}
    (P_j u)'(x) =  \sum_{k \in \Z} i 2 \pi k \tilde \psi_j(|k|) \psi_j(|k|) \hat u_k e^{i 2 \pi k x} = M_j * (P_j u)(x) \, ,
\end{equation}
where $M_j$ is the kernel defined by
\begin{equation*}
    M_j(z) := \sum_{k \in \Z} i 2 \pi k \tilde \psi_j(|k|) e^{i 2 \pi k z} \, .
\end{equation*}
We estimate $M_j$ as follows with arguments similar to the ones used to estimate $\Psi_j$ in Step~1. Specifically, for $j \geq 3$, we have that
\begin{equation*}
  \begin{split}
& ( 1 - e^{- i 2 \pi z})^2 \sum_{k = 0}^{+\infty} i 2 \pi k \tilde \psi_j(k) e^{i 2 \pi k z} \\
& \quad = \sum_{k=0}^{+\infty} i 2 \pi k \tilde \psi_j(k) e^{i 2 \pi k z} - 2 \sum_{k=0}^{+\infty} i 2 \pi k \tilde \psi_j(k) e^{i 2 \pi (k-1) z} + \sum_{k=0}^{+\infty} i 2 \pi k \tilde \psi_j(k) e^{i 2 \pi (k-2) z} \\
& \quad = \sum_{ k = 2^{j-2} - 2 }^{2^{j+2}} i 2 \pi \big( (k+2) \tilde \psi_j(k+2) - 2 (k+1) \tilde \psi_j(k+1) + k \tilde \psi_j(k) \big) e^{i 2 \pi k z}
  \end{split}
\end{equation*}
Then we bound, for $2^{j-2} - 2 \leq k \leq 2^{j+2}$,
\begin{equation*}
  \begin{split}
      & |(k+2) \tilde \psi_j(k+2) - 2 (k+1) \tilde \psi_j(k+1) + k \tilde \psi_j(k)| \\
      & \quad \leq 2 \sup_h | 2 \tilde \psi_j'(h) + h \tilde \psi_j''(h) | \leq C \Big( \sup_h | 2^{-j} \tilde \psi_0'(2^{-j} h) | + 2^j \sup_h | 2^{-2j} \tilde \psi_0''(2^{-j} h) | \Big) \leq C 2^{-j} \, .
  \end{split}
\end{equation*}
Reasoning analogously for negative $k$'s, using~\eqref{eq:bound_on_denominator}, we get that, for $j \geq 3$,
\begin{equation*}
    | M_j(z) | \leq \frac{1}{|1 - e^{- i 2 \pi z}|^2} \sum_{2^{j-2} - 2 \leq |k| \leq 2^{j+2} + 2} 2 \pi | (k+2) \tilde \psi_j(|k+2|) - 2 (k+1) \tilde \psi_j(|k+1|) + k \tilde \psi_j(|k|) | \leq \frac{C}{|z|^{2}} \, ,
\end{equation*}
for $0 < |z| \leq \frac{1}{2}$.
Moreover, we have the trivial bound
\begin{equation*}
    |M_j(z)| \leq C 2^{2j} \, , \quad \text{for } j \geq 0 \text{ and } |z| \leq \frac{1}{2} \, .
\end{equation*}
It follows that, for $j \geq 3$,
\begin{equation*}
  \begin{split}
      \| M_j \|_{L^1(\T^1)} & = \int_{\T^1} | M_j(z) | \d z \leq C \int_{\T^1} \min\Big\{ 2^{2j}, \frac{1}{|z|^2} \Big\} \, \d z \\
                            & = C \int_{\{ |z| \leq 2^{-j} \}} 2^{2j} \d z + C \int_{\{ 2^{-j} < |z| \leq \frac{1}{2} \}} \frac{1}{|z|^2} \d z \leq C 2^{j} \, .
  \end{split}
\end{equation*}
Inserting this estimate into~\eqref{eq:derivative_as_convolution}, we get that, for $j \geq 0$ (bounding with a constant in the cases $j = 0,1,2$),
\begin{equation*}
    \| (P_j u)' \|_{L^\infty(\T^1)} \leq \| M_j \|_{L^1(\T^1)} \| P_j u \|_{L^\infty(\T^1)} \leq C 2^{j} \| P_j u \|_{L^\infty(\T^1)} \, ,
\end{equation*}
showing~\eqref{eq:bernstein_inequality}.

On the one hand, by~\eqref{eq:holder_estimate_on_dyadic_block} and the previous inequality, we deduce that
\begin{equation} \label{eq:holder_estimate_on_low_frequencies}
  \begin{split}
      | P_j u(x) - P_j u(y) | & \leq C 2^{j} \| P_j u \|_{L^\infty(\T^1)} |x-y| = C 2^{j-js} 2^{js} \| P_j u \|_{L^\infty(\T^1)} |x-y| \\
                              & \leq C 2^{j(1-s)} \|u\|_{B^s_{\infty,\infty}(\T^1)} |x-y| \, .
  \end{split}
\end{equation}
On the other hand, we also have the bound
\begin{equation} \label{eq:holder_estimate_on_high_frequencies}
    | P_j u(x) - P_j u(y) | \leq 2 \| P_j u \|_{L^\infty(\T^1)} \leq C 2^{-js} \| u \|_{B^s_{\infty,\infty}(\T^1)} \, .
\end{equation}

By periodicity, it suffices to consider $x, y \in \R$ with $0 < |x - y| \leq \frac{1}{2}$. 
We choose $\hat j = \lfloor - \log_2 | x - y | \rfloor$ in~\eqref{eq:splitting_holder_estimate} so that, using~\eqref{eq:holder_estimate_on_low_frequencies} for $j \leq \hat j$ and~\eqref{eq:holder_estimate_on_high_frequencies} for $j > \hat j$, we get that
\begin{equation*}
  \begin{split}
      | u(x) - u(y) | & \leq C\|u\|_{B^s_{\infty,\infty}(\T^1)} \Big( \sum_{ 0 \leq j \leq \hat j} 2^{j(1-s)}  |x-y| + \sum_{ j > \hat j } 2^{-js} \Big) \\
                      & \leq C \| u \|_{B^s_{\infty,\infty}(\T^1)} \Big( \frac{2^{(1-s)(\hat j + 1)} - 1}{2^{(1-s)}-1} | x - y | + \frac{2^{-s(\hat j + 1)}}{1 - 2^{-s}} \Big) \\
                      & \leq C \| u \|_{B^s_{\infty,\infty}(\T^1)} |x-y|^{s} \, ,
  \end{split}
\end{equation*}
where we have used that $2^{\hat j} |x-y| \leq 1 < 2^{\hat j + 1} |x - y|$.

We have shown that
\begin{equation*}
    [u]_{C^{0,s}(\T^1)} \leq C \| u \|_{B^s_{\infty,\infty}(\T^1)} \, .
\end{equation*}
It remains to estimate the $L^\infty$ norm of $u$.
For this, we observe that
\begin{equation*}
    | u(x) - \hat u_0 | = \Big| \int_{\T^1} (u(x) - u(y)) \d y \Big| \leq [u]_{C^{0,s}(\T^1)} \int_{-\frac{1}{2}}^{\frac{1}{2}} |z|^s \d z \leq C [u]_{C^{0,s}(\T^1)} \, ,
\end{equation*}
and
\begin{equation*}
    |\hat u_0| = |P_{-1} u| \leq \| P_{-1} u \|_{L^\infty(\T^1)} \leq C \| u \|_{B^s_{\infty,\infty}(\T^1)} \, .
\end{equation*}
This concludes the proof.
\end{proof}

\subsection*{Acknowledgements}
   G.\ M.\ Coclite, F.\ Maddalena, and G.\ Orlando are members of Gruppo Nazionale per l'Analisi Matematica, la Probabilit\`a e le loro Applicazioni (GNAMPA) of the Istituto Nazionale di Alta Matematica (INdAM).

G.\ M.\ Coclite, F.\ Maddalena, and G.\ Orlando  were partially supported by the Italian Ministry of University and Research under the Programme ``Department of Excellence'' Legge 232/2016 (Grant No. CUP - D93C23000100001).

   S.\ Dipierro acknowledges financial support
by the Australian Research Council Future Fellowship FT230100333 New
perspectives on nonlocal equations.

E.\ Valdinoci acknowledges financial support
by the Australian Laureate Fellowship FL190100081 Minimal surfaces,
free boundaries and partial differential equations.

\printbibliography

\end{document}